\DeclareMathAlphabet{\mathpzc}{OT1}{pzc}{m}{it}
\begin{document}

\renewcommand*{\bibfont}{\small}
\newtheorem{proposition}{Proposition}[section]
\newtheorem{theorem}[proposition]{Theorem}
\newtheorem{corollary}[proposition]{Corollary}
\newtheorem{lemma}[proposition]{Lemma}
\newtheorem{conjecture}[proposition]{Conjecture}
\newtheorem{question}[proposition]{Question}
\newtheorem{definition}[proposition]{Definition}
\newtheorem{comment}[proposition]{Comment}
	\newtheorem{example}[proposition]{Example}
\newtheorem{algorithm}[proposition]{Algorithm}
\newtheorem{assumption}[proposition]{Assumption}
\newtheorem{condition}[proposition]{Condition}
\numberwithin{equation}{section}
\numberwithin{proposition}{section}

\theoremstyle{remark}
\newtheorem{remark}{Remark}[section]

\captionsetup[table]{format=plain,labelformat=simple,labelsep=period}

\newcommand{\skp}{\vspace{\baselineskip}}
\newcommand{\noi}{\noindent}
\newcommand{\osc}{\mbox{osc}}
\newcommand{\lfl}{\lfloor}
\newcommand{\rfl}{\rfloor}

\newcommand{\leb}{\mbox{Leb}}
\newcommand{\veps}{\varepsilon}
\newcommand{\img}{\imath}
\newcommand{\iy}{\infty}
\newcommand{\eps}{\varepsilon}
\newcommand{\del}{\delta}
\newcommand{\Rk}{\mathbb{R}^k}
\newcommand{\RR}{\mathbb{R}}
\newcommand{\spa}{\vspace{.2in}}
\newcommand{\V}{\mathcal{V}}
\newcommand{\E}{\mathbb{E}}
\newcommand{\I}{\mathbb{I}}
\newcommand{\PP}{\mathbb{P}}
\newcommand{\sgn}{\mbox{sgn}}
\newcommand{\ti}{\tilde}
\newcommand{\GG}{H}
\newcommand{\GGnew}{A}

\newcommand{\QQ}{\mathbb{Q}}

\newcommand{\XX}{\mathbb{X}}
\newcommand{\XXz}{\mathbb{X}^0}

\newcommand{\lan}{\langle}
\newcommand{\ran}{\rangle}
\newcommand{\llan}{\llangle}
\newcommand{\rran}{\rrangle}
\newcommand{\lf}{\lfloor}
\newcommand{\rf}{\rfloor}
\def\wh{\widehat}
\newcommand{\defn}{\stackrel{def}{=}}
\newcommand{\txb}{\tau^{\eps,x}_{B^c}}
\newcommand{\tyb}{\tau^{\eps,y}_{B^c}}
\newcommand{\tilxb}{\tilde{\tau}^\eps_1}
\newcommand{\pxeps}{\mathbb{P}_x^{\eps}}
\newcommand{\non}{\nonumber}
\newcommand{\dist}{\mbox{dist}}

\newcommand{\Om}{\mathnormal{\Omega}}
\newcommand{\om}{\omega}
\newcommand{\vph}{\varphi}
\newcommand{\Del}{\mathnormal{\Delta}}
\newcommand{\Gam}{\mathnormal{\Gamma}}
\newcommand{\Sig}{\mathnormal{\Sigma}}

\newcommand{\tilyb}{\tilde{\tau}^\eps_2}
\newcommand{\beq}{\begin{eqnarray*}}
\newcommand{\eeq}{\end{eqnarray*}}
\newcommand{\beqn}{\begin{eqnarray}}
\newcommand{\eeqn}{\end{eqnarray}}
\newcommand{\ink}{\rule{.5\baselineskip}{.55\baselineskip}}

\newcommand{\bt}{\begin{theorem}}
\newcommand{\et}{\end{theorem}}
\newcommand{\deps}{\Del_{\eps}}
\newcommand{\dbl}{\mathbf{d}_{\tiny{\mbox{BL}}}}

\newcommand{\be}{\begin{equation}}
\newcommand{\ee}{\end{equation}}
\newcommand{\ac}{\mbox{AC}}
\newcommand{\hs}{\tiny{\mbox{HS}}}
\newcommand{\BB}{\mathbb{B}}
\newcommand{\VV}{\mathbb{V}}
\newcommand{\DD}{\mathbb{D}}
\newcommand{\KK}{\mathbb{K}}
\newcommand{\HH}{\mathbb{H}}
\newcommand{\TT}{\mathbb{T}}
\newcommand{\CC}{\mathbb{C}}
\newcommand{\ZZ}{\mathbb{Z}}
\newcommand{\SSS}{\mathbb{S}}
\newcommand{\EE}{\mathbb{E}}
\newcommand{\NN}{\mathbb{N}}
\newcommand{\MM}{\mathbb{M}}

\newcommand{\clg}{\mathcal{G}}
\newcommand{\clb}{\mathcal{B}}
\newcommand{\cls}{\mathcal{S}}
\newcommand{\clc}{\mathcal{C}}
\newcommand{\clj}{\mathcal{J}}
\newcommand{\clm}{\mathcal{M}}
\newcommand{\clx}{\mathcal{X}}
\newcommand{\cld}{\mathcal{D}}
\newcommand{\cle}{\mathcal{E}}
\newcommand{\clv}{\mathcal{V}}
\newcommand{\clu}{\mathcal{U}}
\newcommand{\clr}{\mathcal{R}}
\newcommand{\clt}{\mathcal{T}}
\newcommand{\cll}{\mathcal{L}}
\newcommand{\clz}{\mathcal{Z}}
\newcommand{\clq}{\mathcal{Q}}
\newcommand{\clo}{\mathcal{O}}

\newcommand{\cli}{\mathcal{I}}
\newcommand{\clp}{\mathcal{P}}
\newcommand{\cla}{\mathcal{A}}
\newcommand{\clf}{\mathcal{F}}
\newcommand{\clh}{\mathcal{H}}
\newcommand{\N}{\mathbb{N}}
\newcommand{\Q}{\mathbb{Q}}
\newcommand{\bfx}{{\boldsymbol{x}}}
\newcommand{\bfa}{{\boldsymbol{a}}}
\newcommand{\bfh}{{\boldsymbol{h}}}
\newcommand{\bfs}{{\boldsymbol{s}}}
\newcommand{\bfm}{{\boldsymbol{m}}}
\newcommand{\bff}{{\boldsymbol{f}}}
\newcommand{\bfb}{{\boldsymbol{b}}}
\newcommand{\bfw}{{\boldsymbol{w}}}
\newcommand{\bfz}{{\boldsymbol{z}}}
\newcommand{\bfu}{{\boldsymbol{u}}}
\newcommand{\bfell}{{\boldsymbol{\ell}}}
\newcommand{\bfn}{{\boldsymbol{n}}}
\newcommand{\bfd}{{\boldsymbol{d}}}
\newcommand{\bfbeta}{{\boldsymbol{\beta}}}
\newcommand{\bfzeta}{{\boldsymbol{\zeta}}}
\newcommand{\bfnu}{{\boldsymbol{\nu}}}
\newcommand{\bfvarphi}{{\boldsymbol{\varphi}}}

\newcommand{\curvz}{{\bf \mathpzc{z}}}
\newcommand{\curvx}{{\bf \mathpzc{x}}}
\newcommand{\curvi}{{\bf \mathpzc{i}}}
\newcommand{\curvs}{{\bf \mathpzc{s}}}
\newcommand{\blip}{\mathbb{B}_1}
\newcommand{\loc}{\text{loc}}

\newcommand{\BM}{\mbox{BM}}

\newcommand{\tac}{\mbox{\scriptsize{AC}}}

\newcommand{\Erdos}{Erd\H{o}s-R\'enyi }
\newcommand{\Meleard}{M\'el\'eard }
\newcommand{\Frechet}{Fr\'echet }
\def\blue{\textcolor{blue}}
\def\red{\textcolor{red}}
\newcommand{\set}[1]{\left\{#1\right\}}
\newcommand{\half}{{\frac{1}{2}}}
\newcommand{\quarter}{{\frac{1}{4}}}
\newcommand{\Rd}{{\Rmb^d}}
\newcommand{\intR}{\int_\Rmb}
\newcommand{\intRd}{\int_\Rd}
\definecolor{amet}{rgb}{0.8, 0.2, 0.8}
\newcommand{\cg}{\textcolor{amet}}

\newcommand{\Amb}{{\mathbb{A}}}
\newcommand{\Bmb}{{\mathbb{B}}}
\newcommand{\Cmb}{{\mathbb{C}}}
\newcommand{\Dmb}{{\mathbb{D}}}
\newcommand{\Emb}{{\mathbb{E}}}
\newcommand{\Fmb}{{\mathbb{F}}}
\newcommand{\Gmb}{{\mathbb{G}}}
\newcommand{\Hmb}{{\mathbb{H}}}
\newcommand{\Imb}{{\mathbb{I}}}
\newcommand{\Jmb}{{\mathbb{J}}}
\newcommand{\Kmb}{{\mathbb{K}}}
\newcommand{\Lmb}{{\mathbb{L}}}
\newcommand{\Mmb}{{\mathbb{M}}}
\newcommand{\Nmb}{{\mathbb{N}}}
\newcommand{\Omb}{{\mathbb{O}}}
\newcommand{\Pmb}{{\mathbb{P}}}
\newcommand{\Qmb}{{\mathbb{Q}}}
\newcommand{\Rmb}{{\mathbb{R}}}
\newcommand{\Smb}{{\mathbb{S}}}
\newcommand{\Tmb}{{\mathbb{T}}}
\newcommand{\Umb}{{\mathbb{U}}}
\newcommand{\Vmb}{{\mathbb{V}}}
\newcommand{\Wmb}{{\mathbb{W}}}
\newcommand{\Xmb}{{\mathbb{X}}}
\newcommand{\Ymb}{{\mathbb{Y}}}
\newcommand{\Zmb}{{\mathbb{Z}}}

\newcommand{\Amc}{{\mathcal{A}}}
\newcommand{\Bmc}{{\mathcal{B}}}
\newcommand{\Cmc}{{\mathcal{C}}}
\newcommand{\Dmc}{{\mathcal{D}}}
\newcommand{\Emc}{{\mathcal{E}}}
\newcommand{\Fmc}{{\mathcal{F}}}
\newcommand{\Gmc}{{\mathcal{G}}}
\newcommand{\Hmc}{{\mathcal{H}}}
\newcommand{\Imc}{{\mathcal{I}}}
\newcommand{\Jmc}{{\mathcal{J}}}
\newcommand{\Kmc}{{\mathcal{K}}}
\newcommand{\lmc}{{\mathcal{l}}}\newcommand{\Lmc}{{\mathcal{L}}}
\newcommand{\Mmc}{{\mathcal{M}}}
\newcommand{\Nmc}{{\mathcal{N}}}
\newcommand{\Omc}{{\mathcal{O}}}
\newcommand{\Pmc}{{\mathcal{P}}}
\newcommand{\Qmc}{{\mathcal{Q}}}
\newcommand{\Rmc}{{\mathcal{R}}}
\newcommand{\Smc}{{\mathcal{S}}}
\newcommand{\Tmc}{{\mathcal{T}}}
\newcommand{\Umc}{{\mathcal{U}}}
\newcommand{\Vmc}{{\mathcal{V}}}
\newcommand{\Wmc}{{\mathcal{W}}}
\newcommand{\Xmc}{{\mathcal{X}}}
\newcommand{\Ymc}{{\mathcal{Y}}}
\newcommand{\Zmc}{{\mathcal{Z}}}

\newcommand{\Abf}{{\mathbf{A}}}
\newcommand{\Bbf}{{\mathbf{B}}}
\newcommand{\Cbf}{{\mathbf{C}}}
\newcommand{\Dbf}{{\mathbf{D}}}
\newcommand{\Ebf}{{\mathbf{E}}}
\newcommand{\Fbf}{{\mathbf{F}}}
\newcommand{\Gbf}{{\mathbf{G}}}
\newcommand{\Hbf}{{\mathbf{H}}}
\newcommand{\Ibf}{{\mathbf{I}}}
\newcommand{\Jbf}{{\mathbf{J}}}
\newcommand{\Kbf}{{\mathbf{K}}}
\newcommand{\Lbf}{{\mathbf{L}}}
\newcommand{\Mbf}{{\mathbf{M}}}
\newcommand{\Nbf}{{\mathbf{N}}}
\newcommand{\Obf}{{\mathbf{O}}}
\newcommand{\Pbf}{{\mathbf{P}}}
\newcommand{\Qbf}{{\mathbf{Q}}}
\newcommand{\Rbf}{{\mathbf{R}}}
\newcommand{\Sbf}{{\mathbf{S}}}
\newcommand{\Tbf}{{\mathbf{T}}}
\newcommand{\Ubf}{{\mathbf{U}}}
\newcommand{\Vbf}{{\mathbf{V}}}
\newcommand{\Wbf}{{\mathbf{W}}}
\newcommand{\Xbf}{{\mathbf{X}}}
\newcommand{\Ybf}{{\mathbf{Y}}}
\newcommand{\Zbf}{{\mathbf{Z}}}

\newcommand{\zero}{{\boldsymbol{0}}}
\newcommand{\one}{{\boldsymbol{1}}}
\newcommand{\abd}{{\boldsymbol{a}}}\newcommand{\Abd}{{\boldsymbol{A}}}
\newcommand{\betabd}{{\boldsymbol{\beta}}}
\newcommand{\bbd}{{\boldsymbol{b}}}\newcommand{\Bbd}{{\boldsymbol{B}}}
\newcommand{\cbd}{{\boldsymbol{c}}}\newcommand{\Cbd}{{\boldsymbol{C}}}
\newcommand{\dbd}{{\boldsymbol{d}}}\newcommand{\Dbd}{{\boldsymbol{D}}}
\newcommand{\ebd}{{\boldsymbol{e}}}\newcommand{\Ebd}{{\boldsymbol{E}}}
\newcommand{\etabd}{{\boldsymbol{\eta}}}
\newcommand{\fbd}{{\boldsymbol{f}}}\newcommand{\Fbd}{{\boldsymbol{F}}}
\newcommand{\gbd}{{\boldsymbol{g}}}\newcommand{\Gbd}{{\boldsymbol{G}}}
\newcommand{\hbd}{{\boldsymbol{h}}}\newcommand{\Hbd}{{\boldsymbol{H}}}
\newcommand{\ibd}{{\boldsymbol{i}}}\newcommand{\Ibd}{{\boldsymbol{I}}}
\newcommand{\jbd}{{\boldsymbol{j}}}\newcommand{\Jbd}{{\boldsymbol{J}}}
\newcommand{\kbd}{{\boldsymbol{k}}}\newcommand{\Kbd}{{\boldsymbol{K}}}
\newcommand{\lbd}{{\boldsymbol{l}}}\newcommand{\Lbd}{{\boldsymbol{L}}}
\newcommand{\mbd}{{\boldsymbol{m}}}\newcommand{\Mbd}{{\boldsymbol{M}}}
\newcommand{\mubd}{{\boldsymbol{\mu}}}
\newcommand{\nbd}{{\boldsymbol{n}}}\newcommand{\Nbd}{{\boldsymbol{N}}}
\newcommand{\nubd}{{\boldsymbol{\nu}}}
\newcommand{\Nalpha}{{\boldsymbol{N_\alpha}}}
\newcommand{\Nbeta}{{\boldsymbol{N_\beta}}}
\newcommand{\Ngamma}{{\boldsymbol{N_\gamma}}}
\newcommand{\obd}{{\boldsymbol{o}}}\newcommand{\Obd}{{\boldsymbol{O}}}
\newcommand{\pbd}{{\boldsymbol{p}}}\newcommand{\Pbd}{{\boldsymbol{P}}}
\newcommand{\phibd}{{\boldsymbol{\phi}}}
\newcommand{\phihatbd}{{\boldsymbol{\hat{\phi}}}}
\newcommand{\psibd}{{\boldsymbol{\psi}}}
\newcommand{\psihatbd}{{\boldsymbol{\hat{\psi}}}}
\newcommand{\qbd}{{\boldsymbol{q}}}\newcommand{\Qbd}{{\boldsymbol{Q}}}
\newcommand{\rbd}{{\boldsymbol{r}}}\newcommand{\Rbd}{{\boldsymbol{R}}}
\newcommand{\rhobd}{{\boldsymbol{\rho}}}
\newcommand{\sbd}{{\boldsymbol{s}}}\newcommand{\Sbd}{{\boldsymbol{S}}}
\newcommand{\tbd}{{\boldsymbol{t}}}\newcommand{\Tbd}{{\boldsymbol{T}}}
\newcommand{\taubd}{{\boldsymbol{\tau}}}
\newcommand{\ubd}{{\boldsymbol{u}}}\newcommand{\Ubd}{{\boldsymbol{U}}}
\newcommand{\vbd}{{\boldsymbol{v}}}\newcommand{\Vbd}{{\boldsymbol{V}}}
\newcommand{\varphibd}{{\boldsymbol{\varphi}}}
\newcommand{\wbd}{{\boldsymbol{w}}}\newcommand{\Wbd}{{\boldsymbol{W}}}
\newcommand{\xbd}{{\boldsymbol{x}}}\newcommand{\Xbd}{{\boldsymbol{X}}}
\newcommand{\xibd}{{\boldsymbol{\xi}}}
\newcommand{\ybd}{{\boldsymbol{y}}}\newcommand{\Ybd}{{\boldsymbol{Y}}}
\newcommand{\zbd}{{\boldsymbol{z}}}\newcommand{\Zbd}{{\boldsymbol{Z}}}

\newcommand{\abar}{{\bar{a}}}\newcommand{\Abar}{{\bar{A}}}
\newcommand{\Amcbar}{{\bar{\Amc}}}
\newcommand{\bbar}{{\bar{b}}}\newcommand{\Bbar}{{\bar{B}}}
\newcommand{\bbdbar}{{\bar{\bbd}}}
\newcommand{\betabar}{{\bar{\beta}}}
\newcommand{\betabdbar}{{\bar{\betabd}}}
\newcommand{\cbar}{{\bar{c}}}\newcommand{\Cbar}{{\bar{C}}}
\newcommand{\ebar}{{\bar{e}}}\newcommand{\Ebar}{{\bar{E}}}
\newcommand{\ebdbar}{{\bar{\ebd}}}
\newcommand{\Embbar}{{\bar{\Emb}}}
\newcommand{\Ebfbar}{{\bar{\Ebf}}}
\newcommand{\etabar}{{\bar{\eta}}}
\newcommand{\fbar}{{\bar{f}}}\newcommand{\Fbar}{{F}}
\newcommand{\Fmcbar}{{\bar{\Fmc}}}
\newcommand{\gbar}{{\bar{g}}}\newcommand{\Gbar}{{\bar{G}}}
\newcommand{\Gammabar}{{\bar{\Gamma}}}
\newcommand{\Gmcbar}{{\bar{\Gmc}}}
\newcommand{\Hbar}{{\bar{H}}}
\newcommand{\ibar}{{\bar{i}}}\newcommand{\Ibar}{{\bar{I}}}
\newcommand{\jbar}{{\bar{j}}}\newcommand{\Jbar}{{\bar{J}}}
\newcommand{\Jmcbar}{{\bar{\Jmc}}}
\newcommand{\kbar}{{\bar{k}}}\newcommand{\Kbar}{{\bar{K}}}
\newcommand{\lbar}{{\bar{l}}}\newcommand{\Lbar}{{\bar{L}}}
\newcommand{\mbar}{{\bar{m}}}\newcommand{\Mbar}{{\bar{M}}}
\newcommand{\mubar}{{\bar{\mu}}}
\newcommand{\Mmbbar}{{\bar{\Mmb}}}
\newcommand{\nbar}{{\bar{n}}}\newcommand{\Nbar}{{\bar{N}}}
\newcommand{\nubar}{{\bar{\nu}}}
\newcommand{\obar}{{\bar{o}}}\newcommand{\Obar}{{\bar{O}}}
\newcommand{\omegabar}{{\bar{\omega}}}
\newcommand{\Omegabar}{{\bar{\Omega}}}
\newcommand{\pbar}{{\bar{p}}}\newcommand{\Pbar}{{\bar{P}}}
\newcommand{\Pbdbar}{{\bar{\Pbd}}}
\newcommand{\Phibar}{{\bar{\Phi}}}
\newcommand{\pibar}{{\bar{\pi}}}
\newcommand{\Pmbbar}{{\bar{\Pmb}}}
\newcommand{\Pmcbar}{{\bar{\Pmc}}}
\newcommand{\psibar}{{\bar{\psi}}}
\newcommand{\qbar}{{\bar{q}}}\newcommand{\Qbar}{{\bar{Q}}}
\newcommand{\rbar}{{\bar{r}}}\newcommand{\Rbar}{{\bar{R}}}
\newcommand{\Rmcbar}{{\bar{\Rmc}}}
\newcommand{\sbar}{{\bar{s}}}\newcommand{\Sbar}{{\bar{S}}}
\newcommand{\sigmabar}{{\bar{\sigma}}}
\newcommand{\Smcbar}{{\bar{\Smc}}}
\newcommand{\tbar}{{\bar{t}}}\newcommand{\Tbar}{{\bar{T}}}
\newcommand{\taubar}{{\bar{\tau}}}
\newcommand{\taubdbar}{{\bar{\taubd}}}
\newcommand{\Thetabar}{{\bar{\Theta}}}
\newcommand{\thetabar}{{\bar{\theta}}}
\newcommand{\ubar}{{\bar{u}}}\newcommand{\Ubar}{{\bar{U}}}
\newcommand{\varphibar}{{\bar{\varphi}}}
\newcommand{\vbar}{{\bar{v}}}\newcommand{\Vbar}{{\bar{V}}}
\newcommand{\wbar}{{\bar{w}}}\newcommand{\Wbar}{{\bar{W}}}
\newcommand{\xbar}{{\bar{x}}}\newcommand{\Xbar}{{\bar{X}}}
\newcommand{\xibar}{{\bar{\xi}}}
\newcommand{\Xitbar}{{\bar{X}_t^i}}
\newcommand{\Xjtbar}{{\bar{X}_t^j}}
\newcommand{\Xktbar}{{\bar{X}_t^k}}
\newcommand{\Xisbar}{{\bar{X}_s^i}}
\newcommand{\ybar}{{\bar{y}}}\newcommand{\Ybar}{{\bar{Y}}}
\newcommand{\zbar}{{\bar{z}}}\newcommand{\Zbar}{{\bar{Z}}}
\newcommand{\zetabar}{{\bar{\zeta}}}

\newcommand{\Ahat}{{\hat{A}}}
\newcommand{\bhat}{{\hat{b}}}\newcommand{\Bhat}{{\hat{B}}}
\newcommand{\Chat}{{\hat{C}}}
\newcommand{\etahat}{{\hat{\eta}}}
\newcommand{\fhat}{{\hat{f}}}
\newcommand{\ghat}{{\hat{g}}}
\newcommand{\hhat}{{\hat{h}}}
\newcommand{\Jhat}{{\hat{J}}}
\newcommand{\Jmchat}{{\hat{\Jmc}}}
\newcommand{\lambdahat}{{\hat{\lambda}}}
\newcommand{\lhat}{{\hat{l}}}
\newcommand{\muhat}{{\hat{\mu}}}
\newcommand{\nuhat}{{\hat{\nu}}}
\newcommand{\phihat}{{\hat{\phi}}}
\newcommand{\psihat}{{\hat{\psi}}}
\newcommand{\rhohat}{{\hat{\rho}}}
\newcommand{\Smchat}{{\hat{\Smc}}}
\newcommand{\Tmchat}{{\hat{\Tmc}}}
\newcommand{\Xhat}{{\hat{X}}}
\newcommand{\Yhat}{{\hat{Y}}}

\newcommand{\atil}{{\tilde{a}}}\newcommand{\Atil}{{\tilde{A}}}
\newcommand{\btil}{{\tilde{b}}}\newcommand{\Btil}{{\tilde{B}}}
\newcommand{\ctil}{{\tilde{c}}}\newcommand{\Ctil}{{\tilde{C}}}
\newcommand{\dtil}{{\tilde{d}}}\newcommand{\Dtil}{{\tilde{D}}}
\newcommand{\etil}{{\tilde{e}}}\newcommand{\Etil}{{\tilde{E}}}
\newcommand{\ebdtil}{{\tilde{\ebd}}}
\newcommand{\etatil}{{\tilde{\eta}}}
\newcommand{\Embtil}{{\tilde{\Emb}}}
\newcommand{\ftil}{{\tilde{f}}}\newcommand{\Ftil}{{\tilde{F}}}
\newcommand{\Fmctil}{{\tilde{\Fmc}}}
\newcommand{\gtil}{{\tilde{g}}}\newcommand{\Gtil}{{\tilde{G}}}
\newcommand{\gammatil}{{\tilde{\gamma}}}
\newcommand{\Gmctil}{{\tilde{\Gmc}}}
\newcommand{\htil}{{\tilde{h}}}\newcommand{\Htil}{{\tilde{H}}}
\newcommand{\itil}{{\tilde{i}}}\newcommand{\Itil}{{\tilde{I}}}
\newcommand{\jtil}{{\tilde{j}}}\newcommand{\Jtil}{{\tilde{J}}}
\newcommand{\Jmctil}{{\tilde{\Jmc}}}
\newcommand{\ktil}{{\tilde{k}}}\newcommand{\Ktil}{{\tilde{K}}}
\newcommand{\ltil}{{\tilde{l}}}\newcommand{\Ltil}{{\tilde{L}}}
\newcommand{\mtil}{{\tilde{m}}}\newcommand{\Mtil}{{\tilde{M}}}
\newcommand{\mutil}{{\tilde{\mu}}}
\newcommand{\ntil}{{\tilde{n}}}\newcommand{\Ntil}{{\tilde{N}}}
\newcommand{\nutil}{{\tilde{\nu}}}
\newcommand{\nubdtil}{{\tilde{\nubd}}}
\newcommand{\otil}{{\tilde{o}}}\newcommand{\Otil}{{\tilde{O}}}
\newcommand{\omegatil}{{\tilde{\omega}}}
\newcommand{\Omegatil}{{\tilde{\Omega}}}
\newcommand{\ptil}{{\tilde{p}}}\newcommand{\Ptil}{{\tilde{P}}}
\newcommand{\Pmbtil}{{\tilde{\Pmb}}}
\newcommand{\phitil}{{\tilde{\phi}}}
\newcommand{\pitil}{{\tilde{\pi}}}
\newcommand{\psitil}{{\tilde{\psi}}}
\newcommand{\qtil}{{\tilde{q}}}\newcommand{\Qtil}{{\tilde{Q}}}
\newcommand{\rtil}{{\tilde{r}}}\newcommand{\Rtil}{{\tilde{R}}}
\newcommand{\rbdtil}{{\tilde{\rbd}}}
\newcommand{\Rmctil}{{\tilde{\Rmc}}}
\newcommand{\rhotil}{{\tilde{\rho}}}
\newcommand{\stil}{{\tilde{s}}}\newcommand{\Stil}{{\tilde{S}}}
\newcommand{\sigmatil}{{\tilde{\sigma}}}
\newcommand{\tautil}{{\tilde{\tau}}}
\newcommand{\ttil}{{\tilde{t}}}\newcommand{\Ttil}{{\tilde{T}}}
\newcommand{\Tmctil}{{\tilde{\Tmc}}}
\newcommand{\thetatil}{{\tilde{\theta}}}
\newcommand{\util}{{\tilde{u}}}\newcommand{\Util}{{\tilde{U}}}
\newcommand{\vtil}{{\tilde{v}}}\newcommand{\Vtil}{{\tilde{V}}}
\newcommand{\Vmctil}{{\tilde{\Vmc}}}
\newcommand{\varphitil}{{\tilde{\varphi}}}
\newcommand{\wtil}{{\tilde{w}}}\newcommand{\Wtil}{{\tilde{W}}}
\newcommand{\xtil}{{\tilde{x}}}\newcommand{\Xtil}{{\tilde{X}}}
\newcommand{\xitil}{{\tilde{\xi}}}
\newcommand{\Xittil}{{\tilde{X}^{i}_t}}
\newcommand{\Xistil}{{\tilde{X}^{i}_s}}
\newcommand{\ytil}{{\tilde{y}}}\newcommand{\Ytil}{{\tilde{Y}}}
\newcommand{\ztil}{{\tilde{z}}}\newcommand{\Ztil}{{\tilde{Z}}}
\newcommand{\zetatil}{{\tilde{\zeta}}}
\newcommand{\Zittil}{{\tilde{Z}^{i,N}_t}}
\newcommand{\Zistil}{{\tilde{Z}^{i,N}_s}}

\newcommand{\newset}{\mathcal{A}}
\newcommand{\newsetRW}{F}
\newcommand{\newdomain}{E}





\begin{frontmatter}
\title {Many-Server Asymptotics for Join-the-Shortest-Queue: Large Deviations and Rare Events 	
}

 \runtitle{LDP for JSQ in the Many-Server Limit}

\begin{aug}
\author{Amarjit Budhiraja\thanks{Research  supported in part by the National Science Foundation (DMS-1814894, DMS-1853968).}, Eric Friedlander, and Ruoyu Wu \\ \ \\
}
\end{aug}

March 19, 2020

\skp

\begin{abstract}
	The Join-the-Shortest-Queue routing policy is studied in an asymptotic regime where the number of processors $n$ scales with the arrival rate.
	A large deviation principle (LDP) for the occupancy process is established, as $n\to \infty$, in a suitable infinite-dimensional path space. 
	Model features that present technical challenges include,  Markovian dynamics with discontinuous statistics,  a diminishing rate property of the transition probability rates, and an infinite-dimensional state space. 
	The difficulty is in the proof of the Laplace lower bound which requires establishing the uniqueness of solutions of certain infinite-dimensional systems of controlled ordinary differential equations. 
	The LDP gives information on the rate of decay of probabilities of various types of rare events associated with the system.
	We illustrate this by establishing explicit exponential decay rates for probabilities of long queues. 
	In particular, denoting by 
	$E_j^n(T)$  the event that there is at least one queue with $j$ or more jobs at some time instant 
	over $[0,T]$, we show that, in the critical case, for large $n$ and $T$,
	$\Pmb(E^n_j(T)) \approx \exp\left [-\frac{n (j-2)^2}{4T}\right].$
	\\
	
\noi {\bf AMS 2000 subject classifications:} 60F10, 90B15, 91B70, 60J75, 34H05

\noi{\bf Keywords:} large deviations, load balancing, discontinuous statistics, diminishing rates, JSQ,  jump-Markov processes in infinite dimensions, calculus of variations, infinite-dimensional Skorokhod problem, golden ratio.
\end{abstract}

\end{frontmatter}

\section{Introduction}
Consider a system of $n$ parallel processors, each processing jobs  in its queue at rate $1$.
Jobs enter the system at rate $n\lambda_n$ with $\lambda_n\to\lambda \in (0,\infty)$ as $n\to\iy$.
Service times and inter-arrival times are exponentially distributed and are mutually independent.
Upon arriving, each job is routed to the shortest available queue by a central dispatcher. This is known as the 
Join-the-Shortest-Queue (JSQ) routing policy and is a popular model for load balancing among distributed resources in parallel-processing systems that arise in applications of cloud computing, file transfers, database look-ups etc.\ (see the survey article \cite{van2018scalable} and references therein). Denote by $X^n_i(t)$ the proportion of queues at time instant $t$ with $i$ or more jobs. 
This occupancy process
$X^n(t) \doteq (X^n_1(t),X^n_2(t), \cdots)$ is a convenient state descriptor for this system. In this work we establish a large deviation principle (LDP) for $X^n$ in the path space $\Dmb_{\Rmb^{\infty}}$, where for a Polish space $S$,  $\Dmb_S$  denotes the space of all maps from $[0,T]$ to $S$ that are right continuous and have left limits, equipped with the usual 
Skorokhod topology. 
This result gives a characterization of exponential decay rates for events of the form $\Pmb(X^n\in \GG)$, where $\GG$ is a 
suitable Borel set in $\Dmb_{\Rmb^{\infty}}$, in terms of the associated rate function (see Theorem \ref{thm:mainResult}
for a precise statement). 
The rate function takes a variational form and is given as the value function of an infinite-dimensional  deterministic optimal control problem (see \eqref{eqn:JSQRateFunction}). 

In general this control problem is intractable and in order to obtain useful information, beyond the fact that certain probabilities of interest converge to $0$ at an exponential rate, one needs  approximations, e.g.\ by computing costs for sub-optimal control actions. 
Nevertheless, for some events of interest, one can say more. We illustrate this by studying the decay rate of probabilities of long queues. For this, we restrict attention to the critical case $\lambda_n\to1$ and initial configuration $X^n_j(0) = \one_{\{j=1\}}$ (i.e.\ all queues are length 1 at time 0). Consider the set $E_j^n(T)$ that represents the event that there is at least one queue with $j$ or more jobs at some time instant 
over $[0,T]$. In Theorem \ref{thm:largeqs} we give an explicit characterization of the exponential decay rate of the probability of such events for $j\ge 3$.
In particular, when $j=3$ and $T=1$ (or, more generally, when $j-2=T$), we obtain the following formula (see \eqref{eq:eq332}) in terms of the golden ratio
$$\Pmb(E^n_j(T)) \approx \exp\left [-n T\left(\ell\left( \frac{1+\sqrt{5}}{2} \right) + 
\ell\left( \frac{-1+\sqrt{5}}{2} \right) \right)\right], \mbox{ for large } n,$$
where $\ell(x) \doteq x\log x - x +1$ for $x\ge 0$.
For long time horizons, the decay rates take an even more simple form, namely we show that for large $n$ and large $T$,
\begin{equation}\label{eq:asympform}
	\Pmb(E^n_j(T)) \approx \exp\left [-\frac{n (j-2)^2}{4T}\right].\end{equation}
Although not pursued in this work, techniques used to establish the above explicit asymptotic rates are expected to be useful for other types of events as well, see Conjecture \ref{rem:otherevent}. 

There are several technical challenges in establishing the LDP on the path space (namely Theorem \ref{thm:mainResult}). These stem from three key features of the model that are illustrated in Figure~\ref{fig1}.\\
\begin{figure}
	\includegraphics[width=.6\textwidth]{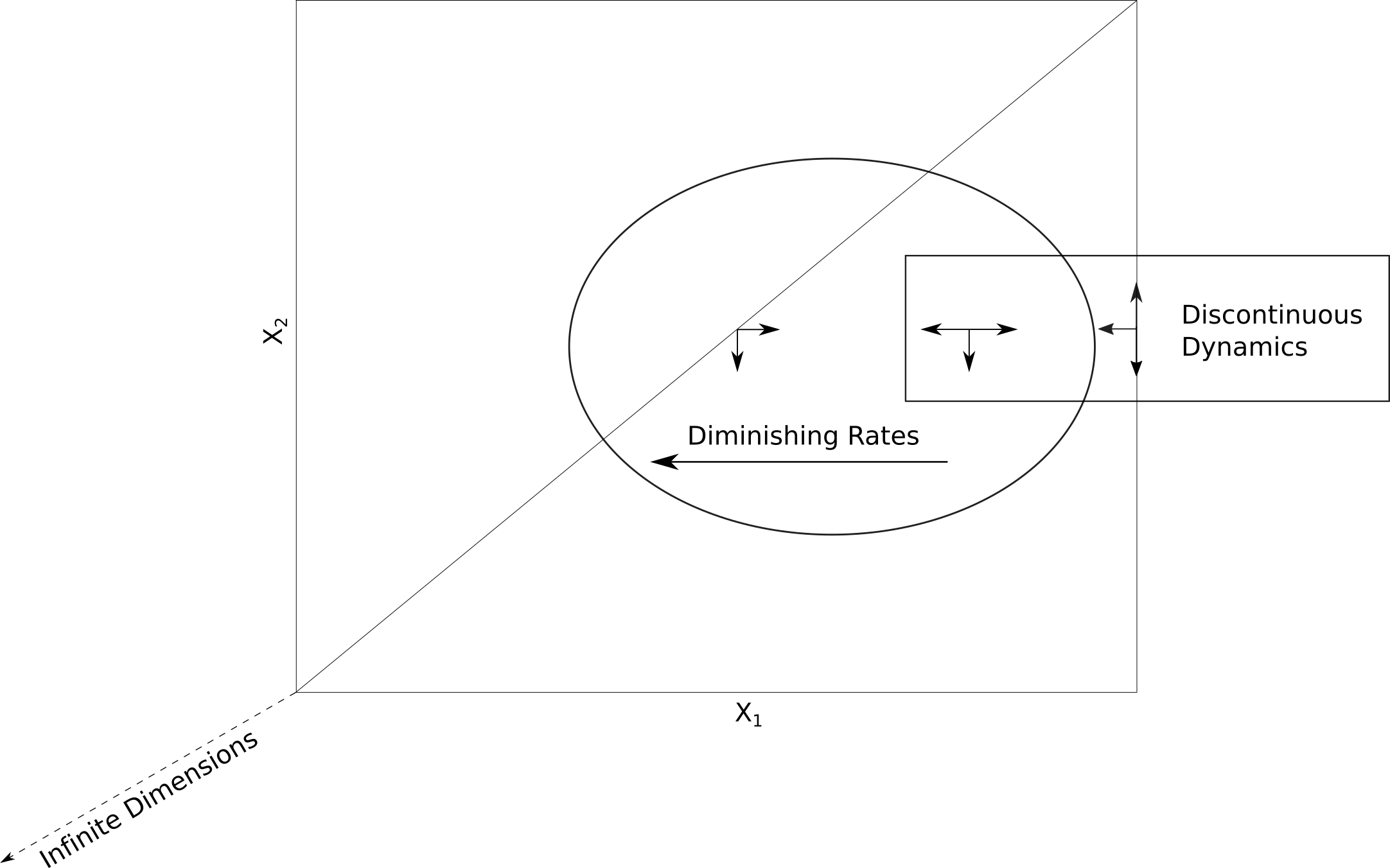}
	\caption{Three key features of the model: infinite dimensions, discontinuous dynamics, diminishing rates.}\label{fig1}
\end{figure}
\begin{addmargin}[1em]{2em}
{\em Infinite-dimensional dynamics.}  The state process $X^n$ is an infinite-dimensional Markov process and there is a non-trivial coupling between the different coordinates of the process. As a consequence, the associated rate function in the large deviation analysis is characterized through an infinite-dimensional control problem. 
Some recent works that have studied large deviation properties of  jump-Markov processes in infinite dimensions include \cite{BhamidiBudhirajaDupuisWu2019rare, budhiraja2013large, budwuptrf}.

\noindent {\em Discontinuous dynamics.}
The model considered here falls in the class of Markov processes with discontinuous statistics. Roughly speaking, this means that the transition rates change discontinuously at the interface of different regions of the state space. Large deviation analysis of such systems is technically challenging and has been the focus of several works \cite{alahaj, atadup1, dupell1, dupell2, dupell3, ignatiouk-robert2000, ign2, puhvla}.
In the current work an additional challenge is that there are infinitely many regions across which transition behavior changes discontinuously.

\noindent {\em Diminishing rates.} In the study of large deviation properties of Markov processes for which the transition probability rates decrease (continuously) to zero along some directions, one is led to local rate functions that have poor regularity properties. Some works that have treated large deviation properties of such systems include \cite{tib1, dupramwu, leo1, agazzi2018, BhamidiBudhirajaDupuisWu2019rare}. The model considered in the current work has similar features, in fact the setting here is more challenging in that the process $X^n$ may switch among infinitely many regions in which rates diminish along different directions.
\end{addmargin}

We note that all the papers referenced above, except \cite{BhamidiBudhirajaDupuisWu2019rare}, include only one of the three features noted above, while the paper 
\cite{BhamidiBudhirajaDupuisWu2019rare} has the first and third feature but not the second.  The combination of the three features described above is the main technical challenge in this work and  most of the arguments in Section \ref{sec:LowerBound}, which is the heart of this work,  are devoted to overcoming these  challenges in establishing a certain uniqueness property.

A LDP for a JSQ system has been obtained in \cite{puhvla} (see also \cite{ridshw}). 
However, the scaling regime considered in these works is very different from the one of interest here. 
Specifically, they consider a setting with a \textit{fixed} number of queues for which the arrival and service rates are scaled up by a factor of $n$, and the LDP is established for the finite dimensional queue length process scaled down by a factor of $\frac{1}{n}$. 
In this regime, \cite{puhvla} is in fact able to allow general arrival time distributions, different service time distribution parameters for different queues, and a weighted version of the JSQ policy. 
For the scaling regime considered here, in which the arrival rate and number of queues $n$ approach infinity, we establish a LDP  for the occupancy process (equivalent to the empirical distribution) of $n$ queue length processes.
While sacrificing some of the generality of \cite{puhvla}, this approach provides insights into questions and situations not addressed in \cite{puhvla}.
Specifically, the results here shed light on how probabilities of rare events decay as the size of the system is increased and the load on the system remains constant.
Suppose, for example, one is interested in deciding the size of a JSQ system (i.e.\ the number  of servers $n$), in which each queue has a finite buffer of size $k$, so that the system experiences an overflow event over the time horizon $[0,T]$ (namely, over the interval $[0,T]$, an arrival occurs to at least one of the  queues with a full buffer) with probability at most $e^{-6}$. Then, 
in the critical case $\lambda_n\to1$ and for an initial configuration where all queues are length 1 at time 0,
a calculation based on the asymptotic formula \eqref{eq:asympform} tells us that, roughly, one should take $n \approx 24T/(k-1)^2$. Such questions cannot be readily analyzed from the LDP  for a fixed size JSQ system established in \cite{puhvla}. Indeed, since the queue lengths in the analysis of \cite{puhvla} are scaled down by a factor of $n$, the results there will say that, for any fixed size system, the asymptotic (under their scaling) probability that at least one queue will attain a length of $m$ is  the same for all $m>0$.
%
We also note that the proof techniques here are very different than those employed in \cite{puhvla}. 
In particular, as noted previously, unlike \cite{puhvla}, the state descriptor here is infinite-dimensional
and the transition rates approach zero in certain directions, which requires a careful analysis of an infinite-dimensional Skorokhod problem and a delicate analysis of an important uniqueness property.

Queuing systems with many parallel servers in the regime where the arrival rate scales with the number of servers have been studied extensively.
A significant portion of this body of work concerns the setting in which,  upon arrival, jobs join a global queue and a large pool of servers processes jobs from this queue in a FIFO fashion.
One of the first works on such queuing systems is by Halfin and Whitt \cite{halwhi1} in which the number of servers and arrival rate increase to infinity while the load $\rho_n$ approaches one from below such that $(1-\rho_n)\sqrt{n} \to \beta \in (0,\infty)$.
Thereafter, this type of scaling has been often referred to as the Halfin-Whitt regime. The setting considered in the current work is different from  Halfin-Whitt type queuing systems with a global queue and is motivated by applications in which load balancing is an important concern. In terms of analysis, the setting considered here requires tracking an infinite-dimensional state instead of the size of a single queue. Some of the works that have considered the asymptotics of a JSQ system under a scaling of the form considered in the current work are \cite{eschenfeldt2018join, mukboretal, braverman2018steady, banerjee2018join}. In particular, \cite{eschenfeldt2018join} proves a central limit theorem under the heavy traffic scaling $(1-\lambda_n)\sqrt{n} \to \beta \in (0,\infty)$ while \cite{mukboretal} gives a law of large numbers (LLN) result under the more general condition $\lambda_n \to \lambda \in (0,\infty)$.

Many other load balancing policies have been studied in the literature (see \cite{vvedenskaya1996queueing, mukherjee2015universality,mitzenmacher2001power,bramson2012asymptotic,stolyar2015pull,   graham2000chaoticity,gupta2017load,budhiraja2017diffusion} and refereces therein) and a survey of recent advances can be found in \cite{van2018scalable}. In particular, \cite{mukherjee2015universality} considers the JIQ routing policy in which an incoming job is routed to an idle server, if available, and according to another routing policy (e.g.\ uniformly at random) if there are no idle servers in the system.
The authors use a coupling argument to show that, under the heavy traffic condition of \cite{eschenfeldt2018join}, JIQ and JSQ behave the same under the diffusion scaling.
While JSQ and JIQ look similar in the LLN limit and under the diffusion scaling, the statistical tail behavior of the two systems is expected to be quite different. 
\begin{figure}
	\includegraphics[width=.5\textwidth]{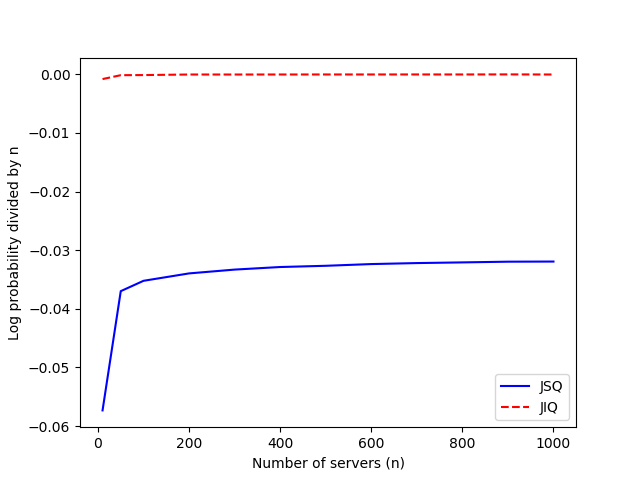}
	\caption{Monte Carlo Estimates for the rate of decay of the probability of $E^n_3(10)$ with $\lambda_n=.99$ for JSQ and JIQ starting with all queues of length 1.}\label{fig2}
\end{figure}
For example, Figure \ref{fig2} gives Monte Carlo  estimates of $\frac{1}{n}\log\PP(E^n_3(10))$ for JSQ and JIQ with $\lambda_n=.99$, starting with all queues of length 1. As is clear from this figure, the performance differences between the two policies are more clearly revealed when  one considers extreme tail events viewed under a large deviation scaling (see \cite{gupta2017load} for an alternative point of view for differentiating the performance of JSQ and JIQ systems). The LDP established in this work characterizes the tail statistical behavior of the JSQ system.
Establishing a similar result for the JIQ system presents significant new challenges that arise from the analysis of events that include time instants where no idle servers are present. This study will be taken up in a future work. Although not considered here, the large deviation principle given in this work also provides a starting point for developing efficient importance sampling schemes for estimating probabilities of rare events in a fixed size JSQ system (see also Remark \ref{rem:opttraj}).  

We now  comment on the proof idea of our main result, Theorem \ref{thm:mainResult}. 
The starting point of our analysis is to introduce a convenient representation for the evolution of the state process $X^n$. 
This is given in \eqref{eqn:stateProc1p}--\eqref{eqn:stateProc2p} through an infinite collection of i.i.d.\ Poisson random measures (PRM) where each of the PRM corresponds to a stream of events (i.e.\ arrivals or job completions).
The use of PRM  allows us to introduce controls on the rates of these events.
Roughly speaking, one can view these controls as pushing the state process away from its LLN limit to a ``rare-event trajectory'' while
the cost incurred by the control for perturbing the state process will determine the exponential decay rate of probabilities for such rare paths.
The state process has an equivalent and simpler description given in \eqref{eq:equivrepn} through which it can be viewed as a solution of an infinite-dimensional
Skorokhod problem for a {\em free process} $Y^n$ with sample paths in $\Dmb_{\Rmb^{\infty}}$. 
The existence and uniqueness of solutions
of this Skorokhod problem is established in Section \ref{sec:skorokhod}. 
The main results of this work, Theorem \ref{thm:mainResult},  will not only give a LDP for $X^n$ but in fact for the pair $(X^n, Y^n)$ in $\Dmb_{\Rmb^{\infty}\times \Rmb^{\infty}}$. 
For the proof of the LDP we consider its equivalent formulation in terms of a Laplace principle (see e.g.\ \cite{dupuis2011weak}). 
Specifically, to prove the LDP in Theorem \ref{thm:mainResult}, it suffices to show that every continuous and bounded function $G$ on the path space $\Dmb_{\Rmb^{\infty}\times \Rmb^{\infty}}$ satisfies the upper and lower bounds associated with this Laplace principle given in \eqref{eqn:LaplaceUpperBound} and \eqref{eqn:LaplaceLowerBound}, respectively, and the function $I_T$ in these equations has compact sub-level sets making it a rate function.

The key ingredient in establishing these results is a variational representation for expected values of exponential functionals of PRM established in \cite{budhiraja2011variational}. 
This result can be applied to give variational formulas for the expected values on the left sides of  the Laplace principle bounds, \eqref{eqn:LaplaceUpperBound} and \eqref{eqn:LaplaceLowerBound}.
These formulas are given in terms of controlled analogues of the PRM and state process.
Recalling that the uncontrolled PRM correspond to streams of job arrivals or completions, each controlled PRM is constructed using a control process that suitably modulates the rate at which the corresponding stream of events occurs.
Using controls $\varphi = (\varphi_i)_{i\in \Nmb_0}$, where $\varphi_i$ is the control associated with the $i$-th PRM, the controlled state process $(\bar X^n, \bar Y^n)$ is defined through \eqref{eq:maincontproc}.
The variational representation from  \cite{budhiraja2011variational} allows us to express the expected values of interest in 
\eqref{eqn:LaplaceUpperBound} and \eqref{eqn:LaplaceLowerBound} in terms of an infimum of costs associated with these controls and the associated controlled state process (see Lemma \ref{lem:varRep}).
The utility of this representation is twofold.
First, it reduces the majority of the proof of the LDP to arguing tightness and characterization of limits of sequences of carefully chosen controls and controlled state processes.
In particular, the limits points $(\zeta, \psi)$ of the controlled state processes can be characterized
as solutions to a system of controlled ordinary differential equations (ODE), expressed in \eqref{eq:psi1}--\eqref{eq:psii} driven by limit control processes $\{\varphi_i\}$.
Second, the representation hints at the form of the rate function $I_T$ for the LDP.
In view of the above characterization of limiting controlled state processes and in comparing the infimum in Lemma \ref{lem:varRep} to the desired right side of Laplace asymptotics in \eqref{eqn:LaplaceUpperBound} and \eqref{eqn:LaplaceLowerBound}, namely,
$$\inf_{(\zeta,\psi)\in \Cmc_T}\{I_T(\zeta,\psi)+G(\zeta,\psi)\}$$
where the space $\Cmc_T$ of trajectories in $\Dmb_{\Rmb^{\infty}\times \Rmb^{\infty}}$ is described in Section \ref{sec:rateFunction}, 
 it is natural to conjecture the following form for the rate function evaluated on a given pair of trajectories
 $(\zeta,\psi)\in \Cmc_T$. Consider the class $\cls_T(\zeta, \psi)$ of all controls $\{\varphi_i\}$ for which
 the given pair $(\zeta,\psi)$ solves the controlled ODE in \eqref{eq:psi1}--\eqref{eq:psii}. Then the rate function, $I_T(\zeta,\psi)$
 suggested by the above considerations is the infimum of the cost, 
$$\sum_{i=0}^\iy\int_{[0,T]\times[0,1]}\vartheta_i\ell(\varphi_i(s,y))\,ds\,dy,$$
over all controls $\{\varphi_i\} \in \cls_T(\zeta, \psi)$, where $\vartheta_i$ is defined above \eqref{eqn:JSQRateFunction}.

In order to prove the upper bound we consider, for each $n$, a near-optimal control and controlled process for the infimum in Lemma
\ref{lem:varRep}, establish the tightness of the sequence of such  processes in a suitable space, and characterize the weak limit points of the sequence. This is done in Sections \ref{sec:tight} -- \ref{sec:weakconv}. From these properties the Laplace upper bound in \eqref{eqn:LaplaceUpperBound} follows by standard arguments that use Fatou's lemma and certain lower semicontinuity properties, as shown
in Section \ref{sec:UpperBound}.
The proof that the function $I_T$ has compact sublevel sets and, thus, is a rate function has many similarities to the proof of the Laplace upper bound and is provided in Section \ref{sec:compactSets}.

The main technical challenge in this work is in the proof of the Laplace lower bound \eqref{eqn:LaplaceLowerBound}. 
For this, one starts with the variational expression on the right side of the inequality, namely,
$$\inf_{(\zeta,\psi)\in \Cmc_T}\{I_T(\zeta,\psi)+G(\zeta,\psi)\}.$$
The basic idea is to select a trajectory $(\zeta^*, \psi^*)$ in $\Cmc_T$ that is $\varepsilon$-optimal for the above infimum and then select a control
$\varphi^*\in \Smc_T(\zeta^*, \psi^*)$ driving this trajectory which is $\varepsilon$-optimal for the rate function evaluated at $(\zeta^*, \psi^*)$, i.e.\ $I_T(\zeta^*, \psi^*)$. 
In view of the variational representation of the Laplace functional of interest, given in Lemma \ref{lem:varRep},
one would like to construct a sequence of controlled state processes of the form in \eqref{eq:maincontproc} that converge to 
$(\zeta^*, \psi^*)$ such that the associated cost converges in an appropriate manner as well.  There is a natural choice for a sequence of controlled processes that one can attempt to implement for this purpose and it is relatively easy to show that this sequence of controlled processes has the needed tightness properties and that the weak limit points $(\bar \varphi, \bar \zeta, \bar \psi)$ of the controls and controlled state processes satisfy 
$\bar \varphi = \varphi^*  \in \Smc_T(\bar \zeta, \bar \psi)$.

However this is where one faces the main obstacle. From the above characterizations of the limit points it is not possible to deduce, in general, that $(\bar \zeta, \bar \psi) = (\zeta^*, \psi^*)$.
The issue is regarding the uniqueness of solutions of the infinite system of controlled ODE described by 
\eqref{eq:psi1}--\eqref{eq:psii} (considered with $(\zeta, \psi, \varphi)$ replaced with $(\zeta^*, \psi^*, \varphi^*)$).
Namely, if one is able to say that for a given $\varphi^*$ there is a unique pair $(\zeta^*, \psi^*)$ satisfying this system of equations then one obtains the desired result 
$(\bar \zeta, \bar \psi) = (\zeta^*, \psi^*)$. Showing uniqueness of such a system of equations is hard in general and in fact may not hold.
Most of Section \ref{sec:LowerBound} is devoted to overcoming this challenge. The main result is Lemma \ref{lem:uniqueness} which says
that one can replace $(\zeta^*, \psi^*)$ by a nearby pair of trajectories $(\zeta, \psi)$ for which the desired uniqueness property does hold with an appropriate near optimal control
$\varphi$. Key ingredients in the proof of this lemma are Lemmas \ref{lem:uniqueness_preparation_2}--\ref{lem:uniqueness_preparation}
and together these three lemmas, which perform several delicate surgeries on the original infinite dimensional trajectory $(\zeta^*, \psi^*)$
and the associated control $\varphi^*$, are at the technical heart of this work.  Additional comments on the proofs of these lemmas are given in Section \ref{sec:LowerBound}.

The paper is organized as follows.  Section \ref{sec:JSQmodel} introduces the state dynamics in terms of an infinite collection of PRM  and also gives an equivalent representation through an infinite-dimensional Skorokhod map. The properties of this map are studied in Section \ref{sec:skorokhod}. In preparation for the main result, in Section \ref{sec:rateFunction}, we introduce the rate function that governs the LDP.
The main result, Theorem \ref{thm:mainResult}, is then given in Section \ref{sec:mainResult}. This section also presents Theorem \ref{thm:largeqs} which gives our main result on exponential decay rates for probabilities of long queues as an illustration of applications of Theorem \ref{thm:mainResult}. Other possible applications of this result are discussed briefly in Conjecture \ref{rem:otherevent}. Section \ref{sec:varWeakCon} introduces the main variational representation that is the starting point of our analysis and establishes preliminary tightness and limit characterization results that are used in both the Laplace upper bound and lower bound proofs. Proof of the Laplace upper bound (i.e.\ \eqref{eqn:LaplaceUpperBound}) is completed in Section \ref{sec:UpperBound} while the lower bound (i.e.\ \eqref{eqn:LaplaceLowerBound}) is taken up in Section \ref{sec:LowerBound}.
Section \ref{sec:compactSets} shows that the function $I_T$ introduced in Section \ref{sec:rateFunction} is indeed a rate function. The results of Sections \ref{sec:UpperBound}, \ref{sec:LowerBound}, and \ref{sec:compactSets} together complete the proof of Theorem
\ref{thm:mainResult}. Finally Section \ref{sec:examples}  gives the proof of Theorem \ref{thm:largeqs}.

\subsection{Notation}
The following notation will be used. 
Fix $T < \infty$. All stochastic processes will be considered over the time horizon $[0,T]$.  
We denote the Lebesgue measure on a Euclidean space as $\leb$.
Let $S$ be a Polish space.
For a set $B\in S$ we denote the closure of $B$ as $\bar B$.
The Borel $\sigma$-field on $S$ will be denoted as $\clb(S)$.
Denote by $\Dmb_S$  the collection of all maps from $[0,T]$ to $S$ that are right continuous and have left limits. 
This space is equipped with the usual 
Skorokhod topology. 
Similarly $\Cmb_S$ is the space of all continuous maps from $[0,T]$ to $S$ equipped with the uniform topology. 
A sequence of $\Dmb_S$ valued random variables is said to be $\CC$-tight if it is tight in $\Dmb_S$ and any weak limit point takes values in $\Cmb_S$ a.s.
The space of all continuous and bounded real valued functions on $S$ will be denoted as $\CC_b(S)$.
For a bounded map $x: S \to \Rmb$, let $\|x\|_{\infty} \doteq \sup_{s \in S} |x(s)|$.

Let $(\XX, \|\cdot\|)$ be a metric space. 
We denote by $\XX^\iy$  the set of all sequence $x = \{x_i\}_{i\in\NN}$ such that $x_i\in\XX$ for all $i\in\NN$.
$\XX^\iy$ is equipped with the product topology, which is metrized with
\begin{equation*}
	d(x,y) \doteq \sum_{i=1}^\iy\frac{\|x_i-y_i\|\wedge 1}{2^i}.
\end{equation*}
Let $\ell(z) \doteq z\log(z)-z+1$ for $z \ge 0$.

\section{Model and Results}\label{sec:ModelRes}
In this section we will describe the model of interest and present our main results.
We begin by giving a precise mathematical formulation of the JSQ system in Section \ref{sec:JSQmodel}. The state process can equivalently be described through a certain infinite-dimensional Skorokhod map. This map is introduced and studied in 
Section \ref{sec:skorokhod}. 
In Section \ref{sec:rateFunction} we introduce the large deviation problem of interest and present the  rate function for the associated LDP.
 Section \ref{sec:mainResult}  presents  the main result of this work (Theorem \ref{thm:mainResult}) which, in particular, gives a large deviation principle for the queue occupancy process $X^n$ as the number of servers (and arrival rate) approaches infinity. This LDP can be used to extract information about probabilities of various types of rare events and in Theorem \ref{thm:largeqs} 
 we present one such application that provides estimates for probabilities of occurrence of ``large queues''.

\subsection{Model Description}\label{sec:JSQmodel}
Consider a system of $n$ parallel servers each maintaining its own  queue.
Jobs arrive in the system according to a Poisson process with rate $n\lambda_n$ where $\lambda_n \to\lambda$ for some $\lambda\in(0,\iy)$.
When a job enters the system, a central dispatcher queries each server and routes the job to the server with the shortest queue. If there are multiple shortest queues, then the tie is broken uniformly at random. Each server processes jobs in its queue using the FIFO protocol and the service times are exponential with mean $1$. We assume that the  inter-arrival times and service times are mutually independent.
The state of the system at time $t$ can be represented as $X^n(t) = (X^n_0(t), X^n_1(t),\ldots)$ where $X^n_i(t)$ corresponds to the proportion of queues which are of length $i$ or longer at time $t$.
Note that $X^n_i(t)\in[0,1]$ and $1=X^n_0(t)\geq X^n_1(t)\geq X^n_2(t)\geq \ldots$ for all $t\in[0,T]$.

We will now give a convenient evolution equation for the state process in terms of a collection of Poisson random measures. 
For a locally compact metric space $\SSS$, let $\clm_{FC}(\SSS)$ represent the space of measures $\nu$ on $(\SSS,\clb(\SSS))$ such that $\nu(K)<\iy$ for every compact $K\in\clb(\SSS)$, equipped with the usual vague topology.
This topology can be metrized such that $\clm_{FC}(\SSS)$ is a Polish space (see \cite{budhiraja2016moderate} for one convenient metric).
A PRM $D$ on $\SSS$ with mean measure (or intensity measure) $\nu\in\clm_{FC}(\SSS)$ is an $\clm_{FC}$-valued random variable such that for each $\GG\in\clb(\SSS)$ with $\nu(\GG)<\iy$, $D(\GG)$ is a Poisson random variable with mean $\nu(\GG)$ and for disjoint $\GG_1,\ldots,\GG_k\in\clb(\SSS)$, the random variables $D(\GG_1),\ldots, D(\GG_k)$ are mutually independent random variables (cf. \cite{IkedaWatanabe}).

Fix $T\in(0,\iy)$ and let $(\Om,\clf,\PP)$ be a complete probability space on which we are given a collection of i.i.d.\
Poisson random measures $\{D_k(ds\, dy\, dz)\}_{k\in\NN_0}$ on $[0,T]\times[0,1]\times\RR_+$ with intensity given by the Lebesgue measure.
Define the filtration $\{\hat{\clf}_t\}_{0\leq t\leq T}$ as
\begin{align*}
	\hat{\clf}_t\doteq\sigma\{D_k((0,s]\times \GG\times B),0\leq s\leq t, \GG\in\clb([0,1]), B\in\clb(\RR_+)\}
\end{align*}
and let $\{\clf_t\}_{0\leq t\leq T}$ be the $\PP$-augmentation of this filtration.
Using the above collection of PRM we now construct certain Point Processes with points in $[0,T]\times [0,1]$ as follows. 

Let $\bar{\clf}$ be the $\{\clf_t\}_{0\leq t\leq T}$-predictable $\sigma$-field on $\Om\times[0,T]$.
Denote by $\bar{\newset}_+$ the class of all $(\bar{\clf}\otimes \clb([0, 1]))/\clb(\RR_+)$-measurable
maps from $\Om\times[0, T]\times [0, 1]$ to $\RR_+$.
For $\varphi\in\bar{\newset}_+$ and each $k\in\NN_0$, define the counting process $D_k^\varphi$ on $[0, T]\times[0, 1]$ by
\begin{align*}
	D_k^\varphi([0, t] \times \GG)
	\doteq
	\int_{[0,t]\times \GG}\one_{[0,\varphi(s,y)]}(z) D_k(ds\, dy\, dz), \text{ for } t \in [0, T],\ \GG \in \clb([0, 1]).
\end{align*}
We regard $D_k^\varphi$ as a controlled random measure, where $\varphi$ is the control process that can be used to produce a
desired intensity. We will write $D_k^\varphi$ as $D^\theta_k$ if $\varphi = \theta$ for some constant $\theta\in\RR_+$.
In particular we will frequently take $\theta = n$.
Note that $D_k^\theta$ is a PRM on $[0, T] \times [0, 1]$ with intensity $\theta\, ds\times dy$.

By using $D_0$ to represent the arrival process and $D_i$ to represent the departure process from queues with $i$ customers, $i \in \Nmb$, we can now give the state evolution of  $X^n$ as follows,
\begin{align}
	X^n_1(t) &= X^n_1(0) + \frac{1}{n} \int_{[0,t]\times[0,1]} \one_{\{X^n_{1}(s-) < 1\}} D_{0}^{n\lambda_n}(ds\,dy)\label{eqn:stateProc1p}\\
	&\qquad - \frac{1}{n} \int_{[0,t]\times[0,1]} \one_{[0,X^n_1(s-)-X^n_2(s-))}(y) D_1^{n}(ds\,dy), \nonumber\\
	X^n_i(t) &= X^n_i(0) + \frac{1}{n} \int_{[0,t]\times[0,1]} \one_{\{X^n_{i-1}(s-) = 1, X^n_{i}(s-) < 1\}} D_{0}^{n\lambda_n}(ds\,dy)\label{eqn:stateProc2p}\\
	&\qquad - \frac{1}{n} \int_{[0,t]\times[0,1]} \one_{[0,X^n_i(s-)-X^n_{i+1}(s-))}(y) D_i^{n}(ds\,dy),\ i\geq 2. \nonumber
\end{align}

 The first integral on the right side of \eqref{eqn:stateProc1p} corresponds to incoming jobs that join an empty queue.  The indicator in the integral captures the fact that this can happen only when an empty queue is available. The second term on the right side of \eqref{eqn:stateProc1p} corresponds to completion of  jobs by a server with only one job in the queue.   The terms in equation \eqref{eqn:stateProc2p}
 are interpreted in an analogous manner.  By introducing {\em reflection terms}  $\eta_i^n$ defined by
\begin{align}
	\eta_i^n(t) & \doteq \frac{1}{n} \int_{[0,t]\times[0,1]} \one_{\{X^n_{i}(s-) = 1\}} D_{0}^{n\lambda_n}(ds\,dy),\ i\geq 1\label{eqn:stateProc3}
\end{align}
one can rewrite the state equation as follows.
Define a \emph{free process} $Y^n$ as
\begin{align}
	Y^n_1(t) & = X^n_1(0) + \frac{1}{n} \int_{[0,t]\times[0,1]} D_{0}^{n\lambda_n}(ds\,dy) - \frac{1}{n} \int_{[0,t]\times[0,1]} \one_{[0,X^n_1(s-)-X^n_2(s-))}(y) D_1^{n}(ds\,dy), \label{eqn:stateProc4} \\
	Y^n_i(t) & = X^n_i(0) - \frac{1}{n} \int_{[0,t]\times[0,1]} \one_{[0,X^n_i(s-)-X^n_{i+1}(s-))}(y) D_i^{n}(ds\,dy)\ i\geq 2.\label{eqn:stateProc5}
\end{align}
Then
\begin{equation}\label{eq:equivrepn}
\begin{aligned}
	X^n_1(t) & = Y^n_1(t) - \eta_1^n(t), \\
	X^n_i(t) & = Y^n_i(t) + \eta_{i-1}^n(t) - \eta_i^n(t), i \ge 2.
\end{aligned}
\end{equation}
Written in this manner, $X^n$ can be viewed as a solution of an infinite-dimensional Skorokhod problem as discussed in the next section.

\subsection{Skorokhod Problem}\label{sec:skorokhod}
We now introduce the  Skorokhod problem that is associated with the system of equations in the last section.
Consider an infinite matrix (namely a map from $\Nmb\times \Nmb$ to $\Rmb$), $R_\infty$, defined as
$$R_{\infty}(i,j) = -\one_{\{j=i\}} + \one_{\{j=i-1, i>1\}}, \mbox{ for } (i,j) \in \Nmb\times \Nmb.$$
Define $\Vmb \doteq (-\infty, 1]$ and consider a $M\in\NN$. 
Let $\Vmb^{\infty}$ (resp. $\Vmb^M$) denote the space of maps from $\Nmb$ (resp. $\{1,\ldots,M\}$) to $\Vmb$ which is equipped with the product topology.
The spaces $\Rmb^{\infty}$ and $\Rmb^M$ are similarly defined with $\Vmb$ replaced by $\Rmb$. 
Let $\Dmb^o_{\Rmb^{\infty}}$ be the subset of $\Dmb_{\Rmb^{\infty}}$ consisting of paths $\psi$ such that $\psi(0) \in \Vmb^{\infty}$.
\begin{definition}
	\label{def:Smap}
	Let $\psi \in \Dmb^o_{\Rmb^{\infty}}$. Then $(\phi,\eta) \in \Dmb_{\Vmb^{\infty} \times \Rmb^{\infty}}$ solves the Skorokhod problem (SP) for $\psi$ associated with  the reflection matrix $R_\infty$ if  the following hold:
	\begin{enumerate}[(i)]
	\item
		$\phi(t)=\psi(t)+R_\infty\eta(t)$ for all $t \in [0,T]$, namely 
		$$\phi_1(t)=\psi_1(t)-\eta_1(t),\;\; \phi_i(t)=\psi_i(t)+\eta_{i-1}(t)-\eta_i(t) \mbox{ for all } i \ge 2 \mbox{ and } t \in [0,T].$$
	\item For each $i \in \Nmb$,
		  (a) $\eta_i(0)=0$, (b) $\eta_i$ is nondecreasing, and (c) $\int_0^T \one_{\{\phi_i(s) < 1 \}}\,d\eta_i(s)=0$.
	\end{enumerate}
\end{definition}

On the domain $\newdomain \subset \Dmb^o_{\Rmb^{\infty}}$ on which there is a unique solution to the SP we define the Skorokhod map (SM) $\Gamma: \newdomain \to \Dmb_{\Vmb^{\infty}}$ as $\Gamma(\psi)=\phi$ if $(\phi,\eta)$ solves the  SP posed by $\psi$.
Also, define the map $\Gammabar \colon \newdomain \to \Dmb_{\Rmb^{\infty}}$ by $\Gammabar(\psi)=\eta$.
For $\psi, \tilde \psi \in \Dmb_{\Rmb^{\infty}}$, let
$$\|\psi - \tilde \psi\|_{\infty} \doteq \sum_{i=1}^\infty \frac{\|\psi_i-\tilde \psi_i\|_\infty }{2^i}.$$

The following result gives the wellposedness and regularity of the above infinite-dimensional Skorokhod problem.

\begin{lemma}
	\label{lem:SMap}
	The SP is well defined on all of $ \Dmb^o_{\Rmb^{\infty}}$ (namely $\newdomain= \Dmb^o_{\Rmb^{\infty}}$) and the SM is Lipschitz continuous in the following sense:
	For all $\psi,\psitil \in  \Dmb^o_{\Rmb^{\infty}}$,
	\begin{equation*}
		\|\Gamma(\psi)-\Gamma(\psi)\|_\infty \le 4 \|\psi-\psitil\|_\infty, \quad \|\Gammabar(\psi)-\Gammabar(\psi)\|_\infty \le 2 \|\psi-\psitil\|_\infty.
	\end{equation*}
\end{lemma}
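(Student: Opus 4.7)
The approach is to exploit the lower bidiagonal structure of $R_\infty$: row $i$ has nonzero entries only in columns $i-1$ (value $+1$, for $i\ge 2$) and $i$ (value $-1$), so the equations for $(\phi_i,\eta_i)$ involve only $\eta_j$ with $j\le i$. The plan is to decouple the infinite system into a recursive sequence of one-dimensional Skorokhod problems on $\Vmb=(-\infty,1]$ with upper barrier at $1$. Given $\eta_{i-1}$ (with the convention $\eta_0\equiv 0$), coordinate $i$ becomes the 1D SP with cadlag driver $\psi_i+\eta_{i-1}$, which has the classical explicit solution
\[
\eta_i(t) = \sup_{0 \le s \le t}\bigl(\psi_i(s) + \eta_{i-1}(s) - 1\bigr)^+, \qquad \phi_i(t) = \psi_i(t) + \eta_{i-1}(t) - \eta_i(t).
\]
Since $\psi(0)\in\Vmb^\infty$ and $\eta_{i-1}(0)=0$, the driver starts at a value $\le 1$, so the sup formula produces $\eta_i$ cadlag and nondecreasing with $\eta_i(0)=0$, together with $\phi_i$ cadlag satisfying $\phi_i\le 1$ and the complementarity relation $\int_0^T \one_{\{\phi_i(s)<1\}}\,d\eta_i(s)=0$. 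Uniqueness at each step is the standard uniqueness theorem for the 1D SP, so induction on $i$ yields a unique global solution $(\phi,\eta)\in\Dmb_{\Vmb^\infty\times\Rmb^\infty}$, giving $\newdomain=\Dmb^o_{\Rmb^\infty}$.

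For the Lipschitz estimates I introduce the three scalar sequences $a_i\doteq \|\psi_i-\psitil_i\|_\infty$, $b_i\doteq\|\eta_i-\etatil_i\|_\infty$, and $c_i\doteq\|\phi_i-\phitil_i\|_\infty$, with $b_0=0$. The 1D contraction property of the sup map gives $b_i \le \|(\psi_i+\eta_{i-1})-(\psitil_i+\etatil_{i-1})\|_\infty\le a_i + b_{i-1}$, and from the identity $\phi_i=\psi_i+\eta_{i-1}-\eta_i$ I obtain $c_i \le a_i + b_{i-1} + b_i$. Iterating the recursion for $b_i$ yields $b_i\le \sum_{j=1}^i a_j$ and hence $c_i\le 2\sum_{j=1}^i a_j$. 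Substituting into the weighted norm and swapping the order of summation via $\sum_{i\ge j} 2^{-i}= 2^{-(j-1)}$,
\[
\sum_{i=1}^\infty \frac{b_i}{2^i} \le \sum_{j=1}^\infty a_j\sum_{i\ge j}\frac{1}{2^i} = 2\sum_{j=1}^\infty \frac{a_j}{2^j} = 2\,\|\psi-\psitil\|_\infty,
\]
which is the claimed bound for $\Gammabar$. For $\Gamma$ I bound $\sum_i c_i/2^i$ by $\sum_i a_i/2^i + \sum_i b_{i-1}/2^i + \sum_i b_i/2^i$; the first sum equals $\|\psi-\psitil\|_\infty$, the second equals $\tfrac12\sum_{k\ge 0} b_k/2^k\le \|\psi-\psitil\|_\infty$ after a shift of index, and the third is at most $2\|\psi-\psitil\|_\infty$ by the bound just derived, totalling $4\|\psi-\psitil\|_\infty$.

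The construction is essentially routine once the recursive 1D reduction is in place, and cadlag regularity of the outputs follows immediately from the sup formula applied to a cadlag driver at each step. The only place that demands mild care is verifying that the telescoping in the weighted norm delivers exactly the constants $2$ and $4$ rather than larger values; this is purely a bookkeeping matter and I do not view it as a substantive obstacle.
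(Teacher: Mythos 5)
Your proof is correct and follows essentially the same route as the paper: reduce coordinate-by-coordinate to the one-dimensional Skorokhod problem with driver $\psi_i+\eta_{i-1}$, then propagate the standard 1D Lipschitz bounds through the weighted $\ell^1$ norm to obtain the constants $2$ and $4$. The only cosmetic difference is that you inline the explicit sup formula and induct directly, whereas the paper obtains wellposedness of each finite-dimensional projection by citing the Harrison--Reiman/Dupuis--Ishii theory and then invokes a consistency property; the Lipschitz computation is identical.
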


\begin{proof}
	We first prove uniqueness.
	Fix $\psi \in \Dmb^o_{\Rmb^{\infty}}$.
	Suppose there are two solutions $(\phi,\eta)$ and $(\phitil,\etatil)$.
	For each $M \in \Nmb$, define the matrix $R_M=-I_{M \times M} + P_M$, where $I_{M \times M}$ is the $M \times M$ identity matrix and $P_M(i,j)=\one_{\{j=i-1, i>1\}}$.
	Let $\psi^M, \phi^M, \tilde \phi^M, \eta^M, \tilde \eta^M \in \Dmb_{\Rmb^M}$ be defined as
	$$(\psi^M_i, \phi^M_i, \tilde \phi^M_i, \eta^M_i, \tilde \eta^M_i) = (\psi_i, \phi_i, \tilde \phi_i, \eta_i, \tilde \eta_i), \; i = 1, \ldots M.$$
	Then $(\phi^M, \eta^M)$ solves the $M$-dimensional Skorokhod problem for $\psi^M$ associated with the domain $\Vmb^M$ and reflection matrix $R_M$, namely
	\begin{align*}
		\phi^M &= \psi^M + R_M \eta^M, \; \phi^M \in \Dmb_{\Vmb^M},\\
		 \eta(0)&=0, \eta \mbox{ is right continuous and nondecreasing}, \; \int_{0}^T \one_{\{\phi_i(s)<1\}} d \eta_i(s) =0.
		 \end{align*}
	Furthermore, $(\tilde \phi^M, \tilde \eta^M)$ also solves the same $M$-dimensional Skorokhod problem for $\psi^M$. 
	Since $P_M$ has spectral radius less than $1$, it is well known from \cite{HarrisonReiman1981reflected,DupuisIshii1991lipschitz}  that this $M$-dimensional SP has a unique solution.
	Thus we must have $(\phi^M, \eta^M) = (\tilde \phi^M, \tilde \eta^M)$.
	Since $M \in \Nmb$ is arbitrary, we have the desired uniqueness property.

	Next we prove existence.
	The solutions of the finite dimensional SP have the following consistency property:
	Suppose for some  $M \in \Nmb$, $(\phi^M,\eta^M)$ is the solution to the $M$-dimensional SP for $\psi^M$ (associated with $(\Vmb^M, R_M)$) and let $1 \le m \le M$.
	Define $(\psi^{m,M}, \phi^{m,M}, \eta^{m,M}) \in \Dmb_{\Rmb^{3m}}$ as
	$$(\psi^{m,M}_i, \phi^{m,M}_i, \eta^{m,M}_i) = (\psi^{M}_i, \phi^{M}_i, \eta^{M}_i), \; i = 1, \ldots m.$$
	Then $(\phi^{m,M}, \eta^{m,M})$ solves the $m$-dimensional SP for $\psi^m$ (associated with $(\Vmb^m, R_m)$).
	Fix $\psi \in \Dmb^o_{\Rmb^{\infty}}$ and for $M \in \Nmb$ define $\psi^M$ as before. Let $(\phi^M, \eta^M)$ be the solution
	of   the $M$-dimensional SP for $\psi^M$. Define  $(\phi,\eta) \in   \Dmb_{\Rmb^{\infty}\times \Rmb^{\infty}}$ as 
	 $(\phi_n,\eta_n) \doteq (\phi_n^n, \eta^n_n)$ for $n \in \Nmb$. From the consistency property noted above, $(\phi,\eta)$ is a solution to the infinite-dimensional SP.
	This gives existence.

	Finally we prove the Lipschitz property. Fix $\psi, \tilde \psi \in \Dmb^o_{\Rmb^{\infty}}$ and 
	let $(\phi,\eta)$ and $(\phitil,\etatil)$ be solutions to the infinite-dimensional SP for $\psi$ and $\psitil$ respectively.
	For each $M \in \Nmb$,  since
	\begin{align*}
		\phi_M(t)&=\psi_M(t)+\eta_{M-1}(t)-\eta_M(t), \\
		\phitil_M(t)&=\psitil_M(t)+\etatil_{M-1}(t)-\etatil_M(t),
	\end{align*}
	where we define $\eta_0 = \etatil_0=0$,
	it follows that $(\phi_M,\eta_M)$ and $(\phitil_M,\etatil_M)$ are solutions to the one-dimensional SP
	(associated with the domain $\Vmb = (-\infty, 1]$) for $\psi_M+\eta_{M-1}$ and $\psitil_M+\etatil_{M-1}$, and hence
	\begin{align*}
		\|\eta_M-\etatil_M\|_\infty & \le \|(\psi_M+\eta_{M-1})-({\psitil}_M+{\etatil}_{M-1})\|_\infty \le \|\psi_M-\psitil_M\|_\infty + \|\eta_{M-1}-\etatil_{M-1}\|_\infty, \\
		\|\phi_M-\phitil_M\|_\infty & \le 2\|(\psi_M+\eta_{M-1})-({\psitil}_M+{\etatil}_{M-1})\|_\infty \le 2\|\psi_M-\psitil_M\|_\infty + 2\|\eta_{M-1}-\etatil_{M-1}\|_\infty.
	\end{align*}
	This means
	\begin{align*}
		\|\eta_M-\etatil_M\|_\infty & \le \sum_{i=1}^M \|\psi_i-\psitil_i\|_\infty, \quad \|\phi_M-\phitil_M\|_\infty \le 2\sum_{i=1}^M \|\psi_i-\psitil_i\|_\infty
	\end{align*}
	and hence
	\begin{align*}
		\|\eta-\etatil\|_\infty & = \sum_{k=1}^\infty \frac{\|\eta_k-\etatil_k\|_\infty}{2^k} \le \sum_{k=1}^\infty \sum_{i=1}^k \frac{\|\psi_i-\psitil_i\|_\infty}{2^k} = \sum_{i=1}^\infty \sum_{k=i}^\infty \frac{\|\psi_i-\psitil_i\|_\infty}{2^k} = \sum_{i=1}^\infty \frac{\|\psi_i-\psitil_i\|_\infty}{2^{i-1}} = 2\|\psi-\psitil\|_\infty.
		\end{align*}
		Similarly,
		\begin{align*}
		\|\phi-\phitil\| & = \sum_{k=1}^\infty \frac{\|\phi_k-\phitil_k\|_\infty}{2^k} \le 4\|\psi-\psitil\|_\infty.
	\end{align*}
	This completes the proof. 
\end{proof}

\begin{remark}
	\label{rem:remskor}
	It is easy to verify that if $\psi \in \Dmb^o_{\Rmb^{\infty}}$ is such that $\psi_i$ is continuous (resp. absolutely continuous) for each $i$
	and $\zeta = \Gamma(\psi)$, $\eta=\bar\Gamma(\psi)$, then $\zeta_i$ and $\eta_i$ are continuous (resp. absolutely continuous) for each $i$.
\end{remark}

\subsection{Rate Function}\label{sec:rateFunction}

For each $n \in \mathbb{N}$, $(X^n, Y^n)$ is a $\Dmb_{\mathbb{R}^{\infty}\times \mathbb{R}^{\infty}}$ valued random variable. In this work we  show that as $n\to \infty$, the sequence
$\{(X^n, Y^n)\}_{n\in \mathbb{N}}$ satisfies a LDP in the above space.  We begin by introducing the rate function that will govern this large deviation principle.

Recall that we assume $\lambda_n \to \lambda \in (0,\infty)$ as $n\to \infty$.
Fix $x \in \Vmb^{\infty}$ such that $x_i \ge 0$ for every $i$ and $\sum_{i=1}^{\infty} x_i <\infty$.
Let $\Cmc_T(x)$ be the subset of $\Cmb_{\Rmb^\infty \times \Rmb^\infty}$ consisting of all functions $(\zeta,\psi)$ such that
\begin{enumerate}[(i)]
\item
	$\zeta_i(0)=\psi_i(0)=x_i$. $\zeta_i$ and $\psi_i$ are absolutely continuous on $[0,T]$.
\item For all $t \in [0,T]$,
	$1=\zeta_0(t)\ge \zeta_1(t) \ge \zeta_{2}(t)\geq\ldots\geq 0$.
\item
	$\sup_{t \in [0,T]} \sum_{i=1}^\iy \zeta_i(t) < \infty$.
\item For some $\eta \in \Cmb_{\Rmb^{\infty}}$,
	$(\zeta, \eta)$ solves the Skorokhod problem for $\psi$ associated with the reflection matrix $R_\infty$. Namely, 
	\begin{equation}
		\label{eq:zeta_psi_eta}
		\zeta_i(t) = \psi_i(t) +\eta_{i-1}(t) - \eta_i(t), \; t \in [0,T],\; i\in\NN,
	\end{equation}
	where $\eta_0(t)=0$ and for every $i\ge 1$, $\eta_i(0)=0, \eta_i$ is non-decreasing, and $\int_0^T \one_{\{\zeta_i(s)<1\}} \, \eta_i(ds) = 0$.
\end{enumerate}

Note that property (iii) implies there exists a smallest $M = M(\zeta) \in \Nmb$ such that $$\sup_{t \in [0,T]} \zeta_M(t)<1.$$
Thus  one only needs to consider a $M$-dimensional SP for $\psi^M$ (associated with $(\Vmb^M, R_M)$).

\begin{remark}
	Indeed, an analogous $M$ exists for every realization of $X^n$.
	However, this $M$ depends on the random realization and there is no uniform $M$ that works for all realizations of $X^n$.
	As a result, it is convenient to work with the infinite-dimensional SP introduced in Section \ref{sec:skorokhod} when proving tightness and convergence properties in Section \ref{sec:varWeakCon}.
\end{remark}

We now define the rate function.
Recall $\ell(z) = z\log(z)-z+1$ for $z\ge 0$, and let $\vartheta_0 \doteq \lambda$ and $\vartheta_i \doteq 1$ for $i\in\NN$.
For $(\zeta,\psi) \notin \Cmc_T(x)$, define $I_{T,x}(\zeta,\psi) \doteq \infty$.
For $(\zeta,\psi) \in \Cmc_T(x)$, define
\begin{align}\label{eqn:JSQRateFunction}
	I_{T,x}(\zeta,\psi) \doteq \inf_{\varphi\in\cls_T(\zeta,\psi)}\left\{\sum_{i=0}^\iy\int_{[0,T]\times[0,1]}\vartheta_i\ell(\varphi_i(s,y))\,ds\,dy\right\},
\end{align}
where the set $\Smc_T(\zeta,\psi)$ consists of all $\varphi= (\varphi_i)_{i \in \Nmb_0}$, where each $\varphi_i: [0,T] \times [0,1] \to \Rmb_+$,
 such that
\begin{align}
	\psi_1(t) & = x_1 + \lambda \int_{[0,t]\times[0,1]}\varphi_0(s,y)\,ds\,dy  - \int_{[0,t]\times[0,1]}\one_{[0,\zeta_1(s)-\zeta_2(s))}(y) \varphi_{1}(s,y) \,ds\,dy, \label{eq:psi1}\\
	\psi_i(t) & = x_i - \int_{[0,t]\times[0,1]}\one_{[0,\zeta_i(s)-\zeta_{i+1}(s))}(y) \varphi_{i}(s,y) \,ds\,dy, \quad i \ge 2. \label{eq:psii}
\end{align}
Intuitively \eqref{eq:psi1}--\eqref{eq:psii} represents the ODE limit of the free process $Y^n$ with controls $\varphi$ modulating the rates at which jobs enter and leave the system.
Note that when $\varphi_i$ is taken to be $1$ for each $i$ in the above equations, $\{(\zeta_i,\psi_i)\}_{i\in\NN_0}$ correspond to the law of large numbers limit of the constrained and free processes $\{(X_i^n, Y^n_i)\}_{i\in\NN_0}$.
Clearly, with this choice of $\{\varphi_i\}_{i\in\NN_0}$, the cost on the right side of \eqref{eqn:JSQRateFunction} is zero which verifies that
the rate function evaluated at the LLN limit is $0$.
For a general pair $\{(\zeta_i,\psi_i)\}_{i\in\NN_0}$, the rate function is obtained by considering all controls $\{\varphi_i\}_{i\in\NN_0}$ that produce
the pair $\{(\zeta_i,\psi_i)\}_{i\in\NN_0}$ through the system of equations in \eqref{eq:psi1}--\eqref{eq:psii} and by then taking infimum over the cost for all such controls as on the right side of \eqref{eqn:JSQRateFunction}.

\subsection{Main Result}\label{sec:mainResult}

We now present the main result of this work.
First we introduce the following assumption on the initial values $X^n(0)$.

\begin{assumption}
	\label{assump:1}
	There exist a sequence of $x^n$ and $x$ in $\Vmb^\infty$ such that a.s.
	\begin{align*}
		\mbox{ for every } i \in \NN,\, X_i^n(0) = x_i^n \to x_i \mbox{ as } n \to \infty, \quad
		1 = x_0^n \ge x_1^n \ge x_2^n \ge \dotsb \ge 0, \quad
		\sup_{n\in\NN} \sum_{i=1}^\iy x_i^n<\iy.
	\end{align*}
\end{assumption}

\begin{remark}
	\label{rmk:summable}
	Note that Assumption \ref{assump:1} along with Fatou's lemma imply that $\sum_{i=1}^\iy x_i<\iy$.
\end{remark}

Assumption \ref{assump:1} will be taken to hold throughout this work and $\{x^n\}$ and $x$ as in Assumption \ref{assump:1} will be fixed. Thus we will not note this condition explicitly in our results and will suppress $x$ in the notation when writing $\Cmc_T(x)$ or $I_{T,x}$.

\begin{theorem}\label{thm:mainResult}
	The function $I_T$ defined in \eqref{eqn:JSQRateFunction} is a rate function on $\Dmb_{\Rmb^\infty \times \Rmb^\infty}$.
	The sequence $(X^n,Y^n)$ satisfies a large deviation principle on $\Dmb_{\Rmb^\infty \times \Rmb^\infty}$ with rate function $I_T$.
\end{theorem}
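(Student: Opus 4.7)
The plan is to establish the equivalent Laplace principle formulation: for every $G \in \CC_b(\Dmb_{\Rmb^\infty \times \Rmb^\infty})$,
\begin{equation*}
-\lim_n \frac{1}{n}\log \Emb e^{-n G(X^n,Y^n)} = \inf_{(\zeta,\psi) \in \Cmc_T} \{I_T(\zeta,\psi) + G(\zeta,\psi)\},
\end{equation*}
together with the statement that $I_T$ has compact sublevel sets. The starting tool is the variational representation for exponential functionals of PRM from \cite{budhiraja2011variational}, which rewrites $-\frac{1}{n}\log \Emb e^{-nG(X^n,Y^n)}$ as an infimum over controls $\varphi = (\varphi_i)_{i \in \Nmb_0}$ of $\Emb\bigl[\sum_{i\ge 0} \vartheta_i \int_{[0,T]\times[0,1]} \ell(\varphi_i(s,y))\,ds\,dy + G(\Xbar^n, \Ybar^n)\bigr]$, where $(\Xbar^n, \Ybar^n)$ are the controlled analogues of $(X^n,Y^n)$ obtained by replacing each PRM $D_k^{n\vartheta_k}$ in \eqref{eqn:stateProc1p}--\eqref{eqn:stateProc5} by $D_k^{n\vartheta_k\varphi_k}$. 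This reduces the LDP to an asymptotic analysis of controlled state processes.

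For the Laplace upper bound I would take $\frac{1}{n}$-optimal controls $\varphi^n$ in the variational expression. Superlinearity of $\ell$ together with the uniform cost bound forces uniform integrability of the $\varphi^n_i$, yielding tightness of $\varphi^n$ viewed as a random measure on $[0,T]\times[0,1]\times\Rmb_+$; tightness of $(\Xbar^n,\Ybar^n)$ then follows by estimating the martingale parts of the controlled PRM in \eqref{eqn:stateProc4}--\eqref{eqn:stateProc5} and invoking the Lipschitz continuity of the Skorokhod map (Lemma \ref{lem:SMap}). Any weak limit point $(\varphi,\zeta,\psi)$ must satisfy the controlled ODE \eqref{eq:psi1}--\eqref{eq:psii}, so $\varphi \in \cls_T(\zeta,\psi)$ and $(\zeta,\psi)\in \Cmc_T$. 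Combining Fatou's lemma, lower semicontinuity of the integrated $\ell$-cost, and continuity/boundedness of $G$ yields $\liminf_n -\frac{1}{n}\log \Emb e^{-nG} \ge \inf_{\Cmc_T}\{I_T + G\}$.

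The lower bound is the main obstacle. The natural attempt is: fix $\varepsilon>0$, choose $(\zeta^*,\psi^*)\in\Cmc_T$ that is $\varepsilon$-optimal for the right-hand side, choose $\varphi^*\in\cls_T(\zeta^*,\psi^*)$ that is $\varepsilon$-optimal for $I_T(\zeta^*,\psi^*)$, feed $\varphi^*$ into the controlled dynamics to construct $(\Xbar^n,\Ybar^n)$, and argue that the variational expression is bounded above by $I_T(\zeta^*,\psi^*)+G(\zeta^*,\psi^*)+O(\varepsilon)$. The same machinery as in the upper bound gives tightness and shows that any weak limit $(\bar\zeta,\bar\psi)$ solves \eqref{eq:psi1}--\eqref{eq:psii} with control $\varphi^*$. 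The obstacle is that this infinite system of controlled ODEs has discontinuous coefficients $\one_{[0,\zeta_i-\zeta_{i+1})}(y)$, diminishing rates as $\zeta_i-\zeta_{i+1}\to 0$, and unbounded dimension, all of which conspire to defeat any direct Gr\"onwall uniqueness argument; consequently one cannot immediately conclude $(\bar\zeta,\bar\psi)=(\zeta^*,\psi^*)$. My plan here is to execute the surgery encoded in Lemmas \ref{lem:uniqueness_preparation_2}--\ref{lem:uniqueness_preparation} and \ref{lem:uniqueness}: replace $(\zeta^*,\psi^*,\varphi^*)$ by a nearby triple $(\zeta,\psi,\varphi)$ with $\varphi \in \cls_T(\zeta,\psi)$ and cost close to $I_T(\zeta^*,\psi^*)$, such that only finitely many coordinates are active, $\zeta_i-\zeta_{i+1}$ is bounded below by a positive constant on the support of a non-trivial $\varphi_i$, and $\varphi$ has the regularity needed for Lipschitz continuity of the right-hand side of the modified ODE. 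On this surgically adjusted triple the system becomes finite-dimensional with Lipschitz coefficients and uniqueness holds, so the weak limit must coincide with $(\zeta,\psi)$, the costs converge, and letting $\varepsilon\downarrow 0$ yields the matching lower bound.

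To finish, the compactness of the sublevel sets $\{I_T \le C\}$ follows from the same tightness ingredients applied deterministically: given a sequence $(\zeta^n,\psi^n)$ in $\{I_T \le C\}$, near-optimal controls $\varphi^n \in \cls_T(\zeta^n,\psi^n)$ have uniformly bounded $\ell$-cost, hence the $\varphi^n$ are tight in the Orlicz sense, and the Skorokhod map Lipschitz estimate together with the monotonicity and summability constraints defining $\Cmc_T$ force $(\zeta^n,\psi^n)$ to be precompact in $\Cmb_{\Rmb^\infty\times\Rmb^\infty}$; lower semicontinuity of the integrated $\ell$-cost places the limit in the same sublevel set. Combining this with the Laplace upper and lower bounds completes the proof.
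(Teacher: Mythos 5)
Your proposal reproduces the paper's architecture: the Laplace-principle equivalence, the variational representation of \cite{budhiraja2011variational}, the tightness/limit-characterization machinery for the controlled processes (Lemmas \ref{lem:tightness}--\ref{lem:convergence}), the Fatou + lower-semicontinuity route for the upper bound, the identification of the uniqueness-of-controlled-ODE obstruction as the crux of the lower bound, and the compactness argument for sublevel sets. Up to that point the proposal and the paper coincide.

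Where you mischaracterize the argument is in the \emph{mechanism} by which the surgery of Lemmas \ref{lem:uniqueness_preparation_0}--\ref{lem:uniqueness_preparation_2}, \ref{lem:uniqueness} delivers uniqueness. You assert that after modification the controlled ODE becomes ``finite-dimensional with Lipschitz coefficients,'' with $\zeta_i-\zeta_{i+1}$ bounded away from zero wherever $\varphi_i$ is nontrivial, so that standard Gr\"onwall uniqueness applies. This is not what happens, and it could not: the right-hand side of \eqref{eq:psi1}--\eqref{eq:psii} retains the discontinuous thinning factor $\one_{[0,\zeta_i(s)-\zeta_{i+1}(s))}(y)$, and the modified controls of \eqref{eq:gap} are themselves piecewise constant in $y$ with jump discontinuities; nothing in the surgery makes the drift Lipschitz in the state. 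Moreover, the construction does \emph{not} bound $r_k=\zeta_k-\zeta_{k+1}$ below by a positive constant on the support of a nontrivial $\varphi_k$ — indeed $r_{c_i}(t)$ can vanish as $t$ approaches the endpoints of a piece where $\pi$ is constant, and at those times the dynamics are genuinely degenerate. What the $\varepsilon$-gap modification \eqref{eq:gap} actually achieves is a \emph{local insensitivity} property: it zeroes out $\varphi_k(t,y)$ for $y$ in an $\varepsilon$-neighbourhood of $r_k(t)$, so that a competing solution $\tilde\zeta$ with $\tilde r_k$ close to $r_k$ produces exactly the same drift increment. The actual uniqueness proof then forms $Y(t)=\sum_k|\tilde r_k(t)-r_k(t)|$ and shows by a sign analysis of $\Delta_k'(t)=\tilde\zeta_k'(t)-\zeta_k'(t)$ that $Y'(t)\le 0$, hence $Y\equiv 0$ on a short interval after the divergence time $\tau$, contradicting the choice of $\tau$. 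This is a one-sided (dissipativity) argument exploiting the structure of the JSQ drift, not a two-sided Lipschitz estimate, and the two are not interchangeable: a Lipschitz argument needs a modulus of continuity that the indicator-based drift simply does not have. If you tried to execute the plan as you wrote it, you would not be able to produce the Lipschitz constant. The rest of your outline — tightness, the Skorokhod-map Lipschitz estimate, the Aldous criterion, and the compact-sublevel-set argument — is sound and matches Sections \ref{sec:varWeakCon}, \ref{sec:UpperBound}, and \ref{sec:compactSets}.
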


\begin{proof}
	 From the equivalence between a LDP and a Laplace Principle (cf. Section 1.2 of \cite{dupuis2011weak}),  it suffices to establish the following three statements.
	\begin{enumerate}
	\item[(1)]
		Laplace Upper Bound: For all $G\in\CC_b(\Dmb_{\Rmb^\infty \times \Rmb^\infty})$, 
		\begin{align}\label{eqn:LaplaceUpperBound}
			\limsup_{n\to\iy}\frac{1}{n}\log\E e^{-nG(X^n,Y^n)} \leq -\inf_{(\zeta,\psi)\in \Cmc_T}\{I_T(\zeta,\psi)+G(\zeta,\psi)\}.
		\end{align}
	\item[(2)]
		Laplace Lower Bound: For all $G\in\CC_b(\Dmb_{\Rmb^\infty \times \Rmb^\infty})$,
		\begin{align}\label{eqn:LaplaceLowerBound}
			\liminf_{n\to\iy}\frac{1}{n}\log\E e^{-nG(X^n,Y^n)}\geq -\inf_{(\zeta,\psi)\in\Cmc_T}\{I_T(\zeta,\psi)+G(\zeta,\psi)\}.
		\end{align}
		\item[(3)] $I_T$ is a rate function, namely for each $M\in[0,\iy),\{(\zeta,\psi)\in \Cmc_T:I_T(\zeta,\psi)\leq M\}$ is compact.
	\end{enumerate}
	Statements (1) and (2) are proved in Sections \ref{sec:UpperBound} and \ref{sec:LowerBound}, respectively, while the
	 proof of the third
	  statement is given in Section \ref{sec:compactSets}. 
\end{proof}
The LDP given by the above theorem is useful in obtaining estimates for probabilities of various types of rare events in the JSQ system. We now consider one such example. Consider
the critical (heavy traffic) regime, namely $\lambda=1$.
Suppose all queues are of length 1 at time 0 (i.e.\ $X^n_1(0) = x_1 = 1$ and $X^n_i(0) = x_i =0$ for $i\geq 2$).
Consider the problem of estimating the probability that a queue length will be at least $j \ge 3$ at 
some time instant in the time interval $[0,T]$. This corresponds to estimating the probability of $(X^n, Y^n)$ taking values in the following (relatively) open set
\begin{align*}
	G_j = \left\{(\zeta,\psi)\in\bar{\clc}_T| \sup_{t\in[0,T]}\zeta_{j}(t)>0\right\},
\end{align*}
where
\begin{align*}
	\bar{\clc}_T = \{(\zeta,\psi)\in\clc_T|\zeta_1(0) = 1,\ \zeta_i(0)=0 \text{ for } i\geq 2\}.
\end{align*}
We also consider the closed set $F_j$, obtained by a slight enlargement of $G_j$, defined as follows
\begin{align*}
	{F}_j= \left\{(\zeta,\psi)\in\bar{\clc}_T| \sup_{t\in[0,T]}\zeta_{j-1}(t)=1\right\}.
\end{align*}
The following result characterizes the decay rate of probabilities of $G_j$, $F_j$ and shows that for large time intervals the probability of a  queue buildup of length $j$ or higher at any station decays exponentially, approximately, with rate $e^{-\frac{n(j-2)^2}{4T}}$.
\begin{theorem}
	\label{thm:largeqs}
	For every $j\ge 3$,
	\begin{align}
		\lim_{n\to\iy}\frac{1}{n}\log\PP(X^n\in G_j) & =
		\lim_{n\to\iy}\frac{1}{n}\log\PP(X^n\in F_j) \nonumber\\
		& = - T\ell\left( \frac{\frac{j-2}{T}+\sqrt{4+(\frac{j-2}{T})^2}}{2} \right) - T\ell\left( \frac{-\frac{j-2}{T}+\sqrt{4+(\frac{j-2}{T})^2}}{2} \right)\label{eq:eq332}
	\end{align}
	and
	\begin{equation*}
		\lim_{T \to \infty}\lim_{n\to\iy}\frac{T}{n}\log(\PP(X^n\in G_j)) = \lim_{T \to \infty}\lim_{n\to\iy}\frac{T}{n}\log(\PP(X^n\in {F}_j)) = -\frac{(j-2)^2}{4}.
	\end{equation*}
\end{theorem}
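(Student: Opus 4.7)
The plan is to combine Theorem \ref{thm:mainResult} with a careful variational analysis. Set $\alpha = (j-2)/T$, $r_\pm = (\pm\alpha + \sqrt{\alpha^2+4})/2$ (so $r_+ r_- = 1$ and $r_+ - r_- = \alpha$), and $c = T[\ell(r_+)+\ell(r_-)]$. The Laplace bounds from Theorem \ref{thm:mainResult}, applied to the open set $\{\sup_t \zeta_j > 0\}$ and the closed set $\{\sup_t \zeta_{j-1} \geq 1\}$, give
$$-\inf_{G_j}I_T \leq \liminf_n \tfrac{1}{n}\log\PP(X^n\in G_j) \leq \limsup_n \tfrac{1}{n}\log\PP(X^n\in F_j) \leq -\inf_{F_j}I_T,$$
once I verify the inclusion $\{X^n \in G_j\} \subseteq \{X^n \in F_j\}$ at the process level (which follows from JSQ: for any queue to attain length $j$, all queues must have length $\geq j-1$ just before the triggering arrival, so $X^n_{j-1} = 1$ then). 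It thus suffices to show both rate-function infima equal $c$, after which the first identity in \eqref{eq:eq332} follows.

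For the lower bound $\inf_{F_j}I_T \geq c$, I would fix $(\zeta,\psi) \in \Cmc_T$ with $\zeta_{j-1}(\tau) = 1$ for some $\tau \leq T$ and $\varphi \in \Smc_T(\zeta,\psi)$. By monotonicity $\zeta_i(\tau) = 1$ for $i \leq j-1$, so $\sum_{i\geq 2}\zeta_i(\tau) \geq j-2$. Writing $a(s) = \int_0^1 \varphi_0\,dy$, $b_i(s) = \int_0^{\zeta_i - \zeta_{i+1}}\varphi_i\,dy$, $B(s) = \sum_{i \geq 1} b_i(s)$, and $\Lambda = \{s \in [0,\tau] : \zeta_1(s) = 1\}$, a direct computation using the Skorokhod structure shows $\sum_{i \geq 2}\dot\zeta_i = a-B$ on $\Lambda$, while on $\Lambda^c$ one has $\eta_1$ not growing and $\dot\zeta_i = -b_i \leq 0$ for $i \geq 2$; integrating yields $\int_\Lambda (a-B)\,ds \geq j-2$. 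Two applications of Jensen (over $y \in [0,1]$ and then over the discrete distribution $w_i = \zeta_i - \zeta_{i+1}$, which sums to $\zeta_1 = 1$ on $\Lambda$) give the pointwise cost bound $\ell(a) + \ell(B)$ on $\Lambda$, hence $I_T \geq \int_\Lambda [\ell(a)+\ell(B)]\,ds$. Jensen in time combined with the fact that $c(\alpha) := \ell(r_+(\alpha)) + \ell(r_-(\alpha))$ is convex and increasing on $[0,\infty)$ (verified by $c'(\alpha) = \log r_+(\alpha) \geq 0$ and $c''(\alpha) = 1/\sqrt{\alpha^2+4} > 0$) gives $\int_\Lambda [\ell(a)+\ell(B)]\,ds \geq |\Lambda|\,c((j-2)/|\Lambda|)$. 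Finally, convexity of $c$ with $c(0)=0$ makes $\alpha \mapsto c(\alpha)/\alpha$ increasing, i.e., $L \mapsto L\,c((j-2)/L)$ decreasing, so $|\Lambda|\,c((j-2)/|\Lambda|) \geq T\,c(\alpha) = c$.

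For the upper bound $\inf_{G_j}I_T \leq c$, I would construct an explicit nearly-minimizing trajectory via the pointwise-optimal controls $\varphi_0 \equiv r_+$ and, for each $i \geq 1$, $\varphi_i(s,y) = r_-$ for $y \in [0, \zeta_i(s)-\zeta_{i+1}(s)]$ and $\varphi_i(s,y) = 1$ otherwise. A direct computation shows that these controls produce $\zeta_1 \equiv 1$ and during the ``phase'' when $\zeta_1 = \cdots = \zeta_{k-1} = 1$ and $0 < \zeta_k < 1$ one has $\dot\zeta_k = r_+ - r_- = \alpha$, so each level saturates in time $1/\alpha = T/(j-2)$ and the cascade reaches $\zeta_{j-1}(T) = 1$ at cost exactly $c$. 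A slight perturbation to $\varphi_0 = r_+ + \epsilon$, $\varphi_i = r_- - \epsilon$ accelerates the cascade enough that $\zeta_j$ becomes positive strictly before time $T$, placing the trajectory in $G_j$ at cost $c + O(\epsilon)$; letting $\epsilon \downarrow 0$ proves the upper bound. The second identity in the theorem follows from the Taylor expansions $r_\pm(\alpha) = 1 \pm \alpha/2 + \alpha^2/8 + O(\alpha^3)$ and $\ell(1+\delta) = \delta^2/2 + O(\delta^3)$, which give $c(\alpha) = \alpha^2/4 + O(\alpha^3)$ and hence $T \cdot c \to (j-2)^2/4$ as $T \to \infty$.

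The main obstacle I anticipate is handling the time set $\Lambda^c$ in the lower bound: one cannot bound the cost rate pointwise by $\ell(a) + \ell(B)$ where $\zeta_1 < 1$, because the weights $w_i$ then sum to $\zeta_1 < 1$ and the naive Jensen bound $\zeta_1 \ell(B/\zeta_1)$ can be strictly below $\ell(B)$. The resolution—observing that the cascade stalls on $\Lambda^c$ so that all useful mass-transfer activity is forced into $\Lambda$, and exploiting the monotonicity of $L \mapsto L\,c((j-2)/L)$—is what makes the argument robust to however much time the trajectory spends in $\{\zeta_1 < 1\}$.
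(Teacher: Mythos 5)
Your proposal is correct and follows the same overall architecture as the paper (LDP from Theorem \ref{thm:mainResult}, the inclusion argument via $F_j$, computation of $\inf_{F_j}I_T = \inf_{G_j}I_T$, and the $T\to\infty$ Taylor expansion). The upper-bound construction and the asymptotics are essentially identical to the paper's. However, your lower-bound argument for $\inf_{F_j}I_T \ge c$ is a genuinely different and cleaner route. The paper first performs two non-trivial trajectory modifications---delaying so that $\tau_{j-1}=T$, and then monotonizing so that every $\zeta_k$ is non-decreasing (each verified to preserve membership in $\Cmc_T$ and not increase cost)---and only afterwards applies Jensen over the $\tau_i$-intervals and convexity over $i$. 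You bypass both normalizations by working directly with the set $\Lambda=\{\zeta_1=1\}$, using the reflection structure to show the cascade identity $\int_\Lambda(a-B)\,ds \ge j-2$ (one should restrict to $\Lambda\cap[0,\tau]$ so that the cascade bookkeeping closes, but since $\ell\ge 0$ this only helps), applying Jensen three times (over $y$, over the weights $r_k$ which sum to $\zeta_1=1$ on $\Lambda$, and in time over $\Lambda$), and closing with the monotonicity of $L\mapsto Lc\left(\tfrac{j-2}{L}\right)$. In effect, the paper's normalizations are precisely what force $\Lambda=[0,T]$, and your version shows they are dispensable. You also correctly flag a point the paper's terse ``since $G_j\subset F_j$'' elides: the probability comparison $\Pmb(X^n\in G_j)\le\Pmb(X^n\in F_j)$ needs a sample-path cascade argument for $X^n$ (the trajectory-level inclusion $G_j\subset F_j$ in $\bar\clc_T$, which follows from the $\eta_{j-1}$-reflection structure, gives the rate-function comparison but not the event comparison). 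Your derivation of $c'(\alpha)=\log r_+(\alpha)$ and $c''(\alpha)=1/\sqrt{\alpha^2+4}$ and the resulting convexity/monotonicity facts are correct and give a cleaner final step than the paper's direct convexity manipulation of $\sqrt{1+4x^2}$ and $x\mapsto\ell(x+c)+\ell(x-c)$.
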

Proof of the above theorem is given in Section \ref{sec:examples}. 
\begin{remark}
	\label{rem:opttraj}
	The above result shows that the probability of having a queue of length $j$ or larger at any point in a time interval $[0,T]$ decays  exponentially at rate $e^{-nI(j,T)}$, where $I(j,T)$ is given by the expression in \eqref{eq:eq332}.
	In addition, the proof gives information on  how such an event is likely to occur. The optimal path given by \eqref{eq:optPath}  corresponds to the behavior that the number of jobs per queue increases linearly at rate $(j-2)/T$ until reaching the  queue length of $(j-1)$ at time $T$. Such information can also be used to design accelerated Monte-Carlo sampling algorithms for approximating related probabilities for any fixed sized system by, for example, drawing samples from a proposal distribution which places more weight on sample paths close to this trajectory (see e.g.\ \cite[Part IV]{BudhirajaDupuis2019analysis} and references therein).
\end{remark}
\begin{conjecture}
	\label{rem:otherevent}
	Similar techniques that are used to prove Theorem \ref{thm:largeqs} are expected to be useful for studying decay rates of other types of events as well.
	In particular, we make the following two conjectures. A rigorous analysis of the probability asymptotics of events in the two conjectures is left as an open problem.
	\begin{enumerate}[(1)]
	\item Consider the event that   at some time instant in $[0,T]$, some of the servers are busy with long queues, but the remaining are idle. 
	Such an event signals an undesirable inefficiency or lack of balance in the system. More precisely, the event of interest is 
	$$\{X_1^n(t)=X_2^n(t)= \cdots =X_j^n(t)=c, X^n_{j+1}(t)=0 \mbox{ for some } t \in [0,T]\}$$
	for some $j\ge 2$ and $c \in (0,1)$. This event corresponds to  $n(1-c)$ queues being idle while the remaining $nc$ queues being of length $j$. 
    We conjecture that the most likely manner in which this event occurs is that first, the system reaches a state with $nc$ queues of length $j$ and the remaining $n(1-c)$ queues of length $j-1$ at some time instant
	$t \in [0,T]$, and then those $nc$ queues remain in that state while the other $n(1-c)$ queues decrease down to zero.
	 This result will allow the identification of an optimal trajectory and a characterization of the decay rate of the probability of the above event.

	\item Consider the event that at some time instant in
	$[0,T]$, some of the servers have queues of length $j \ge 2$ while the remaining are of length $1$, namely 
	$$\{X_1^n(t)=1, X_2^n(t)= \cdots =X_j^n(t)=c, X^n_{j+1}(t)=0 \mbox{ for some } t \in [0,T]\}$$
	for some $j\ge 2$ and $c \in (0,1)$.
	This is similar to the first event but simpler to analyze.
	Once more, there is a natural guess for the most likely manner in which this event occurs (which is similar to the conjecture in (1), except that the other $n(1-c)$ queues decrease down to one instead of zero) using which one can identify the optimal trajectory in the path space whose cost characterizes the decay rate of the probability.
	We  conjecture that in this case the optimal trajectory is piecewise linear.
	\end{enumerate}
	
\end{conjecture}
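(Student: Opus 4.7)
The plan is to apply Theorem \ref{thm:mainResult} to reduce each of the two probability estimates to the deterministic variational problem
$V^{(k)}_{j,c,T} = \inf\{I_T(\zeta,\psi): (\zeta,\psi) \in \bar G^{(k)}_{j,c}\}$ for $k=1,2$, where $G^{(k)}_{j,c}\subset \Dmb_{\Rmb^\iy\times\Rmb^\iy}$ is the event in the conjecture viewed as a (relatively) open subset of $\bar\clc_T$. As in the proof of Theorem \ref{thm:largeqs}, I would match the open-set lower bound with the closed-set upper bound by a small enlargement of the target set (for example, replacing $\zeta_j(t)=c$ by $\zeta_j(t)\ge c-\delta$, etc.) and then letting the enlargement vanish.

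For both events the candidate minimizer is a two-phase trajectory split at a hit time $t^*\in(0,T]$. On $[0,t^*]$ (buildup) a time-dependent arrival control $\varphi_0$ together with identity service controls drive the system from $(1,0,0,\ldots)$ to the intermediate profile $\zeta^*=(\underbrace{1,\ldots,1}_{j-1},c,0,0,\ldots)$, representing $nc$ queues of length $j$ and $n(1-c)$ queues of length $j-1$. On $[t^*,T]$ (drain) one sets $\varphi_0\equiv 0$ to shut off arrivals, $\varphi_j\equiv 0$ on its active set $[0,\zeta_j(s)-\zeta_{j+1}(s))$ to freeze the mass at level $j$, and $\varphi_i\equiv 1$ for $1\le i<j$ so that the short queues drain under nominal service. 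In conjecture (1) the drain proceeds to length $0$; in conjecture (2) it stops at length $1$, so $\zeta_1\equiv 1$ during Phase 2 activates the reflection $\eta_1$ and the system \eqref{eq:psi1}--\eqref{eq:psii} collapses to a triangular linear ODE system, which is what produces the piecewise linearity predicted in the conjecture. Substituting these controls and using $\ell(0)=1$ yields a closed-form cost: Phase 2 contributes $(\lambda+c)(T-t^*)$, while Phase 1 contributes a cost that, by Euler--Lagrange and the mass-balance constraint of total accumulation $(j-1)+c$, is minimized by a constant $\varphi_0$ (exactly as in the proof of Theorem \ref{thm:largeqs}). A one-variable minimization over $t^*$ then produces $V^{(k)}_{j,c,T}$.

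The principal obstacle is the matching lower bound. To show that no other feasible trajectory in $\bar G^{(k)}_{j,c}$ has strictly smaller cost, one must rule out paths that deviate from the two-phase structure, for example trajectories that transiently visit higher levels $\zeta_{j+1},\zeta_{j+2},\ldots$ and then drain them, or that interleave arrivals and departures nonmonotonically. My plan is to start from an $\varepsilon$-optimal trajectory $(\zeta,\psi)\in \bar G^{(k)}_{j,c}$ and symmetrize it via a rearrangement argument: by convexity of $\ell$ and Jensen's inequality applied coordinate by coordinate, each control $\varphi_i$ can be replaced by a piecewise-constant time average at no larger cost; by the $2$-Lipschitz estimate for the Skorokhod map in Lemma \ref{lem:SMap}, the corresponding perturbation of the reflection terms $\eta$ is controlled. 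The delicate point is the infinite-dimensional coupling through the SP: one needs to argue that only finitely many coordinates (essentially $\zeta_1,\ldots,\zeta_{j+1}$) carry positive mass in a finite-cost trajectory near $t^*$, using the summability bound of Assumption \ref{assump:1} together with the Lipschitz estimate on $\eta$. Once this truncation is in place, the uniqueness-type machinery underlying Lemmas \ref{lem:uniqueness_preparation_2}--\ref{lem:uniqueness} can be adapted to identify the weak limit of any minimizing sequence with the candidate piecewise linear path, completing the identification $V^{(k)}_{j,c,T}=\text{(computed cost)}$.
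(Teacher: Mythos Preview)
The statement you are attempting to prove is \emph{Conjecture} \ref{rem:otherevent}, not a theorem: the paper explicitly says ``A rigorous analysis of the probability asymptotics of events in the two conjectures is left as an open problem.'' There is therefore no proof in the paper to compare your proposal against; the paper only records the heuristic two-phase picture you describe (and the guess that in case (2) the optimal trajectory is piecewise linear) and leaves its verification open.

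As a sketch toward resolving the conjecture, your plan is reasonable at the level of the upper bound (construct the two-phase candidate and compute its cost), but the details and the lower bound have genuine gaps. First, for case (2) your Phase~2 controls ($\varphi_0\equiv 0$, $\varphi_i\equiv 1$ for $1\le i<j$, $\varphi_j\equiv 0$) do not keep $\zeta_1\equiv 1$: with $\varphi_0=0$ and $\varphi_1=1$ one has $\psi_1'(t)=-(\zeta_1(t)-\zeta_2(t))<0$ as soon as $\zeta_2<1$, so $\zeta_1$ leaves $1$ and the reflection $\eta_1$ never activates. Moreover, a ``triangular linear ODE system'' with unit service controls yields exponential decay of $\zeta_2,\dots,\zeta_{j-1}$, not the piecewise-linear trajectory the conjecture predicts; the correct Phase~2 presumably needs either $\varphi_1\equiv 0$ on its active set or a nonzero $\varphi_0$ tuned to balance level-$1$ departures, and which of these is optimal must be argued, not assumed. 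Second, your lower-bound plan to ``adapt the uniqueness-type machinery underlying Lemmas \ref{lem:uniqueness_preparation_2}--\ref{lem:uniqueness}'' is a category error: those lemmas establish uniqueness of solutions to the controlled ODE \eqref{eq:psi1}--\eqref{eq:psii} for the purpose of the Laplace lower bound in the LDP, not optimality of a particular trajectory within a constrained variational problem. Ruling out competitors that visit $\zeta_{j+1},\zeta_{j+2},\dots$ or interleave arrivals and departures requires a genuine calculus-of-variations argument tailored to these target sets, and neither a Jensen averaging step nor the Lipschitz estimate of Lemma \ref{lem:SMap} obviously supplies it. This is precisely why the paper leaves the problem open.
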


\section{Representation and Weak Convergence of Controlled Processes}\label{sec:varWeakCon}
In this section we give several preparatory results that are needed for the proofs of both the upper and the lower bounds (i.e.  \eqref{eqn:LaplaceUpperBound} and \eqref{eqn:LaplaceLowerBound}).
Section \ref{sec:varRep} presents a variational representation from \cite{budhiraja2011variational} that is the starting point of our analysis.
In Section \ref{sec:tight} we prove tightness of certain families of controls and  controlled processes which arise from the variational representation of Section \ref{sec:varRep}.
Finally, Section \ref{sec:weakconv} presents a result which characterizes the distributional limit points of this collection of processes.

\subsection{Variational Representation}\label{sec:varRep}
Recall that $\bar{\newset}_+$ denotes the class of $(\bar{\clf}\otimes \clb([0, 1]))/\clb(\RR_+)$-measurable
maps from $\Om\times[0, T]\times [0, 1]$ to $\RR_+$.
For each $m\in\NN$ let
\begin{align*}
	\bar{\newset}_{b,m}
	&\doteq \{(\varphi_k)_{k\in\NN_0}:\varphi_k\in\bar{\newset}_+\text{ for all }k\in\NN_0, \text{  for all } (\omega,t,y)\in\Om\times[0,T]\times[0,1]\\
	&\qquad \frac{1}{m}\leq \varphi_k(\om,t,y)\leq m \text{ for }k\leq m\text{ and } \varphi_k(\om,t,y)=1 \text{ for } k>m\}
\end{align*}
and let $\bar{\newset}_b\doteq\cup_{m=1}^\iy\bar{\newset}_{b,m}$.
For any $\varphi\in\bar{\newset}_b$ we denote by $(\Xbar^{n,\varphi},\Ybar^{n,\varphi},\etabar^{n,\varphi})$ the controlled analogues of $(X^n,Y^n,\eta^n)$ obtained by replacing the PRMs in \eqref{eqn:stateProc1p}--\eqref{eqn:stateProc5} with controlled point processes, $D_0^{n\lambda_n\varphi_0}$ and $D_i^{n\varphi_i}, i\in\NN$.
Namely, 
 the state evolution equations for the controlled processes are as follows,
\begin{equation}\label{eq:maincontproc}
\begin{aligned}
	\bar Y^{n, \varphi}_1(t) & = x^n_1 + \frac{1}{n} \int_{[0,t]\times[0,1]} D_{0}^{n\lambda_n\varphi_0}(ds\,dy) - \frac{1}{n} \int_{[0,t]\times[0,1]} \one_{[0,\Xbar^n_1(s-)-\Xbar^n_2(s-))}(y) 
	D_1^{n\varphi_1}(ds\,dy), \\
	\bar Y^{n, \varphi}_i(t) & = x^n_i - \frac{1}{n} \int_{[0,t]\times[0,1]} \one_{[0,\bar X^n_i(s-)-\bar X^n_{i+1}(s-))}(y) D_i^{n\varphi_i}(ds\,dy),\ i\geq 2, \\
	\bar X^{n, \varphi}_1(t)  &= \bar Y^{n, \varphi}_1(t) - \bar \eta^{n, \varphi}_1(t), \; \;
	\bar X^{n, \varphi}_i(t)  = \bar Y^{n, \varphi}_i(t) + \bar \eta^{n, \varphi}_{i-1}(t) - \bar \eta^{n, \varphi}_i(t), i \ge 2,\\
	\bar \eta^{n, \varphi}_i(t) & = \frac{1}{n} \int_{[0,t]\times[0,1]} \one_{\{\bar X^{n, \varphi}_i(s-) = 1\}} D_{0}^{n\lambda_n\varphi_0}(ds\,dy),\ i\geq 1.
\end{aligned}
\end{equation}
When it is clear from context which controls are being used we may simply write $(\Xbar^{n},\Ybar^{n},\etabar^{n})$ to represent the controlled processes.

Let $\vartheta^n_0 \doteq\lambda_n$ and $\vartheta^n_i \doteq 1$ for $i\in\NN$.
The following variational representation will be instrumental in proving both \eqref{eqn:LaplaceUpperBound} and \eqref{eqn:LaplaceLowerBound}.
For a proof we refer the reader to  \cite[Theorem 2.1]{budhiraja2011variational}. We remark that the representation in \cite{budhiraja2011variational} is given for the setting of a single PRM.
	However the result given in the lemma below, which is formulated in terms of a countable collection of PRM, follows immediately  on considering a single PRM on the augmented space $[0,T]\times[0,1]\times \Rmb_+ \times \NN_0$ with intensity $\leb\otimes\varrho$ where $\leb$ is the Lebesgue measure on $[0,T]\times [0,1]\times \Rmb_+$ and $\varrho$ is the counting measure on $\NN_0$. Proof is omitted.
\begin{lemma}\label{lem:varRep}
	Let $G\in\CC_b(\Dmb_{\Rmb^\infty \times \Rmb^\infty})$. Then
	\begin{align}\label{eqn:variationalRep}
		\begin{split}
		-\frac{1}{n} \log \E e^{-nG(X^n,Y^n)} &= \inf_{\varphi^n\in\bar{\newset}_b} \E \left\{\sum_{i=0}^\infty \int_{[0,T]\times[0,1]}\vartheta^n_i \ell(\varphi_i^n(s,y))\,ds\,dy + G(\Xbar^n,\Ybar^n) \right\}.
		\end{split}
	\end{align}
\end{lemma}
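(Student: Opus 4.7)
The plan is to reduce this to the single-PRM variational representation of \cite{budhiraja2011variational}, exactly as the authors indicate. First, bundle the independent family $\{D_k\}_{k \in \NN_0}$ into a single PRM $\widetilde D$ on the augmented space $\widetilde \SSS \doteq [0,T] \times [0,1] \times \Rmb_+ \times \NN_0$ by setting $\widetilde D(A \times \{k\}) \doteq D_k(A)$. Independence of the $D_k$ together with their common Lebesgue intensity guarantee that $\widetilde D$ is a PRM on $\widetilde \SSS$ with intensity $\leb \otimes \varrho$, where $\varrho$ is counting measure on $\NN_0$, and the filtration $\{\clf_t\}$ serves as the predictable structure on the enlarged space.

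Second, rewrite the state evolution \eqref{eqn:stateProc1p}--\eqref{eqn:stateProc5} as a measurable map $F^n(\widetilde D) = (X^n, Y^n)$ by reading off each $D_k$ from the $k$-slice of $\widetilde D$, and apply \cite[Theorem 2.1]{budhiraja2011variational} to $G \circ F^n$ with intensity measure $\rho^n \cdot (\leb \otimes \varrho)$, where $\rho^n(0) \doteq n \lambda_n$ and $\rho^n(k) \doteq n$ for $k \geq 1$. Identifying any predictable control $\widetilde \varphi \colon \Omega \times \widetilde \SSS \to \Rmb_+$ with a sequence via $\varphi_i^n(s,y) \doteq \widetilde \varphi(s,y,i)$, Tonelli decouples the cost on $\widetilde \SSS$ as
$$n \sum_{i=0}^\iy \int_{[0,T] \times [0,1]} \vartheta_i^n \ell(\varphi_i^n(s,y)) \, ds \, dy,$$
while the controlled noise restricted to the $k$-slice is exactly $D_k^{n \vartheta_k^n \varphi_k^n}$. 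Hence $F^n$ applied to the controlled noise reproduces the controlled processes $(\Xbar^{n,\varphi^n}, \Ybar^{n,\varphi^n})$ of \eqref{eq:maincontproc}, and dividing by $n$ yields the identity in \eqref{eqn:variationalRep} except that the infimum is a priori over \emph{all} nonnegative predictable controls rather than just $\bar{\newset}_b$.

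Third, I would show that restricting to $\varphi^n \in \bar{\newset}_b$ does not change the value of the infimum. The direction ``$\leq$'' is immediate. For ``$\geq$'', given any admissible $\widetilde \varphi$ with finite cost, I would approximate by $\varphi_i^{(m)} \doteq (\varphi_i \wedge m) \vee (1/m)$ for $i \leq m$ and $\varphi_i^{(m)} \equiv 1$ for $i > m$. Nonnegativity of $\ell$ together with a monotone/dominated convergence argument yield convergence of the cost to that of $\widetilde \varphi$ as $m \to \infty$. For the $G$ term, the Lipschitz continuity of the Skorokhod map (Lemma \ref{lem:SMap}), boundedness of $G$, and the summability of the initial configuration from Assumption \ref{assump:1} and Remark \ref{rmk:summable} together yield convergence of $(\Xbar^{n, \varphi^{(m)}}, \Ybar^{n, \varphi^{(m)}})$ to $(\Xbar^{n, \varphi}, \Ybar^{n, \varphi})$ in $\Dmb_{\Rmb^\iy \times \Rmb^\iy}$.

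The main obstacle is precisely this third step: one must verify that truncating the tail coordinates of the control to $1$ produces only a small perturbation of the infinite-dimensional state pair. This is the one place where the JSQ state descriptor genuinely enters; everything else is bookkeeping on top of the cited single-PRM representation.
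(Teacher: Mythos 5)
Your route is the same as the paper's: bundle the independent PRMs $\{D_k\}$ into one PRM on the augmented space $[0,T]\times[0,1]\times\Rmb_+\times\NN_0$ and invoke \cite[Theorem 2.1]{budhiraja2011variational}. The paper declares the result immediate and omits the proof, so fleshing out the first two steps is useful bookkeeping and your handling of the intensity weights $\vartheta_k^n$ is fine.

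Your third step, however, is unnecessary and introduces a gap you then have to paper over. The representation in \cite[Theorem 2.1]{budhiraja2011variational} is already stated with the infimum over the class $\bar{\Amc}_b = \bigcup_m \bar{\Amc}_{b,m}$, where $\bar{\Amc}_{b,m}$ consists of predictable controls bounded in $[1/m,m]$ on an exhausting sequence of compacts in the mark space and equal to $1$ off the $m$-th compact. Taking the mark space to be $[0,1]\times\NN_0$ with compacts $[0,1]\times\{0,\dots,m\}$, this class translates \emph{exactly} to the paper's $\bar{\newset}_b$ after identifying $\varphi_i^n(s,y)=\widetilde\varphi(s,y,i)$ -- both the two-sided bound $1/m\le\varphi_k\le m$ for $k\le m$ and the condition $\varphi_k\equiv 1$ for $k>m$ are built in. So the infimum you obtain by applying the cited theorem is already over $\bar{\newset}_b$; there is no ``all nonnegative predictable controls'' version to be narrowed. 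If you were to insist on such an approximation, the cost-convergence half is fine (truncation and tail-reset to $1$ only decrease $\ell$, so monotone convergence gives the limit), but the claimed convergence of $(\Xbar^{n,\varphi^{(m)}},\Ybar^{n,\varphi^{(m)}})$ to $(\Xbar^{n,\varphi},\Ybar^{n,\varphi})$ is genuinely nontrivial: the controlled PRMs $D_k^{n\vartheta_k^n\varphi_k^{(m)}}$ change with $m$, and passing that through the infinitely many coupled integral equations and the infinite-dimensional Skorokhod map requires the kind of careful tightness and limit-identification argument done in Sections \ref{sec:tight}--\ref{sec:weakconv}, not a one-line appeal to Lemma \ref{lem:SMap} and Assumption \ref{assump:1}. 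Fortunately you do not need it: recognize that $\bar{\Amc}_b \leftrightarrow \bar{\newset}_b$ and the lemma follows from your first two steps alone.
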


\subsection{Tightness}\label{sec:tight}
In this section we prove a key tightness result which says that if the costs are appropriately bounded then the corresponding collection of controls and controlled processes is tight.
We begin by describing the topology on the space of controls.
For $M \in (0,\infty)$, denote by $S_M$ the collection of all  $h= \{h_k\}_{k\in \Nmb_0}$, where $h_k: [0,T]\times [0,1] \to \Rmb_+$ for each $k \in \Nmb_0$ and 
$$
\sum_{k=0}^\iy \int_{[0,T]\times[0,1]} \ell(h_{k}(s,y))\,ds\,dy\leq M.$$
Any $h_k$ as above can be identified with a finite measure $\nu^{h_k}$ on $[0,T]\times [0,1]$ by the following relation
$$\nu^{h_k}(\GG) \doteq \int_{\GG} h_k(s,y)\,ds\,dy, \; \GG \subset \clb([0,T]\times [0,1]).$$
The space $\Mmb$ of finite measures on $[0,T]\times[0,1]$ is equipped with the weak convergence topology and the space $\Mmb^{\infty}$ is equipped with the corresponding product topology.
Using the above identification, each element in $S_M$ can be mapped to an element of the Polish space $\Mmb^{\infty}$ and the space $S_M$ with the inherited topology is compact (see \cite[Lemma A.1]{budhiraja2013large}).

We record the following elementary lemma for future use. 
 See  \cite[{Lemma 3.2}]{BhamidiBudhirajaDupuisWu2019rare} for a proof.
\begin{lemma}
	\label{lem:ellProp}
	Let $\ell(x) = x\log(x)-x+1$. Then the following properties hold for $\ell(x)$:
	\begin{enumerate}
	\item[a)]
		For each $\beta>0$, there exists $\gamma(\beta)\in(0,\iy)$ such that $\gamma(\beta)\to0$ as $\beta\to\iy$ and $x\leq \gamma(\beta)\ell(x)$, for $x\geq\beta\geq 1$.
	\item[b)]
		For $x\geq0$, $x\leq\ell(x)+2$.
	\end{enumerate}
\end{lemma}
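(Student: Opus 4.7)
The plan is to treat (b) first as a one-variable calculus exercise and then handle (a) by analyzing the asymptotics of $x/\ell(x)$.

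For part (b), define $f(x) \doteq \ell(x)+2-x = x\log x - 2x + 3$ on $[0,\infty)$, with the convention $0\log 0 = 0$. Note $f(0)=3>0$ and $f'(x) = \log x - 1$, which vanishes only at $x=e$; since $f''(x) = 1/x > 0$, this is the unique minimum on $(0,\infty)$. The minimal value is $f(e) = e - 2e + 3 = 3 - e > 0$. Hence $f(x) \ge 3 - e > 0$ for all $x \ge 0$, which gives the desired bound $x \le \ell(x) + 2$.

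For part (a), the idea is that $\ell(x) \sim x\log x$ as $x\to\infty$, so $x/\ell(x) \sim 1/\log x \to 0$. More precisely, for $x > 1$ one has
\[
\frac{x}{\ell(x)} = \frac{1}{\log x - 1 + 1/x},
\]
and the denominator increases to $+\infty$ as $x \to \infty$. Set $g(x) \doteq x/\ell(x)$ on $(1,\infty)$, and define $\gamma(\beta) \doteq \sup_{x \ge \beta} g(x)$ for $\beta > 1$ (for $\beta \in (0,1]$ one can simply take $\gamma(\beta)$ to be any finite value large enough to make the bound vacuously useful, since the statement is only informative for large $\beta$). Continuity of $g$ on $(1,\infty)$ together with $\lim_{x\to\infty} g(x) = 0$ shows that $\gamma(\beta)$ is finite for every $\beta > 1$, and monotonicity of the supremum plus this same limit force $\gamma(\beta) \to 0$ as $\beta \to \infty$. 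By construction $x \le \gamma(\beta)\ell(x)$ for all $x \ge \beta$, which is what was claimed.

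There is no real technical obstacle here: both statements reduce to elementary monotonicity/asymptotic arguments for the function $\ell$. The only minor point requiring care is handling the singularity of $g$ at $x=1$ (where $\ell$ vanishes); restricting attention to $\beta > 1$ avoids this, and this is the only regime needed for the use of the lemma in the large deviation estimates that follow.
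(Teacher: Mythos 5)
Your part (b) argument is correct: $f(x)\doteq\ell(x)+2-x=x\log x-2x+3$ is convex on $(0,\infty)$ with unique minimizer $x=e$ and $f(e)=3-e>0$, so $x\le\ell(x)+2$ for all $x\ge0$ (indeed with the sharper constant $e-1$ in place of $2$).

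For part (a) your reasoning is sound for $\beta>1$. Writing $g(x)\doteq x/\ell(x)=1/(\log x-1+1/x)$ for $x>1$, the denominator $h(x)=\log x-1+1/x$ has $h(1)=0$ and $h'(x)=(x-1)/x^2>0$, so $h$ increases from $0$ to $\infty$ on $(1,\infty)$; hence $g$ is finite, strictly decreasing, and tends to $0$ there, and $\gamma(\beta)\doteq\sup_{x\ge\beta}g(x)=g(\beta)$ has exactly the claimed properties with $x\le\gamma(\beta)\ell(x)$ for $x\ge\beta>1$. The one slip is in your parenthetical about $\beta\in(0,1]$: for $\beta<1$ the constraint $x\ge\beta\ge1$ is indeed never satisfied, but at $\beta=1$ it \emph{is} satisfied by $x=1$, where $\ell(1)=0$ while the left side equals $1$, so no finite $\gamma(1)$ can make the bound hold---''taking $\gamma(\beta)$ large enough'' does not repair this. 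This is really an imprecision in the lemma statement as printed (one should read $x\ge\beta>1$), and it has no bearing on the lemma's uses in the paper, which always send $\beta\to\infty$. For context, the paper itself does not prove this lemma but refers to \cite[Lemma 3.2]{BhamidiBudhirajaDupuisWu2019rare}; your elementary calculus argument is the natural and expected one, so there is no competing internal proof to compare against.
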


The following is the main tightness result of this section. As a convention, we will take $\bar X^n_0(t) =\bar Y^n_0(t) =1$ for all $t \in [0,T]$.
\begin{lemma}\label{lem:tightness}
	Suppose that $\{\varphi^n\}$ is a sequence in $\bar\newset_b$ such that for some $M_0 \in (0,\infty)$
	\begin{align}
		 \sup_{n\in\NN}\sum_{k=0}^\iy\int_{[0,T]\times[0,1]} \ell(\varphi^n_{k}(s,y))\,ds\,dy\leq M_0  \mbox{ a.s. }\label{eqn:controlBound}
	\end{align}
	Denote by $(\bar X^n, \bar Y^n, \bar \eta^n)$ the controlled processes associated with $\varphi^n$, given by \eqref{eq:maincontproc} (replacing $\varphi$ with $\varphi^n$).
 Then, regarding $\varphi^n$ as a $S_{M_0}$ valued random variable, the sequence
	$\{(\Xbar^n,\Ybar^n,\etabar^n,\varphi^n)\}_{n\in\NN_0}$ is tight in $\DD_{\Rmb^{\infty} \times \Rmb^{\infty} \times \Rmb^{\infty}} \times S_{M_0}$. 
	Furthermore the collection $\{(\Xbar^n,\Ybar^n,\etabar^n)\}_{n\in\NN_0}$ is $\CC$-tight. 
\end{lemma}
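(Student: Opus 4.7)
The plan is to handle the control sequence and the state processes separately, exploiting the variational cost bound \eqref{eqn:controlBound}. Tightness of $\{\varphi^n\}$ as $S_{M_0}$-valued random variables is immediate since, by \eqref{eqn:controlBound}, each $\varphi^n$ lies in the compact set $S_{M_0}$ almost surely (see the remark preceding Lemma \ref{lem:ellProp}). For the processes $(\bar X^n, \bar Y^n, \bar \eta^n)$, since the product topology on $\Dmb_{\Rmb^\infty}$ reduces tightness to coordinatewise tightness in $\Dmb_\Rmb$, I would argue one coordinate at a time using the representation \eqref{eq:maincontproc} together with standard Aldous-type estimates.

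For each coordinate $i$, I would split each integral with respect to $D_k^{n\varphi^n_k}$ into its compensator and the corresponding local martingale. The compensators are absolutely continuous, and their total variation over $[0,T]\times[0,1]$ is bounded by $\int_{[0,T]\times[0,1]} \varphi^n_k(s,y)\,ds\,dy \le M_0 + 2T$ via Lemma \ref{lem:ellProp}(b). The martingales have predictable quadratic variation of order $\tfrac{1}{n}\int \varphi^n_k \le \tfrac{M_0+2T}{n}$ and so, by Doob's inequality, converge to zero uniformly on $[0,T]$ in $L^2$. Containment in compact subsets of $\Dmb_\Rmb$ at each fixed time thus follows from the bounded compensator, and the oscillation requirement of Aldous's criterion reduces to showing that, for stopping times $\sigma_n \le \tau_n \le \sigma_n + \delta_n$ with $\delta_n \downarrow 0$,
\begin{equation*}
	\int_{(\sigma_n,\tau_n]\times[0,1]} \varphi^n_k(s,y)\,ds\,dy \longrightarrow 0 \text{ in probability.}
\end{equation*}

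The main obstacle is that $\varphi^n_k$ is not uniformly bounded in $n$, so the cost must be used in an essential way; this is where the variational structure enters. The standard remedy is a splitting: for $\beta \ge 1$, decompose $\varphi^n_k = \varphi^n_k \one_{\{\varphi^n_k \le \beta\}} + \varphi^n_k \one_{\{\varphi^n_k > \beta\}}$. The first summand contributes at most $\beta\delta_n$ to the integral, while by Lemma \ref{lem:ellProp}(a) the second is dominated by $\gamma(\beta)\int \ell(\varphi^n_k) \le \gamma(\beta) M_0$. Sending $\beta \to \infty$ first (so $\gamma(\beta)\to 0$) and then $\delta_n\to 0$ delivers the oscillation bound. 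Tightness of $\bar Y^n$ and $\bar \eta^n$ in $\Dmb_{\Rmb^\infty}$ follows, and since every jump of every coordinate has magnitude $1/n \to 0$, every weak subsequential limit has continuous sample paths, giving the asserted $\CC$-tightness. Finally, $\CC$-tightness of $\bar X^n$ follows directly from the identity $\bar X^n_i = \bar Y^n_i + \bar \eta^n_{i-1} - \bar \eta^n_i$, or alternatively from the Lipschitz property of the infinite-dimensional Skorokhod map established in Lemma \ref{lem:SMap}.
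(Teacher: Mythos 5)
Your proposal is correct and follows essentially the same route as the paper: compactness of $S_{M_0}$ for the controls, coordinatewise Aldous estimates for $\bar Y^n$ using the $\beta$-splitting of $\varphi^n_k$ via Lemma~\ref{lem:ellProp}, vanishing jump sizes for $\CC$-tightness, and the Lipschitz property of the Skorokhod map (Lemma~\ref{lem:SMap}) to carry the conclusion over to $(\bar X^n, \bar\eta^n)$. The only cosmetic difference is that you spell out the martingale/compensator decomposition and invoke Doob's inequality, whereas the paper bounds $\E|\bar Y^n_i(\tau+\delta)-\bar Y^n_i(\tau)|$ directly through the intensity of the driving point processes; these are equivalent, and the paper defers the explicit martingale $L^2$ estimate to the limit-characterization step (Lemma~\ref{lem:convergence}).
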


\begin{proof}
	Since $S_{M_0}$ is compact the tightness of $\{\varphi^n\}_{n \in \Nmb_0}$ is immediate.  From Lemma \ref{lem:SMap} and since $(\bar X^n, \bar \eta^n) = (\Gamma(\bar Y^n), \bar\Gamma(\bar Y^n))$, it suffices
	now to show that $\{\Ybar^n\}_{n\in\NN}$ is $\CC$-tight in $\DD_{\Rmb^{\infty}}$.

	In order to verify tightness of $\{\bar{Y}^n\}_{n\in\NN}$ we appeal to Aldous' tightness criteria (cf. Theorem 2.2.2 in \cite{joffe1986weak}) for each $\{\Ybar^n_i\}_{n\in\NN},\ i \in \Nmb$.
	The tightness of $\{\bar{Y}^n_i(t)\}_{n\in\NN}$ in $\RR$ for each $t\in[0,T]$ follows from the following estimate.
	\begin{align*}
		\limsup_{n \to\iy} \E|\bar{Y}_i^n(t)|
		& \leq \limsup_{n\to \iy} x_i^n + \limsup_{n \to \iy} \E\int_{[0,t]\times[0,1]}[\lambda_n\varphi^n_{0}(s,y)+\varphi^n_{i}(s,y)]\,ds\,dy \\
		& \le x_i +	 (\lambda+1)M_0 + 2(\lambda+1)T,
	\end{align*}
	where the last inequality uses \eqref{eqn:controlBound}, Lemma \ref{lem:ellProp}(b), and Assumption \ref{assump:1}. 
	We next verify the following condition on the fluctuations of $\{\bar{Y}^n\}_{n\in\NN}$,
	\begin{equation}\label{eq:aldkurflu}
		\limsup_{\del\to0}\limsup_{n\to\iy}\sup_{\tau\in\clt^\del}\E[|\bar{Y}_i^n(\tau+\del)-\bar{Y}_i^n(\tau)|]=0,
	\end{equation}
	where $\clt^\del$ is the set of all $[0,T-\del]$-valued stopping times.
	For any $L>1$, it follows from the definition of $\Ybar^n$, the triangle inequality, Lemma \ref{lem:ellProp}(a), and \eqref{eqn:controlBound} that
	\begin{align*}
		&\E[|\bar{Y}_i^n(\tau+\del)-\bar{Y}_i^n(\tau)|]
		\leq \lambda_n\E\int_{[\tau,\tau+\del]\times[0,1]}\varphi^n_{0}(s,y)\,ds\,dy
		+\E\int_{[\tau,\tau+\del]\times[0,1]}\varphi^n_{i}(s,y)\,ds\,dy\\
		&\qquad = \lambda_n\E\int_{[\tau,\tau+\del]\times[0,1]}\varphi^n_{0}(s,y)\one_{\{\varphi^n_{0}(s,y)\leq L\}}\,ds\,dy+\E\int_{[\tau,\tau+\del]\times[0,1]}\varphi^n_{i}(s,y)\one_{\{\varphi^n_{i}(s,y)\leq L\}}\,ds\,dy\\
		&\qquad\quad + \lambda_n\E\int_{[\tau,\tau+\del]\times[0,1]}\varphi^n_{0}(s,y)\one_{\{\varphi^n_{0}(s,y)>L\}}\,ds\,dy+\E\int_{[\tau,\tau+\del]\times[0,1]}\varphi^n_{i}(s,y)\one_{\{\varphi^n_{i}(s,y)> L\}}\,ds\,dy\\
		&\qquad \leq (\lambda_n+1)\del L + (\lambda_n+1)M_0\gamma(L)
	\end{align*}
	and thus
	\begin{align*}
		\limsup_{\del\to0}\limsup_{n\to\iy}\sup_{\tau\in\clt^\del}\E[|\bar{Y}_i^n(\tau+\del)-\bar{Y}_i^n(\tau)|]
		\leq (\lambda+1)M_0\gamma(L).
	\end{align*}
	The property in \eqref{eq:aldkurflu} now follows upon sending $L\to\iy$. This proves the tightness of $\{Y^n\}_{n \in \Nmb}$.
	Finally, $\CC$-tightness follows upon noting that  jump sizes of $Y^n_i$ are bounded by $\frac{2}{n}$.
\end{proof}

\subsection{Characterization of Limit Points}\label{sec:weakconv}
Suppose that $\{\varphi^n\}$ is a sequence as in Lemma \ref{lem:tightness}. Then from the lemma we have the tightness of the vector sequence $\{(\Xbar^n,\Ybar^n,\etabar^n,\varphi^n)\}_{n\in\NN_0}$. In this section we 
characterize the limit points of this sequence.
It will be convenient to consider the following compensated point processes
\begin{align}\label{eqn:compensatedDef}
	\bar{D}^{n \vartheta_k^n\varphi_k^n}_k(ds\,dy)
	\doteq D^{n \vartheta_k^n\varphi_k^n}_k(ds\,dy) - n\vartheta_k^n\varphi_k^n(s,y)\,ds\,dy, \; n \in \NN, \; k \in \NN_0.
\end{align}
Define compensated processes $\ti B^n$ as
\begin{align}
	\ti B^n_1(t)&\doteq\frac{1}{n}\int_{[0,t]\times[0,1]}\bar{D}^{n\lambda_n\varphi^n_0}_{0}(ds\,dy)  - \frac{1}{n}\int_{[0,t]\times[0,1]}\one_{[0,\bar{X}^n_1(s-)-\bar{X}^n_2(s-))}(y)\bar{D}^{n\varphi^n_{1}}_1\left(ds\,dy\right)\label{eqn:tiBdef1}\\
	\ti B^n_i(t) &\doteq \frac{1}{n}\int_{[0,t]\times[0,1]}\one_{[0,\bar{X}^n_i(s-)-\bar{X}^n_{i+1}(s-))}(y)\bar{D}^{n\varphi^n_{i}}_i\left(ds\,dy\right),\qquad i\geq 2 \label{eqn:tiBdef2}
\end{align}
which allows us to write
\begin{align}
	\Ybar^n_1(t) &= X^n_1(0) + \ti B^n_1(t) +  \lambda_n\int_{[0,t]\times[0,1]}\varphi^n_{0}(s,y)\,ds\,dy \label{eq:YbarComp1}\\
	&\qquad  - \int_{[0,t]\times[0,1]}\one_{[0,\bar{X}^n_1(s)-\bar{X}^n_2(s))}(y)\varphi^n_1(s,y)\,ds\,dy,
	\nonumber\\
	\Ybar^n_i(t) &= X^n_i(0) -\ti B^n_i(t) - \int_{[0,t]\times[0,1]}\one_{[0,\bar{X}^n_i(s)-\bar{X}^n_{i+1}(s))}(y)\varphi^n_i(s,y)\,ds\,dy\qquad i \ge 2. \label{eq:YbarComp2}
\end{align}
The following lemma characterizes the limit points of $\{(\Xbar^n,\Ybar^n,\etabar^n,\varphi^n)\}_{n\in\NN_0}$.

\begin{lemma}\label{lem:convergence}
	Suppose that $\{\varphi^n\}$ is a sequence as in Lemma \ref{lem:tightness}.
	Suppose also that the associated sequence $\{(\Xbar^n,\Ybar^n,\etabar^n,\varphi^n)\}_{n\in\NN_0}$ converges along a subsequence, in distribution, to $(\Xbar,\Ybar,\etabar,\varphi)$ given on some probability space $(\Omega^*,\Fmc^*,\PP^*)$.
	Then the following holds $\PP^*$-a.s.
	\begin{enumerate}[(a)]
	\item
Equations \eqref{eq:psi1}--\eqref{eq:psii} are satisfied with $(\zeta, \psi, \varphi)$ replaced by $(\Xbar, \Ybar, \bar\varphi)$.
	\item
		$(\Xbar,\Ybar)\in \clc_T$ and $\varphi\in\cls_T(\Xbar,\Ybar)$.
		In particular, $(\Xbar,\Ybar,\etabar)$ satisfy the following system of equations
		\begin{align}
			\bar{X}_1(t) &= \Ybar_1(t) - \etabar_1(t), \label{eq:XbarLim1} \\
			\bar{X}_i(t) &= \Ybar_i(t) + \etabar_{i-1}(t) - \etabar_{i}(t), \quad i \ge 2, \label{eq:XbarLim2}
		\end{align}
		and for every $i \in \NN$,  $\etabar_i(0)=0$, $\etabar_i$ is non-decreasing, and $\int_0^t \one_{\{\Xbar_i(s)<1\}} \, \etabar_i(ds) = 0$.
	\end{enumerate}
\end{lemma}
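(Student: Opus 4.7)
The plan is to invoke the Skorokhod representation theorem so that, along the given subsequence, the convergence $(\bar X^n, \bar Y^n, \bar \eta^n, \varphi^n) \to (\bar X, \bar Y, \bar \eta, \varphi)$ holds almost surely, and then to pass term-by-term to the limit in the controlled identities \eqref{eq:maincontproc}. By the $\Cmb$-tightness half of Lemma \ref{lem:tightness}, the limits $\bar X, \bar Y, \bar \eta$ are continuous and the pre-limit processes converge uniformly in $t$ (coordinatewise). The linear relations \eqref{eq:XbarLim1}--\eqref{eq:XbarLim2} then follow immediately. The Skorokhod constraints on $\bar \eta_i$ (non-decreasing, $\bar \eta_i(0) = 0$, and $\int_0^T \one_{\{\bar X_i < 1\}}\, d\bar \eta_i = 0$) transfer to the limit from their pre-limit counterparts using the uniform convergence of $\bar X^n_i$ to $\bar X_i$ and the portmanteau-type passage for the complementarity condition on the open set $\{\bar X_i < 1\}$.

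For part (a), I would work with the compensated form \eqref{eq:YbarComp1}--\eqref{eq:YbarComp2} and handle three pieces. First, the compensated martingales $\tilde B^n_i$ in \eqref{eqn:tiBdef1}--\eqref{eqn:tiBdef2} have, by Doob's inequality, $\Emb[\sup_{t \le T} |\tilde B^n_i(t)|^2] = O(1/n)$ after bounding $\Emb \int \varphi^n_k\, ds\, dy$ via Lemma \ref{lem:ellProp}(b) and \eqref{eqn:controlBound}; hence $\tilde B^n_i \to 0$ uniformly in probability. Second, $\lambda_n \int \varphi^n_0\, ds\, dy \to \lambda \int \varphi_0\, ds\, dy$ by the definition of weak convergence in $\Mmb$. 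The main obstacle lies in the third piece, the indicator-weighted integrals $\int \one_{[0, \bar X^n_i(s) - \bar X^n_{i+1}(s))}(y) \varphi^n_i(s,y)\, ds\, dy$, which combine a $y$-discontinuous function of the (uniformly converging) state with controls that converge only in the measure topology. To overcome this, I would first truncate $\varphi^n_i$ at level $L$; by Lemma \ref{lem:ellProp}(a) and \eqref{eqn:controlBound}, the mass on $\{\varphi^n_i > L\}$ is at most $\gamma(L) M_0$, which is uniform in $n$ and vanishes as $L \to \infty$. For the bounded part $\varphi^n_i \wedge L$, I would sandwich $\one_{[0,a)}(y)$ between continuous piecewise-linear approximants differing on a $2\varepsilon$-interval about $a$, producing an error bounded by $2L\varepsilon T$. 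With a fixed continuous approximant, uniform convergence of $\bar X^n_i - \bar X^n_{i+1}$ together with the weak convergence of $\varphi^n_i$ gives the desired limit after sending $n \to \infty$, then $\varepsilon \to 0$, then $L \to \infty$. This yields the limit identities \eqref{eq:psi1}--\eqref{eq:psii} with $\varphi$ playing the role of the control.

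For part (b), I would verify the defining properties of $\Cmc_T$. The ordering $1 \ge \bar X_1 \ge \bar X_2 \ge \cdots \ge 0$ passes pointwise from the pre-limit. The summability bound $\sum_{i=1}^\infty \bar X^n_i(t) \le \sum_i x^n_i + \lambda_n \int_{[0,t]\times[0,1]} \varphi^n_0\, ds\, dy$ is uniform in $n$ by Assumption \ref{assump:1} and Lemma \ref{lem:ellProp}(b), so Fatou yields $\sup_{t\le T}\sum_i \bar X_i(t) < \infty$. The Skorokhod structure in property (iv), including \eqref{eq:zeta_psi_eta}, was established in the opening paragraph. Absolute continuity of $\bar Y_i$ is immediate from part (a) since its right-hand side is a Lebesgue integral of an $L^1$ density (by Lemma \ref{lem:ellProp}(b) applied to the limit, whose cost is finite by lower semicontinuity of the relative entropy); absolute continuity of $\bar X_i$ and $\bar \eta_i$ then follows via Remark \ref{rem:remskor}. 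Thus $(\bar X, \bar Y) \in \Cmc_T$, and part (a) is precisely the statement that $\varphi \in \Smc_T(\bar X, \bar Y)$, completing the proof.
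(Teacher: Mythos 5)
Your overall architecture is the same as the paper's: pass to a.s.\ convergence by Skorokhod representation, kill the compensated martingales with Doob's inequality and the $\ell$-moment bound, establish the controlled ODE for $(\bar X,\bar Y)$ in the limit, and then transfer the Skorokhod-problem structure and the complementarity condition to the limit. Your treatment of part (b), including the summability via Fatou and absolute continuity via part (a) and Remark \ref{rem:remskor}, tracks the paper closely.

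The one genuine issue is in your treatment of the indicator-weighted integrals. You truncate $\varphi^n_i$ at level $L$, bound the smoothing error against the truncated control by $2L\varepsilon T$, and then assert that ``uniform convergence of $\bar X^n_i - \bar X^n_{i+1}$ together with the weak convergence of $\varphi^n_i$ gives the desired limit'' when $n\to\infty$. But the truncated densities $\varphi^n_i \wedge L$ do not converge weakly to $\varphi_i \wedge L$ in general; weak convergence of $\nu^{\varphi^n_i}$ does not pass through the nonlinear cap $\cdot \wedge L$ (e.g.\ $\varphi^n_i$ oscillating between $0$ and $2$ with weak limit $\equiv 1$ has $\varphi^n_i\wedge 1$ converging to $\tfrac12$, not $1$). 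So the claimed $n\to\infty$ limit with $\varepsilon,L$ fixed is not justified as written. The fix is to apply weak convergence to the \emph{untruncated} $\varphi^n_i$ against the continuous $g_\varepsilon$, and account for the truncation error $\le \gamma(L)M_0$ separately in both pre-limit and limit (the limit control also has finite cost, by lower semicontinuity, so the same tail bound holds). Alternatively, skip the truncation entirely and bound the smoothing error $\int \one_{\{|y-a^n(s)|<\varepsilon\}}\varphi^n_i\,ds\,dy$ directly by uniform integrability (the superlinearity of $\ell$ gives this), which is closer in spirit to what the paper does: the paper avoids smoothing altogether by splitting via the triangle inequality and handling the first piece by a.e.\ convergence of the indicator difference plus uniform integrability (a generalized DCT), and the second piece by weak convergence against a fixed bounded test function whose discontinuity set has Lebesgue (hence limit-measure) zero.

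In short: the route is the right one, but the truncate-then-pass-to-limit step as written makes a false claim about weak convergence of capped densities; reorganize so that the weak-convergence step sees the uncapped $\varphi^n_i$, or replace the $2L\varepsilon T$ bound by a uniform-integrability bound and dispense with the cap.
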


\begin{proof}
	Assume without loss of generality that convergence occurs along the whole sequence.
Recall the expression for $\Ybar^n$ given in \eqref{eq:YbarComp1}--\eqref{eq:YbarComp2} and the definition of $\ti B^n$ given in \eqref{eqn:tiBdef1}--\eqref{eqn:tiBdef2}.
	It follows from Doob's inequality and Lemma \ref{lem:ellProp}(b) that for each $k\in\NN_0$
	\begin{align}
		\E\left(\sup_{0\leq t\leq T}|\ti B^n_k(t)|^2\right)
		& \leq \frac{1}{n}\E\int_{[0,T]\times[0,1]}[\lambda_n\varphi^n_{0}(s,y)+ \varphi^n_{k}(s,y)]\,ds\,dy \nonumber\\
		& \leq \frac{1}{n}\E\int_{[0,T]\times[0,1]}[\lambda_n (\ell(\varphi^n_{0}(s,y))+2)+ \ell(\varphi^n_{k}(s,y))+2]\,ds\,dy \nonumber\\
		& \leq \frac{1}{n}(\lambda_n+1)(M_0+2T) \to 0.\label{eq:eq613}
	\end{align}
	By appealing to the Skorokhod representation theorem {(cf.\ \cite[Theorem 6.7]{BillingsleyConv})}, we can assume without loss of generality
  that $(\Xbar^n,\Ybar^n,\etabar^n,\varphi^n, \Btil^n) \to (\Xbar,\Ybar,\etabar,\varphi, 0)$ a.s.\ on $(\Omega^*,\Fmc^*,\PP^*)$, and thus the rest of the argument will be made a.s.\ on $(\Omega^*,\Fmc^*,\PP^*)$.
	From the $\CC$-tightness proved in Lemma \ref{lem:tightness} $(\Xbar,\Ybar, \etabar)$ takes values in $\Cmb_{\RR^\iy\times\RR^\iy \times \RR^\iy}$.
	Using the triangle inequality 
	\begin{align}\label{eqn:conv1}
		\begin{split}
		&\left|\int_{[0,t]\times[0,1]}\one_{[0,\bar{X}^n_i(s)-\bar{X}^n_{i+1}(s))}(y)\varphi^n_i(s,y)\,ds\,dy-\int_{[0,t]\times[0,1]}\one_{[0,\bar{X}_i(s)-\bar{X}_{i+1}(s))}(y)\varphi_i(s,y)\,ds\,dy\right|\\
		&\qquad \leq \int_{[0,t]\times[0,1]}|\one_{[0,\bar{X}^n_i(s)-\bar{X}^n_{i+1}(s))}(y)-\one_{[0,\bar{X}_i(s)-\bar{X}_{i+1}(s))}(y)|\varphi^n_i(s,y)\,ds\,dy\\
		&\qquad\qquad + \left|\int_{[0,t]\times[0,1]}\one_{[0,\bar{X}_i(s)-\bar{X}_{i+1}(s))}(y) (\varphi^n_i(s,y)-\varphi_i(s,y))\,ds\,dy\right|
		\end{split}
	\end{align}
	for each $i\in\NN$.
	Since $\leb_t\{(s,y) : y=\bar{X}_i(s)-\bar{X}_{i+1}(s) \}=0$, where $\leb_t$ is the Lebesgue measure on $[0,t]\times[0,1]$, we have
	\begin{align*}
		|\one_{[0,\bar{X}^n_i(s)-\bar{X}^n_{i+1}(s))}(y)-\one_{[0,\bar{X}_i(s)-\bar{X}_{i+1}(s))}(y)|\to 0
	\end{align*}
	as $n \to \infty$ for $\leb_t$-a.e. $(s,y)\in[0,t]\times[0,1]$.
	From \eqref{eqn:controlBound} and the super-linearity of $\ell$, one has the uniform integrability of
		$(s,y)\mapsto\varphi^n_i(s,y)$
	with respect to the normalized Lebesgue measure on $[0,T]\times[0,1]$.
	The above two observations imply that, as $n\to \infty$,
	\begin{equation}\label{eqn:conv2}
		\int_{[0,t]\times[0,1]}|\one_{[0,\bar{X}^n_i(s)-\bar{X}^n_{i+1}(s))}(y)-\one_{[0,\bar{X}_i(s)-\bar{X}_{i+1}(s))}(y)|\varphi^n_i(s,y)\,ds\,dy\to 0.
	\end{equation}
	Recalling the topology on $S_{M_0}$, the convergence $\varphi^n \to \varphi$ and $\lambda_n\to \lambda$ implies that
	\begin{align}
		& \left|\int_{[0,t]\times[0,1]}\one_{[0,\bar{X}_i(s)-\bar{X}_{i+1}(s))}(y) (\varphi^n_i(s,y)-\varphi_i(s,y))\,ds\,dy\right|
		\to 0, \label{eqn:conv3} \\
		& \lambda_n\int_{[0,t]\times[0,1]}\varphi^n_{0}(s,y)\,ds\,dy \to \lambda\int_{[0,t]\times[0,1]}\varphi_{0}(s,y)\,ds\,dy. \label{eqn:conv3.5}
	\end{align}
	Combining \eqref{eq:YbarComp1}--\eqref{eq:YbarComp2} with \eqref{eq:eq613} and \eqref{eqn:conv1}--\eqref{eqn:conv3.5} completes the proof of (a).

	We now prove part (b).
The validity of \eqref{eq:XbarLim1}--\eqref{eq:XbarLim2} is immediate from the fact that these equalities hold with
$(\bar X, \bar Y, \bar \eta)$ replaced with
$(\bar X^n, \bar Y^n, \bar \eta^n)$.
We now check that $(\bar X, \bar Y)\in\clc_T$ by verify properties (i)--(iv) in the definition of $\clc_T$.
The absolute continuity property in property (i)  follows from the absolute continuity of $\bar Y_i$, which follows from part (a) of the lemma, and property (iv) (whose proof is given below), together with properties of the Skorokhod map.
The remaining statements in properties (i)--(ii) of the definition of $\clc_T$ are  immediate from Assumption \ref{assump:1} and the fact that for every $t$, $1= \bar X^n_0(t) \ge \bar X^n_1(t) \ge \cdots$.
	Property (iii) follows on noting that
	\begin{align*}
		\Emb \left[ \sup_{0\leq t\leq T}\sum_{i=1}^\iy\Xbar_i(t) \right]
		& \le \liminf_{n\to\iy} \Emb \left[ \sup_{0\leq t\leq T} \sum_{i=1}^\iy \Xbar^n_i(t) \right] \\
		& \leq \liminf_{n\to\iy} \sum_{i=1}^\iy x^n_i + \liminf_{n\to\iy} \int_{[0,T]\times[0,1]} \lambda_n\varphi^n_0(s,y) \,ds\,dy
		< \iy,
	\end{align*}
	where the first inequality follows from Fatou's lemma, and the last inequality uses Assumption \ref{assump:1} and \eqref{eqn:controlBound}.
	Finally we verify part (iv).
		%
	%
	Fix $k\in \Nmb$.
	That $\etabar_k(\cdot)$ is nondecreasing follows from the fact that the property holds for all $\etabar_k^n(\cdot)$.
	We now verify that
	\begin{equation}\label{eqn:reflectionproperty}
		\int_0^T\left(1-\bar{X}_k(s)\right)\etabar_k(ds)=0.
	\end{equation}
	Note that $\etabar_k^n(t)$ can only increase when $\Xbar_{k}^n(t-) = 1$ and thus
	\begin{align*}
		\int_0^T \left(1-\Xbar_k^n(t-)\right)\etabar^n_k(ds) =0.
	\end{align*}
	From this we have
	\begin{align}\label{eqn:conv4}
		\begin{split}
		\left|\int_0^T\left(1-\Xbar_k(s)\right)\etabar_k(ds)\right|
		&=   \left|\int_0^T(1-\Xbar_k(s))\etabar_k(ds)-\int_0^T \left(1-\Xbar_k^n(s-)\right)\etabar^n_k(ds)\right|\\
		&\leq \left|\int_0^T\left(1-\Xbar_k(s)\right)\etabar_k(ds)-\int_0^T\left(1-\Xbar_k(s)\right)\etabar^n_k(ds)\right|\\
		&\qquad+\int_0^T \left|\Xbar_k^n(t-)-\Xbar_k(t)\right|\etabar^n_k(dt).
		\end{split}
	\end{align}
	It follows from the fact that $\etabar^n_k\to \etabar_k$ as a finite measure on $[0,T]$ and $\Xbar_k(s)\in\CC_b([0,T]:\RR)$ that
	\begin{align}
		\label{eqn:conv5}
		\left|\int_0^T\left(1-\Xbar_k(s)\right)\etabar_k(ds)-\int_0^T\left(1-\Xbar_k(s)\right)\etabar^n_k(ds)\right|\to0\text{ as }n\to\iy.
	\end{align}
	Finally, continuity of $\Xbar_k$ and uniform convergence of $\Xbar^n_k$ to $\Xbar_k$ gives
	\begin{align*}
		\int_0^T\left|\Xbar_k^n(t-)-\Xbar_k(t)\right|\etabar^n_k(dt)
		\leq \left(\sup_{0\leq t\leq T}\left|\Xbar_k^n(t-)-\Xbar_k(t)\right|\right)\etabar^n_k(T)
		\to 0,
	\end{align*}
	which, combined with \eqref{eqn:conv4} and \eqref{eqn:conv5}, gives \eqref{eqn:reflectionproperty} verifying property (iv). Thus we have shown
	that $(\bar X, \bar Y) \in\clc_T$ a.s.  The fact that $\varphi \in \cls_T(\bar X, \bar Y)$ is now immediate from part (a) of the lemma.
\end{proof}

\section{Laplace Upper Bound}\label{sec:UpperBound}

This section is devoted to the proof of the Laplace upper bound \eqref{eqn:LaplaceUpperBound}.
Fix $G \in \CC_b(\Dmb_{\Rmb^\infty \times \Rmb^\infty})$.
From the variational representation in Lemma \ref{lem:varRep}, for all $n\in\NN$, we can select a control $\ti\varphi^n\in\bar{\newset}_b$ such that
\begin{equation}\label{eqn:upperNopt}
	-\frac{1}{n}\log \E e^{-nG(X^n,Y^n)}\geq \E\left\{\sum_{k=0}^\iy\int_{[0,T]\times[0,1]}\vartheta^n_k\ell(\ti\varphi^n_k(s,y))\,ds\,dy + G(\bar{X}^{n,\ti\varphi^n},\bar{Y}^{n,\ti\varphi^n})\right\}-\frac{1}{n}.
\end{equation}
This shows that
\begin{equation*}
	\sup_{n\in\NN}\E\sum_{k=0}^\iy\int_{[0,T]\times[0,1]}\vartheta^n_k\ell(\ti\varphi^n_k(s,y))\,ds\,dy
	\leq 2\|G\|_\iy+1
	\doteq M_G.
\end{equation*}
By a standard localization argument (see e.g.\ \cite[Proof of Theorem 4.2]{budhiraja2011variational}) it now follows that for any fixed $\sigma>0$ there is a $M_0 \in (0,\infty)$
and a sequence $\varphi^n \in \bar \newset_b$ taking values in $S_{M_0}$ a.s. such that, for every $n$, the expected value on the right side of \eqref{eqn:upperNopt} differs from the same expected value, but with $\tilde \varphi^n$ replaced by $\varphi^n$ throughout, by at most $\sigma$. In particular,
\begin{equation}\label{eqn:UpperBound1}
	-\frac{1}{n}\log \E e^{-nG(X^n,Y^n)}\geq \E\left\{\sum_{k=0}^\iy\int_{[0,T]\times[0,1]}\vartheta^n_k\ell(\varphi^{n}_k(s,y))\,ds\,dy + G(\bar{X}^{n,\varphi^{n}},\bar{Y}^{n,\varphi^{n}})\right\}-\frac{1}{n}-\sigma.
\end{equation}
Now we can complete the proof of the Laplace upper bound.
Since $\varphi^n$ are in $S_{M_0}$ a.s., from Lemma  \ref{lem:tightness} we have the tightness of $(\Xbar^n,\Ybar^n,\etabar^n, \varphi^n)$.
Assume without loss of generality that $(\Xbar^n,\Ybar^n,\etabar^n, \varphi^n)$ converges along the whole sequence, in distribution, to $(\Xbar,\Ybar,\etabar, \varphi)$, given on some probability space $(\Omega^*, \Fmc^*, \Pmb^*)$.
By Lemma \ref{lem:convergence} we have $(\Xbar,\Ybar) \in \Cmc_T$ and  $\varphi \in \Smc_T(\Xbar,\Ybar)$ a.s.\ $\Pmb^*$.
Using \eqref{eqn:UpperBound1}, Fatou's lemma, and the definition of $I_T$ in \eqref{eqn:JSQRateFunction}
\begin{align*}
	\liminf_{n\to\iy} -\frac{1}{n}\log \E e^{-nG(X^n,Y^n)} & \ge \liminf_{n\to\iy}\E\left\{\sum_{k=0}^\iy\int_{[0,T]\times[0,1]}\vartheta_k^n\ell(\varphi^n_k(s,y))\,ds\,dy + G(\bar{X}^n,\bar{Y}^n)-\frac{1}{n}-\sigma\right\} \\
	& \ge \E^*\left\{\sum_{k=0}^\iy\int_{[0,T]\times[0,1]}\vartheta_k\ell(\varphi_k(s,y))\,ds\,dy + G(\bar{X},\bar{Y})\right\}-\sigma\\
	& \ge \inf_{(\zeta,\psi)\in\clc_T} \{I_T(\zeta,\psi) + G(\zeta,\psi)\}-\sigma,
\end{align*}
where the second inequality is a consequence of  a lower semicontinuity property established in Lemma A.1 in \cite{budhiraja2013large}.
Since $\sigma \in (0,1)$ is arbitrary, this completes the proof of the Laplace upper bound.
\qed

\section{Laplace Lower Bound}\label{sec:LowerBound}

This section is devoted to the proof of the Laplace lower bound \eqref{eqn:LaplaceLowerBound}. 
The proof, given in Section \ref{sec:ProofLowerBound}, proceeds by first selecting a trajectory $(\zeta^*, \psi^*)$ which is near-optimal in the infimum on the right side of the lower bound and then an associated control $\varphi^*$ which is near-optimal for the rate function evaluated at $(\zeta^*, \psi^*)$.
One then leverages the results of Section \ref{sec:varWeakCon} to construct a sequence of controls and associated controlled process $(\Xbar^n,\Ybar^n,\varphi^n)$ which are tight and have weak limit points $(\bar \psi, \bar \zeta, \bar \varphi)$ that satisfy $\bar \varphi = \varphi^*  \in \Smc_T(\bar \zeta, \bar \psi)$.
However, it is not guaranteed that $(\zeta^*, \psi^*) = (\bar \zeta, \bar \psi)$ as it is not necessarily true that the control $\varphi^*$ drives a unique set of solutions to the controlled ODE, \mbox{\eqref{eq:psi1}--\eqref{eq:psii}}.
In particular, if $\varphi^*\in\cls_T(\zeta^*, \psi^*)$ and $\varphi^*\in\cls_T(\bar \zeta, \bar \psi)$ then, in general, $(\zeta^*, \psi^*)$ need not equal $(\bar \zeta, \bar \psi)$.
This obstacle is overcome in Lemma \ref{lem:uniqueness}, which says that one can select a trajectory and control $(\zeta, \psi, \varphi)$ suitability close to $(\zeta^*, \psi^*, \varphi^*)$ for which this uniqueness property does hold. 
This uniqueness result is at the technical heart of this work and its proof relies on several other intermediate results which proceed by successively modifying the original trajectories and controls  so that the modified trajectories  remain close to the original trajectories
while changing the cost only slightly, and such that the final set of trajectories and controls do have the desired uniqueness property.
To do this, first Lemmas \ref{lem:uniqueness_preparation_0} and \ref{lem:uniqueness_preparation} form an inductive argument showing that one can approximate any trajectory with one in which the shortest queue length $\pi$ changes only a finite number of times.
This relies on taking any candidate trajectory $(\bar \zeta, \bar\psi)$ and ``smoothing'' out small excursions of the trajectories over which $\pi$ may change an infinite number of times.
Then, in Lemma \ref{lem:uniqueness_preparation_2}, we further modify the trajectory so that it is well-behaved in the neighborhood of the finitely many time instants  at which $\pi$ changes values.
With these properties in hand, we then proceed to the proof of the uniqueness result, Lemma \ref{lem:uniqueness}.
The main step in the proof follows from an argument by contradiction which hinges on introducing ``$\eps$-gaps'' in the spatial thinning used to define the controls for the PRM, see \eqref{eq:gap}.
These gaps alter each control $\varphi_i(t, y)$ so that they are zero in the $\varepsilon$-neighborhood around $\zeta_k(t)-\zeta_{k-1}(t)$ without altering the resulting state trajectory.
The basic idea is that if there were two different solutions to the ODE then there would be some time $\tau$ when they diverged.
In the time right after $\tau$, the two trajectories must be extremely close which due to the $\varepsilon$-gap property 
will say that  the time derivative of the ODE must remain the same in a small interval beyond $\tau$, triggering the contradiction. 
A similar ``$\varepsilon$-gap'' argument in a very different context has recently also been used in \cite{BhamidiBudhirajaDupuisWu2019rare}.
The technical details of this argument rely heavily on the regularity properties established in Lemmas \ref{lem:uniqueness_preparation_2}--\ref{lem:uniqueness_preparation}.
In order to make clear the motivation behind the various lemmas in this section and improve the overall readability we present these lemmas out of order.
An outline of how these various lemmas are used and the organization of this section is as follows:
\begin{itemize}
	\item Proof of the lower bound \eqref{eqn:LaplaceLowerBound} is a consequence of the main uniqueness result, Lemma \ref{lem:uniqueness}. The statement of
	Lemma \ref{lem:uniqueness} and proof of \eqref{eqn:LaplaceLowerBound} based on this lemma are given in Section \ref{sec:ProofLowerBound}.
	\item Lemma \ref{lem:uniqueness} relies on Lemma \ref{lem:uniqueness_preparation_2} which asserts 
	existence of approximating trajectories for which $\pi$ changes at finitely many time instants and that have suitable regularity in the neighborhood of the boundaries of the finitely many pieces of the approximating trajectories over which $\pi$ is constant. The statement of Lemma \ref{lem:uniqueness_preparation_2} and proof of  Lemma \ref{lem:uniqueness} based on this lemma are given in Section \ref{sec:uniqueness}.
	\item Section \ref{sec:uniqueness_preparation} is the longest subsection and it is devoted to the proof of Lemma \ref{lem:uniqueness_preparation_2}. This lemma is a consequence of Lemmas \ref{lem:uniqueness_preparation_0} and \ref{lem:uniqueness_preparation} that construct approximating trajectories in which the shortest queue $\pi$ changes a finite number of times. These lemmas (statements and proofs) are given in Section
	\ref{sec:uniqueness_preparation} as well. 
\end{itemize}

\subsection{Proof of Lower Bound}\label{sec:ProofLowerBound}
The following  lemma is key to the proof of the lower bound \eqref{eqn:LaplaceLowerBound}. It says that,  given a trajectory $(\zeta^*,\psi^*) \in \clc_T$, one can select a trajectory $(\zeta,\psi)$ which is suitably close to $(\zeta^*,\psi^*)$ and a control $\varphi$ such that $(\zeta,\psi)$ is the unique trajectory driven by $\varphi$.
We will equip the space $\Cmb_{\Rmb^{\infty}\times \Rmb^{\infty}}$ with the distance: for $(\zeta, \psi), (\zetatil, \psitil) \in \Cmb_{\Rmb^{\infty}\times \Rmb^{\infty}}$, 
$$d((\zeta, \psi), (\zetatil, \psitil)) \doteq \|(\zeta, \psi) - (\zetatil, \psitil)\|_{\infty} \doteq \|\zeta-\zetatil\|_{\infty} + \|\psi-\psitil\|_{\infty} = \sum_{i=1}^\infty \frac{\|\zeta_i-\zetatil_i\|_\infty \wedge 1}{2^i} + \sum_{i=1}^\infty \frac{\|\psi_i-\psitil_i\|_\infty \wedge 1 }{2^i}$$
where recall that for 
$f \in \Cmb([0,T]: \mathbb{R})$, 
$\|f\|_{\infty} = \sup_{0\le t \le T} |f(t)|$.

\begin{lemma}\label{lem:uniqueness}
	Fix $\sigma\in(0,1)$. Given $(\zeta^*,\psi^*)\in\clc_T$ with $I_T(\zeta^*,\psi^*)<\iy$, there exists $(\zeta,\psi)\in\clc_T$ and $\varphi\in\cls_T(\zeta,\psi)$ such that
	\begin{enumerate}[a)]
	\item
		$\|(\zeta,\psi)-(\zeta^*,\psi^*)\|_\infty \le \sigma$.
	\item
		$\sum_{k=0}^\iy\int_{[0,T]\times[0,1]} \vartheta_k\ell(\varphi_{k}(s,y)) \,ds\,dy \le I_T(\zeta,\psi) +\sigma \leq I_T(\zeta^*,\psi^*)+2\sigma$.
	\item
		If $(\ti\zeta,\ti\psi)$ is another pair in $\clc_T$ such that $\varphi\in\cls_T(\ti\zeta,\ti\psi)$, then $(\ti\zeta,\ti\psi)=(\zeta,\psi)$.
	\end{enumerate}
\end{lemma}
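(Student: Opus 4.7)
The plan is to combine an approximation step (provided by Lemma \ref{lem:uniqueness_preparation_2}) with an ``$\varepsilon$-gap'' modification of the control that is specifically designed to force uniqueness. First I would apply Lemma \ref{lem:uniqueness_preparation_2} to $(\zeta^*,\psi^*)$ to obtain $(\zeta,\psi)\in\clc_T$ with $\|(\zeta,\psi)-(\zeta^*,\psi^*)\|_\infty\le\sigma/2$ and $I_T(\zeta,\psi)\le I_T(\zeta^*,\psi^*)+\sigma/2$, for which the ``shortest-queue index'' $\pi(t)\doteq\min\{i\ge 1:\zeta_i(t)<1\}$ assumes only finitely many values on $[0,T]$, together with the additional local regularity asserted by that lemma near the finitely many times at which $\pi$ changes. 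By definition of $I_T$, I then select a control $\varphi'\in\cls_T(\zeta,\psi)$ whose cost is within $\sigma/4$ of $I_T(\zeta,\psi)$. Parts (a) and (b) of the lemma would already be accessible with $\varphi'$ in place of $\varphi$; the whole point is to modify $\varphi'$ so that (c) also holds.

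Next I would construct $\varphi$ from $\varphi'$ by introducing ``$\varepsilon$-gaps'' for a small $\varepsilon>0$, to be chosen later: for each $i\ge 1$, replace $\varphi'_i(s,\cdot)$ by a new $\varphi_i(s,\cdot)$ that vanishes on the strip
\begin{equation}\label{eq:gap}
\Imc^\varepsilon_i(s)\doteq(\zeta_i(s)-\zeta_{i+1}(s)-\varepsilon,\,\zeta_i(s)-\zeta_{i+1}(s)),
\end{equation}
while preserving the integral $\int_{[0,1]}\one_{[0,\zeta_i(s)-\zeta_{i+1}(s))}(y)\,\varphi_i(s,y)\,dy$. Since only the integrated mass against the indicator enters \eqref{eq:psi1}--\eqref{eq:psii}, the new $\varphi$ still drives $(\zeta,\psi)$ and lies in $\cls_T(\zeta,\psi)$. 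The redistribution of mass on $[0,\zeta_i(s)-\zeta_{i+1}(s)-\varepsilon]$ can be performed by an averaging procedure, so that Jensen's inequality for the convex function $\ell$, combined with $\ell(0)=1$ and Lemma \ref{lem:ellProp}(b), gives cost inflation of order $\varepsilon$ times a constant depending only on $\varphi'$. Shrinking $\varepsilon$ sufficiently, properties (a) and (b) are preserved. An analogous averaging is applied to $\varphi'_0$, if needed, to ensure the construction is uniform in $i$.

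The heart of the proof is (c), which I would establish by contradiction. Suppose $(\ti\zeta,\ti\psi)\in\clc_T$ also satisfies $\varphi\in\cls_T(\ti\zeta,\ti\psi)$ and let $\tau\doteq\inf\{t\in[0,T]:(\ti\zeta(t),\ti\psi(t))\ne(\zeta(t),\psi(t))\}$. Continuity of both trajectories gives $(\ti\zeta(\tau),\ti\psi(\tau))=(\zeta(\tau),\psi(\tau))$. Using the finiteness of $\pi$-regime changes and the local regularity from Lemma \ref{lem:uniqueness_preparation_2}, one arranges that on a small interval $[\tau,\tau+\delta]$ the trajectory $(\zeta,\psi)$ lies in a single $\pi$-phase in a controlled fashion. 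By absolute continuity of $\zeta$ and $\ti\zeta$, one can shrink $\delta>0$ so that $\sup_{i}|\zeta_i(s)-\ti\zeta_i(s)|<\varepsilon/4$ for $s\in[\tau,\tau+\delta]$. Consequently, the symmetric difference between $[0,\zeta_i(s)-\zeta_{i+1}(s))$ and $[0,\ti\zeta_i(s)-\ti\zeta_{i+1}(s))$ is contained in $\Imc^\varepsilon_i(s)$, where $\varphi_i(s,\cdot)\equiv 0$; the two integrals in \eqref{eq:psi1}--\eqref{eq:psii} therefore coincide for the two trajectories on $[\tau,\tau+\delta]$, forcing $\ti\psi=\psi$ there. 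Uniqueness of the Skorokhod problem (Lemma \ref{lem:SMap}) then yields $\ti\zeta=\zeta$ on $[\tau,\tau+\delta]$, contradicting the definition of $\tau$. The main obstacle is the careful treatment of the boundaries: $\pi$-change points, indices $i$ on which $\zeta_i\equiv\zeta_{i+1}$ on a sub-interval, and the interface between reflected and unreflected coordinates. These are precisely the pathologies that Lemma \ref{lem:uniqueness_preparation_2} is designed to eliminate, which is why the approximation step is an indispensable ingredient of the argument.
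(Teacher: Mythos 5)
Your overall plan---approximate via Lemma \ref{lem:uniqueness_preparation_2}, introduce spatial ``$\varepsilon$-gaps'' in the controls, then argue uniqueness by contradiction at the first divergence time $\tau$---is exactly the paper's strategy. However, several of your details are genuinely wrong and cannot be fixed by minor patches.

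First, your gap $\Imc^\varepsilon_i(s)=(r_i(s)-\varepsilon,\,r_i(s))$ is \emph{one-sided}: if $\tilde r_i(s)>r_i(s)$, the symmetric difference of $[0,r_i(s))$ and $[0,\tilde r_i(s))$ is $[r_i(s),\tilde r_i(s))$, which lies entirely \emph{above} the gap, so $\varphi_i$ need not vanish there and the two integrals do not agree. A two-sided gap is required; the paper zeroes $\varphi_k$ on $[(1-\varepsilon)r_k(t),(1+\varepsilon)r_k(t))$. Second, an \emph{additive} gap of fixed width $\varepsilon$ is incompatible with ``preserving the integral'': whenever $0<r_i(s)<\varepsilon$ your gap intersected with $[0,1]$ covers the entire set $[0,r_i(s))$, so zeroing $\varphi_i$ there annihilates the driving term and necessarily changes the trajectory---you cannot redistribute mass into a set of measure zero. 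The paper avoids this by using a \emph{multiplicative} gap of width $\varepsilon r_k(t)$, which scales with the support.

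Third, and most seriously, the direct argument ``shrink $\delta$ so that $\sup_i|\zeta_i-\tilde\zeta_i|<\varepsilon/4$, hence the two derivative vectors coincide, hence $\tilde\psi=\psi$'' does not go through. For coordinates $k<\pi(\tau)-1$ one has $r_k\equiv 0$ on a neighborhood of $\tau$, so any gap (additive or multiplicative) around $r_k=0$ is degenerate; if $\tilde r_k(s)$ is small but positive, $\tilde\psi_k'$ carries a nonzero departure term that $\psi_k'$ does not, and the derivatives disagree. Similarly, for $k\ge M$ the control is identically $1$ (Lemma \ref{lem:uniqueness_preparation_2}(vi)), so there is no gap at all for the tail coordinates and no way to force derivative matching there. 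These are precisely the reasons why the paper does \emph{not} prove $\tilde\psi'=\psi'$ directly: it introduces an additional modification $\varphi_{c_i-1}(t,y)=\one_{[\varepsilon,1]}(y)$ (a gap near $y=0$) to control the coordinate just below the shortest-queue level, invokes property (ii) of Lemma \ref{lem:uniqueness_preparation_2} (a no-arrival window) when $\pi$ jumps down at $\tau=t_i$, and then replaces derivative matching by a Lyapunov argument on $Y(t)=\sum_k|\tilde r_k(t)-r_k(t)|$, showing $Y'\le 0$ from sign considerations alone. Attributing all of this to the preparation lemma undersells what actually has to happen inside the proof of Lemma \ref{lem:uniqueness}; the preparation lemma only makes the regime structure finite and regular, it does not by itself neutralize the degenerate coordinates or the tail.
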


Proof of this lemma will be given in Section \ref{sec:uniqueness}.
We now complete the proof of the lower bound using this result.
Fix $G\in\CC_b(\Dmb_{\Rmb^\infty \times \Rmb^\infty})$ and $\sigma\in(0,1)$. Select a trajectory $(\zeta^*,\psi^*)$ which is $\sigma$-optimal for the RHS of \eqref{eqn:LaplaceLowerBound}, namely
\begin{equation}
	\label{eq:lowerbd_pf1}
	I_T(\zeta^*,\psi^*)+G(\zeta^*,\psi^*) \leq \inf_{(\zeta,\psi)\in\clc_T}\{I_T(\zeta,\psi)+G(\zeta,\psi)\}+\sigma.
\end{equation}
By continuity of $G$ and Lemma \ref{lem:uniqueness}, we can find $(\zetabar,\psibar) \in \clc_T$ and $\varphibar \in S_T(\zetabar,\psibar)$ such that the uniqueness property in  Lemma \ref{lem:uniqueness} holds (with $\varphi$ replaced by $\bar \varphi$) and 
\begin{align}
	\sum_{k=0}^\iy\int_{[0,T]\times[0,1]}\vartheta_k\ell(\varphibar_k(s,y))\,ds\,dy+G(\zetabar,\psibar) & \le I_T(\zetabar,\psibar)+G(\zetabar,\psibar)+\sigma \notag \\
	& \leq I_T(\zeta^*,\psi^*)+G(\zeta^*,\psi^*)+2\sigma. \label{eq:lowerbd_pf2}
\end{align}

Define the sequence of controls $\varphi^n \in \bar{\newset}_b$ as
\begin{align*}
	\varphi^n_i(s,y) & \doteq \frac{1}{n} \one_{\{\varphibar_i(s,y) \le \frac{1}{n}\}} + \varphibar_i(s,y) \one_{\{\frac{1}{n} < \varphibar_i(s,y) < n\}} + n \one_{\{\varphibar_i(s,y) \ge n \}}, \quad i \le n, \\
	\varphi^n_i(s,y) & \doteq 1, \quad i > n.
\end{align*}
Then there is a $M_0 \in (0,\infty)$ such that the sequence $\{\varphi^n\}$ satisfies \eqref{eqn:controlBound}.
Furthermore, it is easily checked that
 $\varphi^n \to \varphibar$ (in $S_{M_0}$).
It then follows from Lemmas \ref{lem:tightness} and \ref{lem:convergence} that $(\Xbar^n,\Ybar^n,\etabar^n,\varphi^n)$ is tight and any limit point $(\Xbar,\Ybar,\etabar,\varphi)$, given on some probability space $(\Om^*,\clf^*,\PP^*)$,
satisfies $(\Xbar, \Ybar) \in \clc_T$ and $\varphi \in \cls_T(\Xbar, \Ybar)$ a.s.
From the fact that $\varphi^n\to\varphibar$ we must have $\varphi=\varphibar$.
Thus $\varphibar\in\cls_T(\Xbar,\Ybar)$  and since we also have $\varphibar\in\cls_T(\zetabar,\psibar)$, we must have  $(\Xbar,\Ybar)=(\zetabar,\psibar)$ a.s. $\PP^*$ from the uniqueness property noted above.
Noting that $\ell(\varphi^n_k(s,y))\leq \ell(\varphibar_k(s,y))$ for all $n\in\NN_0$ and $(s,y)\in[0,T]\times[0,1]$,
it then follows from the variational representation \eqref{eqn:variationalRep} and \eqref{eq:lowerbd_pf1}--\eqref{eq:lowerbd_pf2} that
\begin{align*}
	\limsup_{n\to\iy}-\frac{1}{n}\log\E e^{-nG(X^n,Y^n)}
	&\leq \limsup_{n\to\iy}\E\left\{\sum_{k=0}^\iy\int_{[0,T]\times[0,1]}\vartheta^n_k\ell(\varphi^n_k(s,y))\,ds\,dy+G(\bar{X}^{n},\bar{Y}^{n})\right\}\\
	&\leq \sum_{k=0}^\iy\int_{[0,T]\times[0,1]}\vartheta_k\ell(\varphibar_k(s,y))\,ds\,dy+G(\zetabar,\psibar)\\
	&\leq\inf_{(\zeta,\psi)\in\clc_T}\left\{I_T(\zeta,\psi) + G(\zeta,\psi)\right\}+3\sigma.
\end{align*}
The inequality in \eqref{eqn:LaplaceLowerBound} now follows upon sending $\sigma\to0$. \hfill \qed

\subsection{Proof of Lemma \ref{lem:uniqueness}}\label{sec:uniqueness}
In this section we prove Lemma \ref{lem:uniqueness} using an intermediate result, Lemma \ref{lem:uniqueness_preparation_2}.
Proof of Lemma \ref{lem:uniqueness_preparation_2} is given in Section \ref{sec:uniqueness_preparation}.
Consider $(\zeta, \psi) \in \clc_T$. Informally, we will view $\zeta_k(t)$ as the (asymptotic analogue of) fraction of queues with $k$ or more jobs.
Let $\pi(t) \doteq \max \{k:\zeta_k(t) = 1\}$ represent the shortest queue length at time $t$.
It is easily verified that since $\zeta$ is continuous,
\begin{equation}
	\label{eq:pi_upper_semicts}
	\pi(t) \mbox{ is upper semi-continuous}.
\end{equation}
The following lemma shows that any $(\zeta, \psi) \in \clc_T$ can be suitably approximated by a trajectory for which 
the associated $\pi(t)$ changes only a finite number of times, in an appropriately regular manner.
\begin{lemma}
	\label{lem:uniqueness_preparation_2}
	Fix $\sigma\in(0,1)$. Given $(\zetatil,\psitil)\in\clc_T$ with $I_T(\zetatil,\psitil)<\iy$, there exists $(\zeta,\psi)\in\clc_T$ and $\varphi \in \Smc_T(\zeta,\psi)$ such that
	\begin{enumerate}[i)]
	\item
		There exist $N \in \NN$, $c_i \in \NN_0$ for all $i=0, 1, \ldots N-1$, and a finite partition $0=t_0 < t_1 < \dotsb < t_N = T$, such that $\pi(t)=c_i$ for $t \in (t_i,t_{i+1})$.
	\item
		For any $i$, if $\pi(t_i) > c_i$,  there exists some $\Delta t_i \in (0,t_{i+1}-t_i)$ such that $\varphi_0(t,y)=0$ for $t \in (t_i,t_i+\Delta t_i)$.
	\item
		$\|(\zeta,\psi)-(\zetatil,\psitil)\|_\infty \le \sigma$.
	\item
		$I_T(\zeta,\psi) \leq I_T(\zetatil,\psitil)+\sigma$.
	\item
		$\sum_{k=0}^\iy\int_{[0,T]\times[0,1]} \vartheta_k\ell(\varphi_{k}(s,y)) \,ds\,dy \le I_T(\zeta,\psi) + \sigma$.
	\item
		There exists $M \in \Nmb$ such that $\varphi_k = 1$ for $k \ge M$.
	\end{enumerate}
\end{lemma}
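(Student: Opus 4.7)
The plan is to produce $(\zeta,\psi,\varphi)$ in two stages. First, invoke the preparation Lemmas \ref{lem:uniqueness_preparation_0} and \ref{lem:uniqueness_preparation} to secure properties (i), (iii)--(vi) (with $\sigma/2$ in place of $\sigma$); second, perform finitely many local surgeries near the boundary times where property (ii) would otherwise fail, each small enough that the other five properties survive with the full budget $\sigma$.

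Apply Lemmas \ref{lem:uniqueness_preparation_0} and \ref{lem:uniqueness_preparation} to $(\zetatil, \psitil)$ with parameter $\sigma/2$ to obtain $(\hat\zeta, \hat\psi) \in \clc_T$, $\hat\varphi \in \Smc_T(\hat\zeta, \hat\psi)$, a finite partition $0 = t_0 < \dotsb < t_N = T$, constants $c_0, \ldots, c_{N-1} \in \NN_0$, and an integer $M$ such that $\hat\pi \equiv c_i$ on $(t_i, t_{i+1})$, $\hat\varphi_k \equiv 1$ for $k \ge M$, $\|(\hat\zeta, \hat\psi) - (\zetatil, \psitil)\|_\infty \le \sigma/2$, $I_T(\hat\zeta, \hat\psi) \le I_T(\zetatil, \psitil) + \sigma/2$, and $\sum_k \int \vartheta_k \ell(\hat\varphi_k) \le I_T(\hat\zeta, \hat\psi) + \sigma/2$. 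Thus properties (i), (iii)--(vi) hold for $(\hat\zeta, \hat\psi, \hat\varphi)$ with half the budget.

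Let $\Imc \doteq \{ i : \hat\pi(t_i) > c_i \}$, which is finite since the partition has only $N$ points. For $i \in \Imc$, continuity of $\hat\zeta$ together with \eqref{eq:pi_upper_semicts} forces $\hat\zeta_{c_i+1}(t_i) = 1$ while $\hat\zeta_{c_i+1}(t) < 1$ on $(t_i, t_{i+1})$, so $\hat\zeta_{c_i+1}$ is strictly decreasing on some right neighborhood of $t_i$. For each such $i$, the plan is to replace $(\hat\zeta, \hat\psi, \hat\varphi)$ on $[t_i, t_i + \Delta t_i]$ by a new triple in which (a) the modified control has $\varphi_0 \equiv 0$ on $[t_i, t_i + \Delta t_i]$; (b) the other controls $\varphi_k$, $k \ge 1$, are adjusted so that the new $(\zeta, \psi)$ remains in $\clc_T$, satisfies all reflection conditions, and has $\pi = c_i$ on $(t_i, t_i + \Delta t_i)$; and (c) the new $(\zeta, \psi)$ is glued continuously to the unmodified $(\hat\zeta, \hat\psi)$ on $[t_i + \Delta t_i, t_{i+1}]$. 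The arrivals suppressed on $(t_i, t_i + \Delta t_i)$ are redistributed just outside the surgery window, spread out smoothly so that the cost increment controlled by convexity of $\ell$ is $o(1)$ as $\Delta t_i \to 0$. With $\{\Delta t_i\}_{i \in \Imc}$ chosen so small that the surgery supports are pairwise disjoint, the total sup-norm perturbation is at most $\sigma/2$, and the total additional $\ell$-cost is at most $\sigma/2$, define $(\zeta, \psi, \varphi)$ as the result of performing all these surgeries on $(\hat\zeta, \hat\psi, \hat\varphi)$.

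Property (i) is preserved because the partition and the $c_i$ are untouched, (ii) holds by construction, (iii) follows from the triangle inequality combining the two $\sigma/2$ budgets, and (vi) remains valid since the surgeries modify $\varphi_0$ and only finitely many $\varphi_k$ with $k$ bounded by $M$ (or by an integer depending on the $c_i$). Properties (iv) and (v) hold by the cost control built into the surgery. The main obstacle is the surgery construction itself: one must design auxiliary controls so that suppressing arrivals on $(t_i, t_i + \Delta t_i)$ does not force any $\hat\zeta_k$, $k \le c_i$, to drop below $1$ (which would violate the $\pi = c_i$ condition), and so that the trajectory can be glued back to $\hat\zeta$ at $t_i + \Delta t_i$. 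The feasibility of this hinges on choosing $\Delta t_i$ small enough that $\hat\zeta$ stays in a neighborhood of $\hat\zeta(t_i)$ on the surgery interval and on exploiting the strict decrease of $\hat\zeta_{c_i+1}$ there to provide a uniform buffer away from the reflection boundary at level $c_i+1$; the cascaded nature of the reflection through levels $1, \ldots, c_i$ then allows the needed compensation to be implemented by a bounded and finitely-supported perturbation of $\hat\varphi$.
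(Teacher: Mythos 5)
Your high-level plan—apply Lemma \ref{lem:uniqueness_preparation} to secure properties (i), (iii), (iv), then perform local surgeries near boundary times where $\hat\pi(t_i)>c_i$ to force property (ii)—matches the paper's overall strategy. But the surgery mechanism you propose has a genuine gap, and it is precisely here that the paper's proof does all its real work.

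You write that ``the arrivals suppressed on $(t_i,t_i+\Delta t_i)$ are redistributed just outside the surgery window, spread out smoothly so that the cost increment controlled by convexity of $\ell$ is $o(1)$,'' while simultaneously claiming the new $(\zeta,\psi)$ ``is glued continuously to the unmodified $(\hat\zeta,\hat\psi)$ on $[t_i+\Delta t_i,t_{i+1}]$.'' These two requirements conflict. If you relocate the arrival mass $\lambda\int_{t_i}^{t_i+\Delta t_i}\hat\varphi_0$ to another subinterval, the integral identities \eqref{eq:psi1}--\eqref{eq:psii} together with the Skorokhod relation \eqref{eq:zeta_psi_eta} force a cascade of changes in $\psi_k$, $\eta_k$ and hence in $\zeta_{k}$ on that new subinterval as well, so $(\zeta,\psi)$ will not agree with $(\hat\zeta,\hat\psi)$ there. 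The paper does not redistribute arrivals at all; instead it simply suppresses them on the full $\varepsilon$-window and compensates by suppressing \emph{departures from level $K+1$} (with $K=c_i$) on a sub-window $[t_i,t_i+\delta]$, with $\delta$ calibrated by the balance equation \eqref{eq:delta} so that $\zeta_{K+1}(t_i+\varepsilon)=\hat\zeta_{K+1}(t_i+\varepsilon)$ exactly. This ``time-shift'' makes the glue at $t_i+\varepsilon$ seamless without touching the trajectory outside the window. Your proposal lacks this idea, and the convexity-of-$\ell$ argument you invoke is not the relevant tool: setting $\varphi_0=0$ already costs only $\lambda\ell(0)\Delta t_i\to 0$, so there is nothing for convexity to save; what is actually needed is the calibrated balance between removed arrivals and removed departures.

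A second missing ingredient is the preliminary modification ensuring $\hat\varphi_K\equiv 0$ on the window when $K>0$. With no arrivals, $\eta_{K-1}'=\eta_K'=0$, so if there are any departures from level $K$ then $\zeta_K$ drops strictly below $1$, which destroys $\pi\equiv c_i$ and therefore property~(i). You acknowledge this issue (``one must design auxiliary controls so that suppressing arrivals $\ldots$ does not force any $\hat\zeta_k$, $k\le c_i$, to drop below $1$'') but the proposed cure—``exploiting the strict decrease of $\hat\zeta_{c_i+1}$'' and ``the cascaded nature of the reflection''—does not explain how. The paper handles this by explicitly absorbing the rate of $\hat\varphi_K$ into $\hat\varphi_0$ via \eqref{eq:etabar_psibar}, which requires its own small cost estimate. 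Finally, you dismiss property~(v) as following ``by the cost control built into the surgery,'' but after modifying the trajectory the old near-optimal control need not remain near-optimal for the new trajectory; the paper needs a separate construction for this (the last paragraph of its proof). Without these three pieces—the departure-suppression/time-shift compensation, the preliminary $\hat\varphi_K\equiv 0$ modification, and the explicit argument for~(v)—the proposal does not close.
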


The proof of this lemma is deferred to Section \ref{sec:uniqueness_preparation}.
We now have all of the ingredients needed to prove Lemma \ref{lem:uniqueness}.
To simplify the notation, let for $(\zeta, \psi) \in \clc_T$ and $t\in [0,T]$,
\begin{align}
	\label{eqn:rkdef}
	r_k(t) \doteq \zeta_k(t) - \zeta_{k+1}(t).
\end{align}
Informally, this represents  the fraction of queues with length $k$ at time instant $t$.

\begin{proof}[Proof of Lemma \ref{lem:uniqueness}]
	We first give the basic idea of the proof. 
We will refer to Lemma \ref{lem:uniqueness_preparation_2}(i)--(vi) simply as properties (i)--(vi), since they will be frequently used.
Fix $(\zeta^*, \psi^*) \in \clc_T$ with $I_T(\zeta^*, \psi^*)<\iy$.
We will begin by approximating this trajectory by a more regular trajectory $(\zeta, \psi)$ of the form given  in Lemma \ref{lem:uniqueness_preparation_2}.
Next, we will make additional modifications to the associated control so that one has the desired uniqueness property in part (c).
Ultimately the uniqueness will be argued through a proof by contradiction. 
The purpose of these further modifications to the controls is to ensure that, should there be two possible trajectories driven by the 
modified control, then the time derivative of the state process will be the same in a small time interval following the moment at which 
these trajectories diverge, triggering the contradiction.
This is accomplished by introducing ``$\varepsilon$-gaps'' in the spatial thinning used to define the control process.
The basic idea is to set the controls equal to zero in $\varepsilon$-neighborhoods of each $r_k(t)$ and, in some cases, at the 
boundary between regions, so that a small divergence from $r_k$ does not cause a change in the time derivative of the state process.
 The controls are also reweighted so that they do not change the state trajectory $(\zeta, \psi)$. 
We then show that this modified control increases the cost only slightly and has all the desired properties in Lemma~\ref{lem:uniqueness}.
	
	\noindent {\em Approximating $(\zeta, \psi)$ by a more regular trajectory.}
Fix $\sigma \in (0,1)$.
	Let $(\zeta,\psi) \in \clc_T$ be as in Lemma \ref{lem:uniqueness_preparation_2} with $(\tilde \zeta, \tilde \psi)$ replaced with
	$(\zeta^*, \psi^*)$ and $\sigma$ replaced with $\sigma/3$. Denote the associated control by $\hat{\varphi}$. Thus $\hat{\varphi} \in \cls_T(\zeta,\psi)$.
	It is now immediate that part $(a)$ and the second inequality in part $(b)$ of Lemma \ref{lem:uniqueness} are satisfied.
	Furthermore, since $\ell$ is a convex function and $\ell(1)=0$ we can assume without loss of generality (and no change to the state trajectory) that
	\begin{align}\label{eqn:onlyTime}
		\begin{split}
		\hat{\varphi}_0(t,y) &= \hat{\rho}_0(t) \\
		\hat{\varphi}_{k}(t,y) &= \hat{\rho}_{k}(t)\one_{[0,r_k(s))}(y)+\one_{[r_k(s),1]}(y), \quad k \in \Nmb.
		\end{split}
	\end{align}
	\noindent {\em An $\varepsilon$-gap modification to the thinning function.}
	Fix $\eps>0$ and define a new control $\varphi^\eps$ with an $\eps$-gap around $r_k(s)$, defined as
	\begin{align}\label{eq:gap}
		\begin{split}
		\varphi^\eps_0(t,y) & = \hat{\varphi}_0(t,y),  \\
		\varphi^\eps_{k}(t,y) & = \frac{\hat{\rho}_{k}(t)}{1-\eps}\one_{[0,(1-\eps)r_k(t))}(y)+\one_{[(1+\eps)r_k(t),1]}(y), \quad k \in \Nmb.
		\end{split}
	\end{align}
	This new control $\varphi^\eps$ results in the same trajectory $(\zeta,\psi)$ and the (possible) increase in cost can be estimated as follows,
	\begin{align*}
		&\sum_{k=0}^\iy\int_{[0,T]\times[0,1]} \left[ \vartheta_k\ell(\varphi^\eps_{k}(s,y)) - \vartheta_k\ell(\hat{\varphi}_{k}(s,y)) \right] ds\,dy\\
		&\quad= \sum_{k=1}^\iy\int_0^T\left[(1-\eps)r_k(s)\ell\left(\frac{\hat{\rho}_{k}(s)}{1-\eps}\right) + 2\eps r_k(s)\ell(0)-r_k(s)\ell\left(\hat{\rho_{k}}(s)\right)\right]ds\\
		&\quad=\sum_{k=1}^\iy\int_0^T r_k(s)\left[\hat{\rho}_{k}(s)\log\left(\frac{1}{1-\eps}\right)+\eps\right]ds \\
		& \quad \le \sum_{k=1}^\iy\int_0^T r_k(s)\left[(\ell(\hat{\rho}_{k}(s))+2)\log\left(\frac{1}{1-\eps}\right)+\eps\right]ds \\
		& \quad \le (I_T(\zeta,\psi) + \frac{1}{3}\sigma)\log\left(\frac{1}{1-\eps}\right) + 2T\log\left(\frac{1}{1-\eps}\right) +  T\varepsilon.
	\end{align*}
Since $I_T(\zeta, \psi)<\infty$ there exists an $\varepsilon_0>0$ such that
	\begin{align}
		\label{eq:cost-compare-1}
		\sum_{k=0}^\iy\int_{[0,T]\times[0,1]} \left[ \vartheta_k\ell(\varphi^\eps_{k}(s,y)) - \vartheta_k\ell(\hat{\varphi}_{k}(s,y)) \right] ds\,dy
		\leq \frac{1}{3}\sigma
	\end{align}
	for all $\eps \le \varepsilon_0$.
	
	\noindent{\em A final modification of controls.} We make one last modification.
	Fix $\varepsilon < \varepsilon_0 \wedge \frac{1}{3T}\sigma$.
	Define $\varphi \doteq \varphi^\varepsilon$ except that, for $t \in (t_i,t_{i+1})$, $i \in \{0,1,\dotsc,N-1\}$, if $c_i \ge 2$, set $\varphi_{c_i-1}(t,y)=\one_{[\varepsilon,1]}(y)$.
	This new control $\varphi$ still satisfies property (ii) and results in the same trajectory, namely $\varphi \in \Smc_T(\zeta,\psi)$, since $\pi(t)=c_i$ for $t\in(t_i,t_{i+1})$ by property (i).
	Furthermore, the control $\varphi$ only incurs a small additional cost which can be estimated as follows:
	\begin{align*}
		&\sum_{k=0}^\iy\int_{[0,T]\times[0,1]} \left[\vartheta_k\ell(\varphi_{k}(s,y)) - \vartheta_k\ell(\varphi_{k}^\eps(s,y)) \right] ds\,dy \le T \varepsilon \ell(0) \le \frac{1}{3}\sigma.
	\end{align*}
	Combining this, \eqref{eq:cost-compare-1}, and property (v), yields
	\begin{align*}
		\sum_{k=0}^\iy\int_{[0,T]\times[0,1]} \vartheta_k\ell(\varphi_{k}(s,y)) \,ds\,dy
		&  \le \sum_{k=0}^\iy\int_{[0,T]\times[0,1]} \vartheta_k\ell(\varphi^\eps_{k}(s,y)) \,ds\,dy + \frac{1}{3}\sigma \\
		&  \le \sum_{k=0}^\iy\int_{[0,T]\times[0,1]} \vartheta_k\ell(\hat\varphi_{k}(s,y)) \,ds\,dy + \frac{2}{3}\sigma \\
		&  \le I_T(\zeta,\psi) + \sigma,
	\end{align*}
	which completes the proof of part $(b)$.

	We now show that with the above choice of $\varphi$,  part $(c)$ holds. 
	Suppose there is another pair $(\ti\zeta,\ti\psi)\in\Cmc_T$ such that $\varphi\in\cls_T(\ti\zeta,\ti\psi)$.
	Define time $\tau$ by
	$$\tau \doteq \inf\{t \in [0,T] : (\ti\zeta(t),\ti\psi(t)) \ne (\zeta(t),\psi(t))\} \wedge T.$$
	We argue by contradiction and suppose that $\tau < T$. Then  $\tau \in [t_i,t_{i+1})$ for some $i \in \{0,1,\dotsc,N-1\}$.
	Let $\Ktil \doteq \pi(t_i) \ge K \doteq c_i$ where the inequality is a consequence of the upper semicontinuity of $\pi$ (see \eqref{eq:pi_upper_semicts}).
	By continuity, we have $(\ti\zeta(\tau),\ti\psi(\tau),\etatil(\tau)) = (\zeta(\tau),\psi(\tau),\eta(\tau))$.
	We claim that there exists some $\delta \in (0,t_{i+1}-\tau)$ such that
	\begin{align}
		& \etatil_k(t) = \eta_k(t), \quad t \in [\tau,\tau+\delta], \quad k \in \Nmb. \label{eq:claim_uniqueness_2}
	\end{align}
	Assuming for the moment that the claim holds, we now complete the proof of part $(c)$.
	Define $Y$ and $\Del_k$ by
	\begin{align*}
		Y(t) &\doteq \sum_{k=0}^\infty | \rtil_k(t) - r_k(t) |, \\
		\Delta_k(t) &\doteq \zetatil_k(t) - \zeta_k(t), \quad k \in \Nmb_0
	\end{align*}
	where $r_k$ and $\rtil_k$ are as defined in \eqref{eqn:rkdef} using $\zeta$ and $\zetatil$ respectively.
	We first argue that for $Y(t)$, the differentiation, under the summation over $k$, with respect to $t$ is valid for a.e.\ $t \in [\tau,\tau+\delta]$.
	From the dominated convergence theorem, it suffices to give a summable bound on $\frac{d}{dt}|\rtil_k(t) - r_k(t)|$ for $k \ge M \vee (\Mtil+2)$,
	where $M$ is as in property (vi) and $\Mtil \doteq \min\{j \in \Nmb: \sup_{t \in[0,T]} \zeta_j(t) < 1, \sup_{t \in[0,T]} \zetatil_j(t) < 1\} $ ( which is finite by property (iii) of $\Cmc_T$). 
	Note that for $k \ge M \vee (\Mtil+2)$ and a.e.\ $t \in [0,T]$, $\eta_{k-1}(t)=\eta_k(t)=\eta_{k+1}(t)=\etatil_{k-1}(t)=\etatil_k(t)=\etatil_{k+1}(t)=0$. Hence,
	\begin{align*}
		\frac{d}{dt} |\rtil_k(t)-r_k(t)| & = \frac{d}{dt} |\psitil_k(t) - \psitil_{k+1}(t) - \psi_k(t) + \psi_{k+1}(t)| \\
		& \le \rtil_k(t)+\rtil_{k+1}(t)+r_k(t)+r_{k+1}(t) \le \zetatil_k(t) + \zeta_k(t) \le 2x_k,
	\end{align*}
	where the equality uses \eqref{eqn:rkdef} and \eqref{eq:zeta_psi_eta}, the first inequality uses \eqref{eq:psii} and property (vi), and the last inequality uses \eqref{eq:psii} again.
	Since $x_k$ is summable by Remark \ref{rmk:summable}, we have the desired property of differentiation under the summation.

	Differentiating $Y$ we have that for a.e. $t \in [\tau,\tau+\delta]$,
	\begin{align*}
		Y'(t) & = \sum_{k=0}^\infty \left[ ( \rtil_k'(t) - r_k'(t) ) \one_{\{\rtil_k(t) > r_k(t)\}} + ( r_k'(t) - \rtil_k'(t) ) \one_{\{\rtil_k(t) < r_k(t)\}} \right] \\
		& = \sum_{k=0}^\infty \left[ ( \Delta_k'(t)-\Delta_{k+1}'(t) ) \one_{\{\rtil_k(t) > r_k(t)\}} + ( \Delta_{k+1}'(t)-\Delta_k'(t) ) \one_{\{\rtil_k(t) < r_k(t)\}} \right] \\
		& = \sum_{k=1}^\infty \Delta_k'(t) \left[ \one_{\{\rtil_k(t) > r_k(t)\}} - \one_{\{\rtil_k(t) < r_k(t)\}} - \one_{\{\rtil_{k-1}(t) > r_{k-1}(t)\}} + \one_{\{\rtil_{k-1}(t) < r_{k-1}(t)\}} \right],
	\end{align*}
	where the last line follows from rearranging terms and the fact that $\Delta_0'(t)=0$.
	For any $k \ge 1$, when $\rtil_k(t) > r_k(t)$, we have
	\begin{align*}
		& \one_{\{\rtil_k(t) > r_k(t)\}} - \one_{\{\rtil_k(t) < r_k(t)\}} - \one_{\{\rtil_{k-1}(t) > r_{k-1}(t)\}} + \one_{\{\rtil_{k-1}(t) < r_{k-1}(t)\}} \\
		&\qquad = 1 - \one_{\{\rtil_{k-1}(t) > r_{k-1}(t)\}} + \one_{\{\rtil_{k-1}(t) < r_{k-1}(t)\}} \ge 0
	\end{align*}
	and
	\begin{equation*}
		\Delta_k'(t) = \zetatil_k'(t)-\zeta_k'(t) = \psitil_k'(t) - \psi_k'(t) = \int_0^1 \left( \one_{[0,r_{k}(t))}(y) - \one_{[0,\rtil_{k}(t))}(y) \right) \varphi_{k}(t,y)\,dy \le 0
	\end{equation*}
	for a.e.\ $t \in [\tau,\tau+\delta]$, by \eqref{eq:claim_uniqueness_2} and \eqref{eq:zeta_psi_eta}.
	Similarly, when $\rtil_k(t) < r_k(t)$, we have
	\begin{align*}
		& \one_{\{\rtil_k(t) > r_k(t)\}} - \one_{\{\rtil_k(t) < r_k(t)\}} - \one_{\{\rtil_{k-1}(t) > r_{k-1}(t)\}} + \one_{\{\rtil_{k-1}(t) < r_{k-1}(t)\}} \le 0
	\end{align*}
	and $\Delta_k'(t) \ge 0$ for a.e.\ $t \in [\tau,\tau+\delta]$.
	From these we have $Y'(t) \le 0$ and hence $Y(t)=Y(\tau)=0$ for $t \in [\tau, \tau+\delta]$.
	Therefore $\rtil_k(t)=r_k(t)$ and hence $\zetatil_k(t)=\zeta_k(t)$ for all $t \in [\tau,\tau+\delta]$ and $k \ge 1$.
	Thus, we have shown that $\zetatil(t) = \zeta(t)$, and therefore $\psitil(t) = \psi(t)$, for $t \in [\tau,\tau+\delta]$.
	 This contradicts the definition of $\tau$ and completes the proof of part $(c)$.

	Finally we verify the claim that there exist $\del$ such that \eqref{eq:claim_uniqueness_2} holds.
	Recall that $\tilde K = \pi(t_i)$ and $K=c_i$.
	Consider the following two possible cases: (1) $\tau=t_i$ and $\Ktil > K$ and (2) $\tau \in (t_i,t_{i+1})$ or $\Ktil=K$.

	\underline{Case (1): $\tau=t_i$ and $\Ktil > K$.}
	In this case, we simply take $\delta = \Delta t_i$.
	From property (ii) $\varphi_0(t,y)=0$ for $t \in (\tau,\tau+\delta)$ and so we have $\psi_k'(t) \le 0$ and $\psitil_k'(t) \le 0$ for each $k \ge 1$.
	Therefore, $\eta_k, \tilde \eta_k$ stay constant over $[\tau, \tau+\delta]$ and hence \eqref{eq:claim_uniqueness_2} holds.

	\underline{Case (2): $\tau \in (t_i,t_{i+1})$ or $\Ktil=K$.}
	In this case $\pi(\tau)=K$ and hence $\rtil_K(\tau) = r_K(\tau) > 0$ and $\rtil_k(\tau)=r_k(\tau)=0$ for $k \le K-1$.
	Recall the fixed $\varepsilon$ introduced below \eqref{eq:cost-compare-1}.
	By continuity, there exists some $\delta \in (0,t_{i+1}-\tau)$ such that for $t \in [\tau,\tau+\delta]$,
	\begin{align}
		& \rtil_K(t)>0, \: r_K(t)>0, \label{eq:unique_1} \\
		& |\rtil_K(t) - r_K(t)| \le \varepsilon r_K(t), \label{eq:gap_K}\\
		& \rtil_k(t)<\varepsilon, r_k(t)<\varepsilon, \quad k \le K-1. \label{eq:gap_K-1}
	\end{align}
	The first inequality in \eqref{eq:unique_1} implies that $\pitil(t) \le K$ for $t \in [\tau,\tau+\delta]$, where $\pitil(t) \doteq \max \{k:\zetatil_k(t) = 1\}$.
	Fix $t \in [\tau,\tau+\delta]$.
	Since $\pi(t)=K \ge \pitil(t)$, we have $\zeta_{K+1}(t)<1$, $\zetatil_{K+1}(t)<1$, and hence \eqref{eq:claim_uniqueness_2} holds for $k \ge K+1$ using the fact that $\eta_k, \tilde \eta_k$ do not increase for these coordinates.
	It now remains to show \eqref{eq:claim_uniqueness_2} for $k\le K$.
	We consider the following three different cases.

	\underline{$K=0$:} In this case \eqref{eq:claim_uniqueness_2} holds trivially for $k\le K$.

	\underline{$K=1$:} We only need to check \eqref{eq:claim_uniqueness_2} for $k=1$. Note that, for $t \in [\tau,\tau+\delta]$
	\begin{align*}
		\psitil_1(t) & = \psitil_1(\tau) + \lambda \int_{[\tau,t]\times[0,1]}\varphi_0(s,y)\,ds\,dy - \int_{[\tau,t]\times[0,1]} \one_{[0,\rtil_{1}(s))}(y)\varphi_{1}(s,y)\,ds\,dy \\
		& = \psi_1(\tau) + \lambda \int_{[\tau,t]\times[0,1]}\varphi_0(s,y)\,ds\,dy - \int_{[\tau,t]\times[0,1]} \one_{[0,r_{1}(s))}(y)\varphi_{1}(s,y)\,ds\,dy = \psi_1(t),
	\end{align*}
	where the second line uses \eqref{eq:gap_K} and the $\varepsilon$-gap property of $\varphi_1$ in \eqref{eq:gap}.
	By the uniqueness of solutions of the   one-dimensional SP,  $\etatil_1(t)=\eta_1(t)$ for $t \in [\tau,\tau+\delta]$, which gives \eqref{eq:claim_uniqueness_2} for $k=1$.

	\underline{$K \ge 2$:} In this case $\varphi_{K-1}(t,y) = \one_{[\varepsilon,1]}(y)$ (see below \eqref{eq:cost-compare-1}).
	This and \eqref{eq:gap_K-1} yield,
	\begin{align*}
		\psitil_{K-1}'(t) & = \one_{\{K=2\}} \lambda\int_0^1 \varphi_0(t,y)\,dy - \int_0^1 \one_{[0,\rtil_{K-1}(t))}(y)\varphi_{K-1}(t,y)\,dy = \psi_{K-1}'(t) \ge 0
	\end{align*}
	for a.e.\ $t$.
	Therefore, for $t \in [\tau, \tau+\delta]$, $\psitil_{K-1}(t)=\psi_{K-1}(t)$, $1 \ge \zetatil_{K-1}(t) \ge \zetatil_{K-1}(\tau)=1$, and thus
	\begin{align*}
		\zetatil_k(t) & =\zeta_k(t)=1, \quad k \le K-1, \\
		\psitil_k(t) & = \psi_k(t), \quad k \le K-1, \\
		\psitil_K(t) & = \psitil_K(\tau) - \int_{[\tau,t]\times[0,1]} \one_{[0,\rtil_{K}(s))}(y)\varphi_{K}(s,y)\,ds\,dy \\
		& = \psi_K(\tau) - \int_{[\tau,t]\times[0,1]} \one_{[0,r_{K}(s))}(y)\varphi_{K}(s,y)\,ds\,dy = \psi_K(t),
	\end{align*}
	where the first line uses property (i) and the decreasing order of $\zeta_k$, $\zetatil_k$, the second line uses \eqref{eq:psi1} and \eqref{eq:psii},
	and the last line uses \eqref{eq:gap_K} and the $\varepsilon$-gap property of $\varphi_K$ in \eqref{eq:gap}.
	These together imply
	\begin{align*}
		\etatil_k(t) = \eta_k(t), \quad k \le K,
	\end{align*}
	by the uniqueness of solutions of the $K$-dimensional SP (associated with $(\VV^K, R_K)$; see proof of Lemma \ref{lem:SMap}).
	Hence \eqref{eq:claim_uniqueness_2} holds, which completes the proof.
\end{proof}

\subsection{Proof of Lemma \ref{lem:uniqueness_preparation_2}}\label{sec:uniqueness_preparation}

In this section we present the proof of Lemma \ref{lem:uniqueness_preparation_2}.
A key ingredient is Lemma \ref{lem:uniqueness_preparation} in which we show that one can suitably
approximate a $(\tilde \zeta, \tilde \psi) \in \clc_T$ that has a finite cost by a more regular trajectory for which 
 the length of the shortest queue $\pi(t)$ switches only a finite number of times.
This is accomplished by ``smoothing'' out small excursions during which $\pi(t)$ may change an infinite number of times.
Proof of Lemma \ref{lem:uniqueness_preparation} uses an inductive argument and for ease of presentation we first present the key inductive step separately in Lemma~\ref{lem:uniqueness_preparation_0}. Using this result we then complete the proof of
Lemma~\ref{lem:uniqueness_preparation}.
Finally, we use Lemma~\ref{lem:uniqueness_preparation} to complete the proof of Lemma \ref{lem:uniqueness_preparation_2}.
\begin{lemma}
	\label{lem:uniqueness_preparation_0}
	Fix $\sigma \in (0,1)$ and an integer $K \ge 2$. Suppose $(\zetatil,\psitil)\in\clc_T$ and $I_T(\zetatil,\psitil)<\iy$. Further suppose that there is
	a $\tilde N \in \NN$ and  a finite partition $0=\ttil_0 < \ttil_1 < \dotsb < \ttil_\Ntil = T$  such that on each $(\ttil_i,\ttil_{i+1})$, $\pitil(t)$ is either less than $K$, or some constant $\ctil_i \ge K$.
	Then there exists  a  $(\zeta,\psi)\in\clc_T$ such that
	\begin{enumerate}[a)]
	\item
		There exists a $N \in \NN$ and a finite partition $0=t_0 < t_1 < \dotsb < t_N = T$, such that on each $(t_i,t_{i+1})$, $\pi(t)$ is either less than $K-1$, or some constant $c_i \ge K-1$.
	\item
		$\|(\zeta,\psi)-(\zetatil,\psitil)\|_\infty \le \sigma$.
	\item
		$I_T(\zeta,\psi) \leq I_T(\zetatil,\psitil)+\sigma$.
	\end{enumerate}

\end{lemma}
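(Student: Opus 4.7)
The plan is to perform a surgery on $(\zetatil, \psitil)$ on each interval $(\ttil_i, \ttil_{i+1})$ so that the resulting set $\{t : \zeta_{K-1}(t) = 1\}$ has only finitely many connected components. Intervals on which $\pitil$ is already constant $\ctil_i \ge K$ need no work since they already satisfy (a) with $c_i = \ctil_i \ge K-1$, so I fix an interval $I = (\ttil_i, \ttil_{i+1})$ on which $\pitil(t) < K$. Then $\zetatil_K(t) < 1$ throughout $I$, and $\pitil(t) = K-1$ precisely on the closed set $A = \{t \in \bar I : \zetatil_{K-1}(t) = 1\}$; its complement in $\bar I$ is a countable disjoint union of open excursions $(\alpha_j, \beta_j)$ with $\zetatil_{K-1}(\alpha_j) = \zetatil_{K-1}(\beta_j) = 1$, and both $A$ and this complement may \emph{a priori} have infinitely many components.

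Fix a threshold $\delta > 0$ depending on $\sigma$ and separate the excursions into \emph{long} (length $\ge \delta$) and \emph{short} (length $< \delta$). Only finitely many excursions are long. My strategy is to fill in each short excursion so that the new $\zeta_{K-1}$ equals $1$ on the whole of $[\alpha_j, \beta_j]$; then the new set $\{t : \zeta_{K-1}(t) = 1\}$ contains the union of $A$ with every filled short excursion, and its complement in $I$ consists of only the finitely many long excursions, yielding the partition required in (a). Concretely, fix a near-optimal $\varphitil \in \Smc_T(\zetatil, \psitil)$ with cost at most $I_T(\zetatil, \psitil) + \sigma/2$ and, on each short excursion, augment the arrival control $\varphitil_0$ by a nonnegative perturbation $g_j(t)$ calibrated so that the extra arrivals, after cascading through the reflection terms $\eta_1, \dotsc, \eta_{K-2}$, exactly maintain $\zeta_{K-1}(t) \equiv 1$ on $[\alpha_j, \beta_j]$. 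Continuity at $\partial(\alpha_j, \beta_j)$ is automatic since the filled value $1$ matches $\zetatil_{K-1}$ there, and the modified trajectory agrees with $(\zetatil, \psitil)$ off the short excursions.

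The main obstacle is verifying (c), bounding the total cost increment over potentially infinitely many short excursions. By convexity of $\ell$, each excursion contributes at most $\int_{\alpha_j}^{\beta_j} \lambda\, g_j(t) \log(\varphitil_0(t,\cdot) + g_j(t))\,dt$, and choosing $g_j$ uniformly bounded by a small constant $C_0 = C_0(\sigma)$ yields a total of at most $C_0 \log(\cdot) \cdot \sum_j h_j \le C_0 \log(\cdot) \cdot T$, which can be made $\le \sigma/2$ by choosing $C_0$ (hence $\delta$) small. The super-linearity of $\ell$ provided by Lemma \ref{lem:ellProp} is used to isolate and handle the regions where $\varphitil_0$ is large. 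Property (b) follows from a uniform $L^\infty$ estimate on the perturbation propagated to coordinates $k \ge K$ via the Lipschitz bound of Lemma \ref{lem:SMap}, which is summable in the weighted norm. I expect the trickiest step to be treating those excursions whose deficit $\int (1 - \zetatil_{K-1})$ is too large to be filled by a uniformly bounded $g_j$-rate; such excursions must be handled by first restricting $\delta$ further or by a separate iterative refinement, exploiting that only finitely many excursions can be simultaneously long enough or deep enough to prevent cheap filling.
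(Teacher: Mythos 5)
Your surgery idea — isolate the short excursions of $\pitil$ below $K-1$, fill them in so that $\zeta_{K-1}\equiv 1$, and keep the long excursions as finitely many intervals — is the same high-level decomposition the paper uses. However, the mechanism you propose for filling (augmenting the arrival control $\varphitil_0$ by a perturbation $g_j$) has two genuine gaps that the paper's actual construction sidesteps.

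First, holding $\zeta_{K-1}\equiv 1$ by pumping arrivals creates a coupling with the coordinate $K$. When $\zeta_{K-1}=1$ on an interval, any excess of arrivals over departures from length-$(K-1)$ queues increases $\eta_{K-1}$, and since $\zeta_K = \psi_K + \eta_{K-1} - \eta_K$, this perturbs $\zeta_K$ away from $\zetatil_K$. To avoid this you would need $g_j$ chosen so that the rate of arrivals \emph{exactly} matches the rate of departures from length-$(K-1)$ queues, that is $\lambda\int_0^1\varphi_0(t,y)\,dy = \int_0^1 \one_{[0,r_{K-1}(t))}(y)\varphitil_{K-1}(t,y)\,dy$ for a.e.\ $t$ in the excursion. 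But this departure rate is controlled by $\varphitil_{K-1}$, which is only known to be integrable (the cost is finite), not bounded; hence $g_j$ cannot be taken uniformly bounded by a small constant $C_0(\sigma)$ as your cost estimate requires. You acknowledge this as "the trickiest step" and suggest shrinking $\delta$ or iterating, but neither resolves the unboundedness of $\varphitil_{K-1}$, nor the possibility that the required $g_j$ is negative (if $\varphitil_0$ already exceeds the new departure rate). Second, even the sign and cascading are delicate: on the excursion all of $\zeta_1,\dotsc,\zeta_{K-1}$ must simultaneously be pinned at $1$, and this requires the reflection terms $\eta_1,\dotsc,\eta_{K-2}$ to all be active and consistent, which is not automatic from a single scalar perturbation $g_j$ of $\varphi_0$.

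The paper avoids both issues by doing the opposite of pumping: on each short excursion it sets $\varphi_0 \equiv 0$ \emph{and} $\varphi_k \equiv 0$ for all $k \le K-1$, while leaving $\varphi_k = \varphitil_k$ for $k \ge K$, and correspondingly defines $\eta$ to be constant over the short excursions. With arrivals off and departures from the bottom $K-1$ levels off, the bottom coordinates are frozen at $1$, the reflection terms $\eta_1,\dotsc,\eta_{K-1}$ do not move, and $\zeta_k = \zetatil_k$ is preserved identically for $k \ge K$. The added cost is exactly $\ell(0)=1$ per shut-off coordinate per unit time, giving a deterministic bound $K(\lambda+1)\varepsilon$ over the excursions, with no dependence on the size of $\varphitil$. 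This is what makes the cost argument close cleanly. Your route could perhaps be repaired, but as written it relies on a uniform bound on $g_j$ that is not available, and it does not address the propagation to coordinate $K$; the paper's "freeze, don't fill" construction is the correct fix.
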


\begin{proof}
	Let $\sigma, K$ and $(\tilde\zeta, \tilde \psi)\in \clc_T$ be as in the statement of the lemma.
	Fix $\varphitil\in\cls_T(\zetatil,\psitil)$ such that
	\begin{equation}
		\label{eq:cost_K}
		\sum_{k=0}^\iy\int_{[0,T]\times[0,1]} \vartheta_k\ell(\varphitil_k(s,y)) \,ds\,dy \le I_T(\zetatil,\psitil) + \frac{\sigma}{2}.
	\end{equation}
	Let $\etatil = \bar \Gamma(\tilde \psi)$, where $\bar \Gamma$ was introduced below Definition \ref{def:Smap}, that is, $(\zetatil,\etatil)$ solves the SP for $\psitil$ associated with the reflection matrix $R_\infty$.
	Since $(\zetatil_k,\psitil_k)_{k=1}^{K-1}$ are  uniformly continuous on $[0,T]$, there exists some $\varepsilon \le \sigma/4\Ntil K(\lambda+1)$ such that
	\begin{equation}
		\label{eq:uniform_continuity}
		\|(\zetatil_k(s_1),\psitil_k(s_1)) - (\zetatil_k(s_2),\psitil_k(s_2))\| \le \frac{\sigma}{4\Ntil K(\lambda+1)}, \quad \mbox{ for all } 1 \le k \le K-1, \: |s_1-s_2| \le \varepsilon.
	\end{equation}
	From the finiteness of the cost we can assume without loss of generality that $\varepsilon$ is such that
	\begin{equation}
		\label{eq:cost_uniform_continuity}
		\sum_{k=0}^\iy\int_{B\times[0,1]} \vartheta_k\ell(\varphitil_k(s,y)) \,ds\,dy
		\le \frac{\sigma}{4\Ntil K(\lambda+1)}, \mbox{ whenever } \leb(B) \le \varepsilon,
	\end{equation}
	where $\leb$ is the Lebesgue measure on $[0,T]$.
	With above preparation, let $(\zeta(0),\psi(0),\eta(0)) \doteq (\zetatil(0),\psitil(0),\etatil(0))$ and consider the interval $(\ttil_i,\ttil_{i+1}]$ for each $i=0,1,\dotsc,\Ntil-1$.
  In the argument that follows we will inductively construct $(\zeta,\psi,\eta,\varphi)$. 
  The sets $\GGnew, B, C, U, U_j$ will be introduced which depend on $i$, however, for ease of notation we will sometimes suppress this dependence on $i$ when it is clear from context.
	Consider the following two possible cases for $\pitil$.

	\underline{Case 1:} $\pitil(t)=\ctil_i \ge K$ for every $t \in (\ttil_i,\ttil_{i+1})$.
	In this case we define	
	\begin{equation}
		\label{eq:case1}
		\zeta(t)\doteq\zetatil(t), \quad \psi(t)\doteq\psi(\ttil_i)+\psitil(t)-\psitil(\ttil_i), \quad \eta(t)\doteq\eta(\ttil_i)+\etatil(t)-\etatil(\ttil_i), \quad \varphi(t,y)=\tilde \varphi(t,y)
	\end{equation}
	for $t \in (\ttil_i,\ttil_{i+1}]$, $y\in [0,1]$.

	\underline{Case 2:} $\pitil(t)<K$ for every $t \in (\ttil_i,\ttil_{i+1})$.
	Consider the set $U$ of time instants in which the shortest queue is less than $K-1$, namely
	\begin{equation*}
		U \doteq \{ t \in (\ttil_i,\ttil_{i+1}) : \pitil(t) < K-1 \}.
	\end{equation*}
	From the upper semi-continuity of $\pitil(t)$ it follows that $U$ is open and hence $U=\bigcup_{j=1}^\iy U_j$ for some disjoint open intervals $U_j$.
	Since the Lebesgue measure $\leb(U)<\ttil_{i+1}-\ttil_i < \infty$,
	we can express $U = \GGnew \cup B$, where $\GGnew\equiv \GGnew_i=\bigcup_{j=1}^m U_j$ is a union of finitely many  $U_j$'s for some $m \in \Nmb$, and $B\equiv B_i=\bigcup_{j=m+1}^\infty U_j$ with $\leb(B) < \varepsilon$.
	Let $C = (\ttil_i,\ttil_{i+1}) \setminus U$  be the set of time instants in $(\ttil_i,\ttil_{i+1})$ at which the shortest queue length is $K-1$, i.e.
	\begin{equation}
		\label{eq:C}
		C \doteq  \{ t \in (\ttil_i,\ttil_{i+1}) : \pitil(t) = K-1 \},
	\end{equation}
	and define the new trajectory as follows.
	When the shortest queue is of length $K-1$ or on a long excursion from $K-1$ (i.e.\ $t \in \GGnew \cup C$) the trajectory remains unchanged.
	Namely, let
	\begin{align}
		\label{eqn:newtraj1}
		\zeta(t) \doteq \zetatil(t), \quad \varphi(t,y) \doteq \varphitil(t,y), \quad t \in \GGnew \cup C, \quad  y \in [0,1].
	\end{align}
	Over short excursions from  $K-1$, namely when $t\in B$, we  ``smooth out'' the trajectories by setting the shortest queue equal to $K-1$ as follows.
	For $t \in B$ and $y \in [0,1]$, let
	\begin{align}
		\label{eqn:newtraj2}
		\begin{split}
		\zeta_k(t) & \doteq 1, \qquad k \le K-1,\qquad\zeta_k(t) \doteq \zetatil_k(t) < 1, \quad k \ge K, \\
		\varphi_0(t,y) & \doteq 0, \\
		\varphi_{k}(t,y) & \doteq 0, \quad k \le K-1, \qquad \varphi_{k}(t,y) \doteq \varphitil_{k}(t,y), \quad k \ge K.
		\end{split}
	\end{align}
	Having defined $\zeta$ and $\varphi$ over $(\ttil_i,\ttil_{i+1})$, let for $t \in (\ttil_i,\ttil_{i+1})$, 
	\begin{align}
		\label{eqn:newtraj3}
		\begin{split}
		\psi_1(t) & \doteq \psi_1(\ttil_i) + \int_{[\ttil_i,t]\times[0,1]}\varphi_{0}(s,y)\,ds\,dy\\
		&\qquad - \int_{[\ttil_i,t]\times[0,1]}\one_{[0,\zeta_1(s)-\zeta_{2}(s))}(y)\varphi_{1}(s,y)\,ds\,dy, \\
		\psi_k(t) & \doteq \psi_k(\ttil_i) - \int_{[\ttil_i,t]\times[0,1]}\one_{[0,\zeta_k(s)-\zeta_{k+1}(s))}(y)\varphi_{k}(s,y)\,ds\,dy,\qquad k\ge 2, \\
		\eta(t) & \doteq \eta(\ttil_i) + \int_{[\ttil_i,t] \setminus B} \, \etatil(ds) = \eta(\ttil_i) + \int_{[\ttil_i,t] \cap (\GGnew \cup C)} \, \etatil(ds),
		\end{split}
	\end{align}
	and define $(\zeta(\ttil_{i+1}),\psi(\ttil_{i+1}),\eta(\ttil_{i+1}))$ by continuity.

	Now we verify that $(\zeta,\psi,\eta)$ is the required trajectory, namely: (1) $(\zeta, \psi) \in \clc_T$, (2) $\eta = \bar \Gamma(\psi)$, (3) $\varphi \in \cls_T(\zeta, \psi)$
	and, (4) parts (a)--(c) of the lemma are satisfied.

	We will refer to $(\ttil_i,\ttil_{i+1})$ as a type 1 (resp. type 2) interval if it corresponds to Case 1 (resp. Case 2) and begin by making the following observations.
	\begin{itemize}
		\item For $k \ge K$, $(\zeta_k, \psi_k, \eta_k)= (\tilde \zeta_k, \tilde \psi_k, \tilde \eta_k)$. Indeed, the equality of the first coordinate $(\zeta_k= \tilde{\zeta}_k)$ is immediate from
		\eqref{eq:case1}, \eqref{eqn:newtraj1}, and \eqref{eqn:newtraj2}. The second coordinate equality $(\psi_k=\tilde\psi_k)$ follows from the fact that if $\psi_k(\tilde t_i) = \tilde \psi_k(\tilde t_i)$
		then, by \eqref{eq:case1} and the second line of \eqref{eqn:newtraj3}, $\psi_k(t) = \tilde \psi_k(t)$ for all $t \in [\tilde t_i, \tilde t_{i+1}]$. Similarly, the equality for the third
		coordinate $(\eta_k=\tilde\eta_k)$ follows from the fact that if $\eta_k(\tilde t_i) = \tilde \eta_k(\tilde t_i)$, then \eqref{eq:case1}, the third line of \eqref{eqn:newtraj3}, and the fact that $\tilde \eta_k$
		stays constant over $[\tilde t_i, \tilde t_{i+1}]$ for a type 2 interval, implies $\eta_k(t) = \tilde \eta_k(t)$ for all $t \in [\tilde t_i, \tilde t_{i+1}]$.
		\item If $(\tilde t_i, \tilde t_{i+1})$ is a type 1 interval, then $\tilde \zeta_k(\tilde t_i) = \zeta_k(\tilde t_i)$ for all $k \in \NN_0$. Indeed, the only case we need to consider is 
		when $k \le K-1$, $(\tilde t_{i-1}, \tilde t_{i})$ is a type 2 interval, and (making the dependence on $i$ explicit) $\tilde t_i \in \bar B_{i-1}$. 
		In this case $\zeta_k(\tilde t_i)=1$ and since $\tilde \pi(\tilde t_i) \ge \tilde \pi(t) \ge K$ for $t\in (\tilde t_i, \tilde t_{i+1})$, we have $\tilde \zeta_k(\tilde t_i)=1$ as well.
		\item The first two observations together with \eqref{eq:case1}, \eqref{eqn:newtraj1}, and \eqref{eqn:newtraj2} show that $\zeta_k$ is absolutely continuous for every $k \in \NN_0$.
		Also by construction, $\psi_k$ and $\eta_k$ are absolutely continuous as well for every $k \in \NN$. 
		\item $\zeta$ clearly satisfies parts (i)--(iii) in the definition of $\clc_T$.
		\item From the definition of $B$, \eqref{eq:case1}, and the third line of \eqref{eqn:newtraj3} we see that $\tilde \eta_{K-1}(t) = \eta_{K-1}(t)$ for all $t\in [0,T]$.
	\end{itemize}
		From the above observations we see that equations \eqref{eq:zeta_psi_eta} and \eqref{eq:psi1}--\eqref{eq:psii}  hold for all $i\ge K$. We now verify that \eqref{eq:zeta_psi_eta} and \eqref{eq:psi1}--\eqref{eq:psii} hold for $1\le i \le K-1$ as well.
		From the definition of $\etatil$, for $k=1, \ldots K$,  
		\begin{align}
			\zetatil_k(t) &= \psitil_k(t) +\etatil_k(t) - \etatil_k(t), \label{eq:eq512}\\
			 \etatil_k & \mbox{ is non-decreasing, } \etatil_k(0)=0, \quad \int_0^t \one_{\{\zetatil_k(s)<1\}} \, \etatil_k(ds) = 0,\quad k=1,\dotsc,K.\nonumber
		\end{align}
			It follows from \eqref{eqn:newtraj3} that for each $t$ in a type 2 interval $(\ttil_i,\ttil_{i+1}]$ and $k \le K$,
			\begin{align}\label{eqn:smallParts}
				\int_{[\ttil_i,t] } \one_{\{\zeta_k(s)<1\}} \, \eta_k(ds)
				= \int_{[\ttil_i,t] \cap (\GGnew \cup C)} \one_{\{\zeta_k(s)<1\}} \, \etatil_k(ds) = \int_{[\ttil_i,t] \cap (\GGnew \cup C)} \one_{\{\zetatil_k(s)<1\}} \, \etatil_k(ds) = 0,
			\end{align}
			where the first equality on the second line is from \eqref{eqn:newtraj1}. From \eqref{eq:case1} it is clear that the above equality also holds when 
			$(\ttil_i,\ttil_{i+1}]$ is a type 1 interval.
			
			It then remains to show
			\begin{equation}
				\label{eq:claim_1}
				\zeta_k(t) = \psi_k(t) +\eta_{k-1}(t) - \eta_k(t), \quad k \le K-1, \quad t \in (\ttil_i,\ttil_{i+1}).
			\end{equation}
			Once again, if $(\ttil_i,\ttil_{i+1})$ is a type 1 interval, from the observation in the second bullet above, $\tilde \zeta_k(\tilde t_i) = \zeta_k(\tilde t_i)$ and thus from \eqref{eq:case1}
			we see that \eqref{eq:claim_1} is satisfied in this case.  We now show \eqref{eq:claim_1}
			for a type 2 interval, given that \eqref{eq:claim_1} holds for $t=\ttil_i$.
For each $k \le K-1$, we have
			\begin{subequations}
				\begin{align}
					\zeta_k(t)-\zeta_k(\ttil_i) & = \int_{\ttil_i}^t \zeta_k'(s) \,ds\notag
					\\
					& = \int_{[\ttil_i,t] \cap (\GGnew \cup C)} \zeta_k'(s) \,ds \label{eqn:long2} \\
					& = \int_{[\ttil_i,t] \cap (\GGnew \cup C)} \zetatil_k'(s) \,ds \label{eqn:long3} \\
					& = \int_{[\ttil_i,t] \cap (\GGnew \cup C)} (\psitil_k'(s) +\etatil_{k-1}'(s) - \etatil_k'(s)) \,ds \label{eqn:long4} \\
					& = \int_{[\ttil_i,t] \cap (\GGnew \cup C)} \psi_k'(s) \,ds + (\eta_{k-1}(t) - \eta_{k-1}(\ttil_i)) - (\eta_{k}(t)-\eta_{k}(\ttil_i)) \label{eqn:long5} \\
					& = \int_{\ttil_i}^t \psi_k'(s) \,ds + (\eta_{k-1}(t) - \eta_{k-1}(\ttil_i)) - (\eta_{k}(t)-\eta_{k}(\ttil_i)) \label{eqn:long6} \\
					& = (\psi_k(t)-\psi_k(\ttil_i)) + (\eta_{k-1}(t) - \eta_{k-1}(\ttil_i)) - (\eta_{k}(t)-\eta_{k}(\ttil_i)), \notag
				\end{align}
			\end{subequations}			
			where line \eqref{eqn:long2} uses \eqref{eqn:newtraj2},
			line \eqref{eqn:long3} uses \eqref{eqn:newtraj1}, 
			line \eqref{eqn:long4} uses \eqref{eq:eq512},
			line \eqref{eqn:long5} uses \eqref{eqn:newtraj1} and \eqref{eqn:newtraj3},
			and line \eqref{eqn:long6} uses \eqref{eqn:newtraj2} and \eqref{eqn:newtraj3}.
			We have thus shown that \eqref{eq:claim_1}  is satisfied.   
			Next, from \eqref{eq:case1}, \eqref{eqn:newtraj1}, \eqref{eqn:newtraj2}, \eqref{eqn:newtraj3} and \eqref{eqn:smallParts} it is clear that 
			$\int_0^T \one_{\{\zeta_k(s)<1\}} \, \eta_k(ds) = 0$. Combining the above observations we now have that $(\zeta, \psi)\in \clc_T$ and that $\eta = \bar \Gamma(\psi)$ proving statements (1) and (2). Also from \eqref{eq:case1} and  \eqref{eqn:newtraj3} it is clear that $\varphi \in \cls_T(\zeta, \psi)$, proving statement (3). 
			
			Finally we prove statement (4), namely that parts (a)--(c) of the lemma hold. For part $(a)$, note that $\GGnew$ and $B \cup C = (\ttil_i,\ttil_{i+1}) \setminus \GGnew$ are both finite unions of disjoint intervals.
	Also, by construction,
	\begin{equation}
		\label{eq:pi_property}
		\pi(t) = K-1 \mbox{ for } t \in B \cup C = (\ttil_i,\ttil_{i+1})\setminus \GGnew\quad\mbox{ and } \quad \pi(t) < K-1 \mbox{ for } t \in \GGnew.
	\end{equation}
	Therefore part $(a)$ holds.

	We now consider part $(b)$.
	Fix $1 \le k \le K-1$.
	For $t \in (\ttil_i,\ttil_{i+1})$, where the latter is a type 2 interval,
	\begin{align*}
		|\psi_k(t) - \psitil_k(t)|
		& \le |\psi_k(\ttil_i) - \psitil_k(\ttil_i)| + \lambda \int_{B\times[0,1]} \left| \varphi_{0}(s,y) - \varphitil_{0}(s,y) \right| ds\,dy \\
		& \quad + \int_{B\times[0,1]} \left| \one_{[0,\zeta_k(s)-\zeta_{k+1}(s))}(y)\varphi_{k}(s,y) - \one_{[0,\zetatil_k(s)-\zetatil_{k+1}(s))}(y)\varphitil_{k}(s,y) \right| ds\,dy \\
		& \le |\psi_k(\ttil_i) - \psitil_k(\ttil_i)| + \int_{B\times[0,1]} \left( \lambda \varphitil_{0}(s,y) + \varphitil_{k}(s,y) \right) ds\,dy \\
		& \le |\psi_k(\ttil_i) - \psitil_k(\ttil_i)| + \int_{B\times[0,1]} \left( \lambda (\ell(\varphitil_{0}(s,y))+2) + \ell(\varphitil_{k}(s,y))+2 \right) ds\,dy \\
		& \le |\psi_k(\ttil_i) - \psitil_k(\ttil_i)| + \frac{\sigma}{4\Ntil K(\lambda+1)} + 2(\lambda+1)\varepsilon,
	\end{align*}
	where the second inequality uses $\varphi_0=\varphi_k=0$ on $B$, the third uses Lemma \ref{lem:ellProp}(b), and the last uses \eqref{eq:cost_uniform_continuity}.
	Also, the above inequality clearly holds  for a type 1 interval.

	Recalling the definition of $\varepsilon$, we have
	\begin{align*}
		\|\psi_k-\psitil_k\|_\infty & \le \Ntil\left(\frac{\sigma}{4\Ntil K(\lambda+1)} + 2(\lambda+1)\varepsilon\right) \le \frac{3\sigma}{4K}.
	\end{align*}
	It follows from \eqref{eqn:newtraj1} that, on a type 2 interval, $\zeta(t)=\zetatil(t)$ for $t \in \GGnew \cup C$.
	While for $t \in B$, we must have $t \in U_j \doteq (u_j,s_j)$ for some $j \in \Nmb$, such that $u_j \in C$ and $|t-u_j|<\varepsilon$.
	It then follows from \eqref{eqn:newtraj2}, \eqref{eq:C}, and \eqref{eq:uniform_continuity} that
	\begin{align*}
		|\zeta_k(t)-\zetatil_k(t)|
		& = |1 - \zetatil_k(t)| =  |\zetatil_k(u_j) - \zetatil_k(t)| \le \frac{\sigma}{4\Ntil K(\lambda+1)} \le \frac{\sigma}{4K}.
	\end{align*}
	Once again, on a type 1 interval the above inequality holds trivially.  
	Combining above estimates gives
	\begin{align*}
		\|(\zeta,\psi)-(\zetatil,\psitil)\|_\infty & \le \sum_{k=1}^{K-1} \|\zeta_k-\zetatil_k\|_\infty + \sum_{k=1}^{K-1} \|\psi_k-\psitil_k\|_\infty \le \frac{\sigma}{4} + \frac{3\sigma}{4} = \sigma.
	\end{align*}
	This verifies part $(b)$.

	Finally we consider part $(c)$.
	Using \eqref{eqn:newtraj1}, \eqref{eqn:newtraj2}, and the definitions of $B$ and $\varepsilon$ we have for a type 2 interval
	\begin{align*}
		& \sum_{k=0}^\iy \int_{[\ttil_i,\ttil_{i+1}]\times[0,1]} \left[\vartheta_k\ell(\varphi_{k}(s,y)) - \vartheta_k\ell(\varphitil_k(s,y))\right]ds\,dy \\
		&\qquad = \sum_{k=0}^{K-1} \int_{B\times[0,1]} \left[\vartheta_k\ell(\varphi_{k}(s,y)) - \vartheta_k\ell(\varphitil_k(s,y))\right]ds\,dy \\
		& \qquad\le K(\lambda+1)\ell(0) \frac{\sigma}{4\Ntil K(\lambda+1)}
		= \frac{\sigma}{4\Ntil}.
	\end{align*}
	The above bound holds clearly for a type 1 interval.
	From this and \eqref{eq:cost_K} we have
	\begin{align*}
		I_T(\zeta,\psi) & \leq \sum_{k=0}^\iy \int_{[0,T]\times[0,1]} \vartheta_k\ell(\varphi_{k}(s,y))\,ds\,dy \\
		& \le \sum_{k=0}^\iy \int_{[0,T]\times[0,1]} \vartheta_k\ell(\varphitil_k(s,y))\,ds\,dy + \frac{\sigma}{4} \\
		& \le I_T(\zetatil,\psitil) + \sigma.
	\end{align*}
	This gives part $(c)$ and completes the proof of the lemma.
\end{proof}
With Lemma~\ref{lem:uniqueness_preparation_0} in hand, we can now use an inductive argument to prove the following Lemma which will play a key role in the proof of Lemma~\ref{lem:uniqueness_preparation_2}.
\begin{lemma}
	\label{lem:uniqueness_preparation}
	Fix $\sigma\in(0,1)$ and  $(\zetatil,\psitil)\in\clc_T$ with $I_T(\zetatil,\psitil)<\iy$. There exists $(\zeta,\psi)\in\clc_T$ such that
	\begin{enumerate}[a)]
	\item
		There exists a $N \in \Nmb$ and a finite partition $0=t_0 < t_1 < \dotsb < t_N = T$ such that $\pi(t)$ is constant over each $(t_i,t_{i+1})$.
	\item
		$\|(\zeta,\psi)-(\zetatil,\psitil)\|_\infty \le \sigma$.
	\item
		$I_T(\zeta,\psi) \leq I_T(\zetatil,\psitil)+\sigma$.
	\end{enumerate}
\end{lemma}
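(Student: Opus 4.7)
The plan is to prove Lemma~\ref{lem:uniqueness_preparation} by a finite induction that applies Lemma~\ref{lem:uniqueness_preparation_0} successively for $K = K_0, K_0-1, \dotsc, 2$, where $K_0$ is a uniform upper bound on $\tilde\pi(t)$ over $[0,T]$. The existence of such a $K_0$ comes directly from the definition of $\clc_T$: since $(\tilde\zeta,\tilde\psi) \in \clc_T$, property (iii) implies there is a smallest $M \in \Nmb$ with $\sup_{t \in [0,T]} \tilde\zeta_M(t) < 1$; setting $K_0 \doteq M$ gives $\tilde\pi(t) < K_0$ for all $t \in [0,T]$, so the trivial one-piece partition $\{0,T\}$ satisfies the hypothesis of Lemma~\ref{lem:uniqueness_preparation_0} at level $K_0$. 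If $K_0 = 1$, then $\tilde\pi \equiv 0$ is already constant and we take $(\zeta,\psi) = (\tilde\zeta,\tilde\psi)$; otherwise proceed with the induction below.

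Set $\sigma_0 \doteq \sigma/(K_0 - 1)$ and define $(\zeta^{(0)}, \psi^{(0)}) \doteq (\tilde\zeta, \tilde\psi)$. For $j = 1,2,\dotsc,K_0 - 1$, apply Lemma~\ref{lem:uniqueness_preparation_0} to $(\zeta^{(j-1)}, \psi^{(j-1)})$ with $K = K_0 - j + 1$ and with the approximation parameter taken to be $\sigma_0$, to obtain a new trajectory $(\zeta^{(j)}, \psi^{(j)}) \in \clc_T$ together with a finite partition on which $\pi^{(j)}(t)$ is either less than $K_0 - j$ or equal to a constant $c_i \geq K_0 - j$. The inductive hypothesis needed for the next iteration is precisely the conclusion of the current one, so the induction is well-posed; and the finiteness of the cost $I_T(\zeta^{(j)},\psi^{(j)}) \le I_T(\tilde\zeta,\tilde\psi) + j\sigma_0 < \infty$ keeps us within the hypothesis class of Lemma~\ref{lem:uniqueness_preparation_0} at every stage. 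Set $(\zeta,\psi) \doteq (\zeta^{(K_0-1)}, \psi^{(K_0-1)})$.

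For part (a), at the last step the alternative reads: on each piece, either $\pi(t) < 1$ (i.e.\ $\pi(t) = 0$) or $\pi(t)$ equals a constant $c_i \ge 1$, and in both sub-cases $\pi$ is constant on that piece, as required. Parts (b) and (c) follow by telescoping: by the triangle inequality in the $\|\cdot\|_\infty$ norm,
\begin{equation*}
	\|(\zeta,\psi) - (\tilde\zeta,\tilde\psi)\|_\infty \;\le\; \sum_{j=1}^{K_0-1} \|(\zeta^{(j)},\psi^{(j)}) - (\zeta^{(j-1)},\psi^{(j-1)})\|_\infty \;\le\; (K_0-1)\sigma_0 \;=\; \sigma,
\end{equation*}
and similarly $I_T(\zeta,\psi) \le I_T(\zeta^{(K_0-2)},\psi^{(K_0-2)}) + \sigma_0 \le \dotsb \le I_T(\tilde\zeta,\tilde\psi) + \sigma$.

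There is no serious obstacle here beyond the bookkeeping: the heavy lifting is entirely in Lemma~\ref{lem:uniqueness_preparation_0}, which implements the single surgery that reduces $K$ by one. The only points that need verification are (i) that $K_0$ depends only on $(\tilde\zeta,\tilde\psi)$ and is finite (which is exactly property (iii) of $\clc_T$), and (ii) that one can apply Lemma~\ref{lem:uniqueness_preparation_0} with shrinking tolerance $\sigma_0$ and still have a finite total error — this is why we split $\sigma$ evenly across the $K_0 - 1$ iterations. The partition produced at each step may have more intervals than the previous one, but since each iteration produces only finitely many new intervals, the final partition is still finite.
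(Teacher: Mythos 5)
Your proof is correct and follows essentially the same inductive strategy as the paper: identify a finite $M$ bounding $\tilde\pi$, then apply Lemma~\ref{lem:uniqueness_preparation_0} in descending order from $K = M$ down to $K = 2$, splitting the error budget evenly across iterations. The only cosmetic difference is that the paper allocates tolerance $\sigma/M$ per step (giving total $(M-1)\sigma/M < \sigma$) while you use $\sigma/(M-1)$ (giving total exactly $\sigma$); both choices are valid.
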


\begin{proof}
	Let $M \in \Nmb$ be the smallest nonnegative integer such that
	$$ \frac{1}{M+1} \sup_{0 \le t \le T} \sum_{k=0}^M \zetatil_k(t) < 1$$
	and thus
	$$\sup_{0 \le t \le T} \zetatil_M(t) \le \frac{1}{M+1} \sup_{0 \le t \le T} \sum_{k=0}^M \zetatil_k(t) < 1.$$
	Existence of such a $M$ is a consequence of property (iii) of $\clc_T$.
	Therefore $\pitil(t) \doteq \max \{k:\zetatil_k(t) = 1\} < M$ for every $t \in [0,T]$.
	Let $(\zeta^M,\psi^M) \doteq (\zetatil,\psitil)$.
	For $K=M,M-1,\dotsc,2$, apply  Lemma \ref{lem:uniqueness_preparation_0} to $(\zeta^K,\psi^K)$ recursively, with $\sigma$ there replaced by $\sigma/M$, to get $(\zeta^{K-1},\psi^{K-1})$.
	Then $(\zeta,\psi)\doteq(\zeta^1,\psi^1)$ is the desired trajectory, on noting that over each $(t_i,t_{i+1})$, $\pi(t)$ is either less than $1$, which means $\pi(t)=0$, or $\pi(t)$ is some constant $c_i \ge 1$.
\end{proof}

We now prove Lemma \ref{lem:uniqueness_preparation_2} by further modifying the trajectory in Lemma \ref{lem:uniqueness_preparation} so that it has nice properties when $\pi(t)$ changes.

\begin{proof}[Proof of Lemma \ref{lem:uniqueness_preparation_2}]
	Fix $\sigma\in(0,1)$ and $(\tilde \zeta, \tilde \psi) \in \clc_T$ with $I_T(\tilde \zeta, \tilde \psi)<\infty$.
	By Lemma \ref{lem:uniqueness_preparation}, there exists $(\zetabar,\psibar)\in\clc_T$ such that
	\begin{enumerate}[(a)]
	\item
		There exists a $N\in \Nmb$ and a finite partition $0=\tbar_0 < \tbar_1 < \dotsb < \tbar_\Nbar = T$  such that $\pibar(t)=\cbar_i$ is constant over each $(\tbar_i,\tbar_{i+1})$.
	\item
		$\|(\zetabar,\psibar)-(\zetatil,\psitil)\|_\infty \le \frac{\sigma}{16}$.
	\item
		$I_T(\zetabar,\psibar) \leq I_T(\zetatil,\psitil)+\frac{\sigma}{16}$.
	\end{enumerate}
	Here $\pibar(t) \doteq \max\{k : \zetabar_k(t)=1\}$.
	Let $\Mbar \doteq \max\{\pibar(t) : t \in [0,T]\} < \infty$.
	Then with $\bar \eta = \bar \Gamma(\bar \psi)$, $\bar \eta_k(t)=0$ for all $t \in [0,T]$ and $k > \bar M$.
	Choose $\varphibar \in \Smc_T(\zetabar,\psibar)$ that is $\sigma/16$ optimal so that
	\begin{equation}\label{eq:eq859}\sum_{k=0}^\iy\int_{[0,T]\times[0,1]} \vartheta_k\ell(\varphibar_{k}(s,y)) \,ds\,dy \le I_T(\tilde \zeta,\tilde \psi) + \frac{\sigma}{8}.
		\end{equation}
	We will next modify $(\zetabar,\psibar,\varphibar)$ to get $(\zeta,\psi,\varphi)$ that satisfies properties (ii) and (v) in the statement while preserving properties (i), (iii), and (iv) that are satisfied by $(\zetabar,\psibar)$.
	Finally we will make one additional modification that will  guarantee that property (vi) holds as well.

	Since $(\zetabar_k,\psibar_k)_{k=0}^\Mbar$ are  uniformly continuous on $[0,T]$, there exists some $\varepsilon_\Mbar \in (0,\infty)$ such that
	\begin{equation}
		\label{eq:uniform_continuity_1}
		\|(\zetabar_k(s_1),\psibar_k(s_1))_{k=0}^\Mbar - (\zetabar_k(s_2),\psibar_k(s_2))_{k=0}^\Mbar\| \le \frac{\sigma}{8\Nbar} , \quad \mbox{ whenever } |s_1-s_2| \le \varepsilon_\Mbar.
	\end{equation}
	From the finiteness of the cost in \eqref{eq:eq859} 
	$$\sum_{k=0}^{\Mbar+1} \int_{[0,T]\times[0,1]} \vartheta_k \varphibar_k(t,y) \,dt\,dy \le \sum_{k=0}^{\Mbar+1} \int_{[0,T]\times[0,1]} \vartheta_k (\ell(\varphibar_k(t,y))+2) \,dt\,dy < \infty,$$
	 where the first inequality is from Lemma \ref{lem:ellProp} (b). Thus  we can  assume without loss of generality that $\varepsilon_\Mbar$ is small
	 enough so that
	\begin{equation}
		\label{eq:cost_ac}
		\sum_{k=0}^{\Mbar+1} \int_{B\times[0,1]} \vartheta_k \varphibar_k(t,y) \,dt\,dy \le \frac{\sigma}{16\Nbar}, \quad \mbox{ whenever } \leb(B) \le \varepsilon_\Mbar.
	\end{equation}
	Consider the interval $[\tbar_i,\tbar_{i+1})$ for each fixed $i=0,1,\dotsc,\Nbar-1$.
	We will usually suppress the dependence on $i$ (of $K, \Kbar, \delta$ below) for ease of notation.\\
	
	\noindent {\bf First modification of the trajectory.}

	\underline{Case I:}  {\em Either $\pibar(\bar t_i) = \cbar_i$ or $\int_{[\tbar_i,\tbar_i+\Delta \tbar_i]\times[0,1]} \varphibar_0(s,y)\,ds\,dy=0$ for some $\Delta \tbar_i \in (0,\tbar_{i+1}-\tbar_i)$.}  In this case define, for $t \in (\tbar_i,\tbar_{i+1})$
	\begin{equation}
		\label{eq:case1again}
		\zeta(t)=\zetabar(t), \quad \psi(t)=\psi(\tbar_i)+\psibar(t)-\psibar(\tbar_i), \quad \eta(t)=\eta(\tbar_i)+\etabar(t)-\etabar(\tbar_i), \quad \varphi(t)=\varphibar(t).
	\end{equation}
	
	\underline{Case II:}  {\em $\pibar(\bar t_i) > \cbar_i$ and $\int_{[\tbar_i,\tbar_i+\Delta \tbar_i]\times[0,1]} \varphibar_0(s,y)\,ds\,dy>0$
	for every $\Delta \tbar_i \in (0,\tbar_{i+1}-\tbar_i)$}. In this case we  modify $(\zetabar,\psibar,\varphibar)$ as follows.

	Take $\varepsilon = \varepsilon_\Mbar \wedge \frac{\sigma }{16\Nbar(\lambda+1)} \wedge \min_i (\tbar_{i+1} - \tbar_i)$.
	Let $\Kbar \doteq \pibar(\tbar_i) > \cbar_i \doteq K$.
	
	We first claim that we can assume that $(\bar \zeta, \bar \psi)$   and $\bar \varphi \in \cls_T(\bar \zeta, \bar \psi)$ are such that 
	$\varphibar_K(t)=0$ for $t \in (\tbar_i,\tbar_i+\varepsilon)$ whenever $K>0$, and
	parts (a)--(c) and \eqref{eq:eq859} hold with $\sigma/16$  and $\sigma/8$ replaced with $3\sigma/16$.
	
	To see this, suppose $K>0$.
	For $t \in (\tbar_i,\tbar_i+\varepsilon)$, since $\pibar(t)=K$, we have
	\begin{align*}
		\zetabar_k(t) & = 1, \quad k=0,\dotsc,K, \\
		\rbar_k(t) & = 0, \quad k=0,\dotsc,K-1, \quad \rbar_K(t) > 0, \\
		\etabar_k(t) & = \etabar_k(\tbar_i) + \lambda \int_{[\tbar_i,t]\times[0,1]} \varphibar_0(s,y)\,ds\,dy, \quad k=1,\dotsc,K-1.
	\end{align*}
	Note that the last statement is vacuous when $K=1$.
	The first property (with $k=K$) implies that
	\begin{align*}
		\etabar_{K-1}(t)+\psibar_K(t)-\etabar_{K-1}(s)-\psibar_K(s) \ge 0, \quad \forall\, \tbar_i <s < t < \tbar_i+\varepsilon,
	\end{align*}
	and hence for a.e. $t \in (\tbar_i,\tbar_i+\varepsilon)$
	\begin{equation}
		\label{eq:etabar_psibar}
		\etabar_{K-1}'(t)+\psibar_K'(t) = \lambda \int_0^1 \varphibar_0(t,y)\,dy - \int_0^1 \one_{[0,\rbar_K(t))}(y) \varphibar_K(t,y)\,dy \ge 0.
	\end{equation}
	We now modify $(\bar \zeta, \bar \psi, \bar \varphi)$ as follows.
	Replace $\varphibar_K(t,y)$, $\varphibar_0(t,y)$, and $\psibar_K$, 
	for $t \in (\bar t_i, \bar t_i+ \veps]$, by
	\begin{align*}
		& \varphibar_K^{new}(t,y) = 0,\quad  \varphibar_0^{new}(t,y)=\lambda^{-1}(\etabar_{K-1}'(t)+\psibar_K'(t)), \quad y \in [0,1], \\
		& (\psibar_K^{new})'(t) =  \one_{\{K=1\}} \lambda \int_0^1 \varphibar_0^{new}(t,y)\,dy.
	\end{align*}
	For $t \in (\bar t_i+\veps, \bar t_{i+1})$, we set 
	\begin{align*}
		\varphibar_j^{new}(t,\cdot) &= \varphibar_j(t,\cdot),\ j=0,K,\\
		\psibar_K^{new}(t) - \psibar_K^{new}(\bar t_i+\veps) &= \psibar_K(t) - \psibar_K(\bar t_i+\veps).
	\end{align*}
	Also, for $t \in (\bar t_i, \bar t_{i+1})$, define
	\begin{align*}
		(\psibar_j^{new})'(t) 
		= \psibar'_j(t),\ & j \neq 1, K, \\
		 \varphibar_j^{new}(t,\cdot) = \varphibar_j(t,\cdot),\ & j \neq 0,K, \\
	\zetabar_j^{new}(t) = \zetabar_j(t),\ & j \in \Nmb_0 
	\end{align*}	
	and
	$$(\psibar_1^{new})'(t) = \lambda \int_0^1 \varphibar_0^{new}(t,y)\,dy \mbox{ if } K>1.$$
	It is easy to check that the new trajectory is still in $\Cmc_T$ and has the finite partition property $(a)$ as $(\zetabar,\psibar)$.
	
	The contribution to the difference between the two trajectories over the interval $(\bar t_i, \bar t_i+\veps)$ can be estimated as 
	\begin{equation}
		\label{eq:extra_psi_diff}
		\begin{aligned}
		\int_{\tbar_i}^{\tbar_i+\varepsilon} |\psibar_K'(t)-(\psibar_K^{new})'(t)| \,dt +
		\int_{\tbar_i}^{\tbar_i+\varepsilon} |\psibar_1'(t)-(\psibar_1^{new})'(t)|\,dt&\le 2\lambda \int_{[\tbar_i,\tbar_i+\varepsilon]\times[0,1]} \varphibar_0(t,y) \,dt\,dy\\
		& \le \frac{\sigma}{8\Nbar},
		\end{aligned}
	\end{equation}
	where the last inequality is due to \eqref{eq:cost_ac}.
	
	The additional cost of making such a replacement is
	\begin{align}
		\label{eq:extra_cost}
		\begin{split}
			& \int_{[\tbar_i,\tbar_i+\varepsilon]\times[0,1]} \left( \lambda\ell(\varphibar_0^{new}(t,y)) + \ell(\varphibar_K^{new}(t,y)) - \lambda\ell(\varphibar_0(t,y)) - \ell(\varphibar_K(t,y)) \right)dt\,dy \\
			& \qquad \le \int_{\tbar_i}^{\tbar_i+\varepsilon} \left( \lambda\ell\left(\frac{\etabar_{K-1}'(t)+\psibar_K'(t)}{\lambda}\right) + \ell(0) - \lambda\ell\Big(\int_0^1\varphibar_0(t,y)\,dy\Big) - 0\right)dt \\
			& \qquad \le \int_{\tbar_i}^{\tbar_i+\varepsilon} \left( \lambda\ell(0) + \ell(0)\right)dt = (\lambda+1)\varepsilon \le \frac{\sigma}{16\Nbar},
		\end{split}
	\end{align}
	where the first inequality follows from the convexity of $\ell(\cdot)$ and the second inequality uses \eqref{eq:etabar_psibar} and the fact that $\ell(x)-\ell(y) \le \ell(0)$ for $0 \le x \le y$.
	
Combining the contributions to the errors and cost differences over all intervals $(\bar t_i, \bar t_{i+1})$, we have
$$\|(\bar\zeta^{new},\bar\psi^{new})-(\tilde\zeta,\tilde\psi)\|_\infty \le \|(\bar\zeta^{new},\bar\psi^{new})-(\bar\zeta,\bar\psi)\|_\infty +\frac{\sigma}{16} \le \frac{3\sigma}{16}$$
and 
\begin{align*}
I_T(\bar\zeta^{new},\bar\psi^{new}) &\le \sum_{k=0}^\iy\int_{[0,T]\times[0,1]} \vartheta_k\ell(\varphibar_{k}^{new}(s,y)) \,ds\,dy \\
&\le \sum_{k=0}^\iy\int_{[0,T]\times[0,1]} \vartheta_k\ell(\varphibar_{k}(s,y)) \,ds\,dy + \frac{\sigma}{16}\\
&\le I_T(\tilde\zeta,\tilde\psi) + \frac{3\sigma}{16}.
\end{align*}
We have thus proved the claim.

Abusing notation, we denote $(\bar\zeta^{new},\bar\psi^{new}, \bar \varphi^{new})$ once more as $(\bar\zeta,\bar\psi, \bar \varphi)$ and recall that parts (a)--(c) and
\eqref{eq:eq859} hold with $\sigma/16$ and $\sigma/8$ replaced with $3\sigma/16$.

Since $\varphibar_K(t,y) = 0$ for $t \in (\tbar_i,\tbar_i+\varepsilon)$ when $K>0$, we have
	\begin{equation}
		\label{eq:etabar_K}
		\etabar_k(t) = \etabar_k(\tbar_i) + \lambda \int_{[\tbar_i,t]\times[0,1]} \varphibar_0(s,y)\,ds\,dy, \quad k=1,\dotsc,K.
	\end{equation}

	We now return to constructing our modification of 
	 $(\zetabar,\psibar,\varphibar)$ on $[\tbar_i,\tbar_{i+1})$ under Case II.
	Since  $\zetabar_{K+1}(t)<1$ over $(\bar t_i, \bar t_{i+1})$ and $\bar K >K$ in Case II, we have
	\begin{align*}
		0 &> \zetabar_{K+1}(\tbar_i+\varepsilon) - 1 = \zetabar_{K+1}(\tbar_i+\varepsilon) - \zetabar_{K+1}(\tbar_i)\\
		& = \lambda \int_{[\tbar_i,\tbar_i+\varepsilon]\times[0,1]} \varphibar_0(s,y) \,ds\,dy - \int_{[\tbar_i,\tbar_i+\varepsilon]\times[0,1]} \one_{[0,\rbar_{K+1}(s))}(y) \varphibar_{K+1}(s,y) \,ds\,dy.
	\end{align*}
	Let $\delta$ be  the largest value in $(0,\varepsilon)$ such that
	\begin{equation}
		\label{eq:delta}
		\lambda \int_{[\tbar_i,\tbar_i+\varepsilon]\times[0,1]} \varphibar_0(s,y) \,ds\,dy = \int_{[\tbar_i,\tbar_i+\delta]\times[0,1]} \one_{[0,\rbar_{K+1}(s))}(y) \varphibar_{K+1}(s,y) \,ds\,dy.
	\end{equation}
	We  now modify $(\bar \zeta, \bar \psi, \bar \varphi)$ on the time interval $[\tbar_i,\tbar_{i+1})$ as follows. Let
	\begin{align*}
		\varphi_0(t,y) & = 0, \quad t \in [\tbar_i,\tbar_i+\varepsilon], \\
		\varphi_{K+1}(t,y) & = 0, \quad t \in [\tbar_i,\tbar_i+\delta], \\
		 \varphi_{K+1}(t,y) &= \varphibar_{K+1}(t,y) \one_{[0,\rbar_{K+1}(t))}(y), \quad t \in (\tbar_i+\delta,\tbar_i+\varepsilon], \\
		\varphi_k(t,y) & = \varphibar_k(t,y), \quad t \in [\tbar_i,\tbar_i+\varepsilon], \quad k \ne 0,K+1.\\
		\varphi_k(t,y) & = \varphibar_k(t,y), \quad t \in [\tbar_i+\varepsilon, \bar t_{i+1}) \mbox{ for all } k \in \NN_0.
	\end{align*}
	Having made such a modification to the control over each interval $(\bar t_i, \bar t_{i+1})$, consider
 $(\zeta,\psi,\eta)$ driven by $\varphi$, given (on $(\bar t_i, \bar t_{i+1})$) as follows,
	\begin{align}\label{eqn:trajdef}
		\begin{split}
		\eta_k'(t) & = 0, \quad k \ge 1, \quad t \in [\tbar_i,\tbar_i+\varepsilon], \\
		\zeta_k(t) & = \zetabar_k(t) = 1, \quad \psi_k'(t) = 0, \quad k=0,\dotsc,K, \quad t \in [\tbar_i,\tbar_i+\varepsilon], \\
		\zeta_k(t) & = \zetabar_k(t), \quad \psi_k'(t) = \psibar_k'(t), \quad k \ge K+2, \quad t \in [\tbar_i,\tbar_i+\varepsilon], \\
		\zeta_{K+1}(t) & = 1, \quad t \in [\tbar_i,\tbar_i+\delta], \\
		\zeta_{K+1}(t) & = 1 - \int_{[\tbar_i+\delta,t]\times[0,1]} \one_{[0,\rbar_{K+1}(s))}(y) \varphibar_{K+1}(s,y) \,ds\,dy, \quad t \in (\tbar_i+\delta,\tbar_i+\varepsilon], \\
		\psi_{K+1}'(t) & = 0, \quad t \in [\tbar_i,\tbar_i+\delta], \\
		\psi_{K+1}(t) &= \psi_{K+1}(\tbar_i+\delta) - \int_{[\tbar_i+\delta,t]\times[0,1]} \one_{[0,\rbar_{K+1}(s))}(y) \varphibar_{K+1}(s,y) \,ds\,dy, \quad t \in (\tbar_i+\delta,\tbar_i+\varepsilon], \\
		(\zeta'(t),& \psi'(t),\eta'(t)) = (\zetabar'(t),\psibar'(t),\etabar'(t)), \quad t \in (\tbar_i+\varepsilon,\tbar_{i+1}).
		\end{split}
	\end{align}

	We now check that $(\zeta,\psi) \in \Cmc_T$ and $\varphi \in \Smc_T(\zeta,\psi)$.
	For this it suffices to check the evolution of the $K$-th and ($K+1$)-th coordinates on each $[\tbar_i,\tbar_i+\varepsilon]$.
	
	If $K=0$, then $\psi_0' =\zeta_0'=\eta'=0$ by construction, and it is clear that \eqref{eq:zeta_psi_eta} holds for the $K$-th coordinate.
	
	If $K>0$, then $\zeta_K'=\psi_K'=\eta'=\varphi_0=\varphi_K = \varphibar_K=0$ by construction and the claim made below \eqref{eq:case1again} (which has been verified), showing that \eqref{eq:zeta_psi_eta}, \eqref{eq:psi1}, and \eqref{eq:psii} hold once more for the $K$-th coordinate.
	Therefore we have the desired evolution of the $K$-th coordinate.
	
	For $K \in \Nmb_0$ and $t \in [\tbar_i,\tbar_i+\delta]$, since $\zeta_{K+1}'=\psi_{K+1}'=\eta'=\varphi_0=\varphi_{K+1}=0$, we have \eqref{eq:zeta_psi_eta}, \eqref{eq:psi1}, and \eqref{eq:psii} for the $(K+1)$-th coordinate over this interval.
	As for $t \in (\tbar_i+\delta,\tbar_i+\varepsilon]$, note that
	\begin{align*}
		\zeta_{K+1}(t) - \zetabar_{K+1}(t) & = \left( 1 - \int_{[\tbar_i+\delta,t]\times[0,1]} \one_{[0,\rbar_{K+1}(s))}(y) \varphibar_{K+1}(s,y) \,ds\,dy \right) \\
		& \quad - \left( 1 + \lambda\int_{[\tbar_i,t]\times[0,1]} \varphibar_0(s,y) \,ds\,dy - \int_{[\tbar_i,t]\times[0,1]} \one_{[0,\rbar_{K+1}(s))}(y) \varphibar_{K+1}(s,y) \,ds\,dy \right) \\
		& = \int_{[\tbar_i,\tbar_i+\delta]\times[0,1]} \one_{[0,\rbar_{K+1}(s))}(y) \varphibar_{K+1}(s,y) \,ds\,dy - \lambda\int_{[\tbar_i,t]\times[0,1]} \varphibar_0(s,y) \,ds\,dy \\
		& = \lambda \int_{[t,\tbar_i+\varepsilon]\times[0,1]} \varphibar_0(s,y) \,ds\,dy \ge 0,
	\end{align*}
	where the first equality uses \eqref{eq:etabar_K} and the last equality follows from \eqref{eq:delta}.
	The above equality in particular implies that
	\begin{equation}
		\label{eq:t+eps}
		\zeta_{K+1}(\tbar_i+\varepsilon)=\zetabar_{K+1}(\tbar_i+\varepsilon),
	\end{equation}
	and the inequality $\zeta_{K+1}(t) - \zetabar_{K+1}(t)\ge 0$  together with the fact that $\zeta_{K+2}(t) = \zetabar_{K+2}(t)$ for $t\in(\tbar_i,\tbar_{i+1})$ gives $r_{K+1}(t)  \ge \rbar_{K+1}(t)$ for $t \in (\tbar_i+\delta,\tbar_i+\varepsilon]$.
	From this and the definition of $\varphi_{K+1}, \zeta_{K+1}, \psi_{K+1}$, we see that for $t \in (\tbar_i+\delta,\tbar_i+\varepsilon]$,
	\begin{align*}
		\zeta_{K+1}(t) & = 1 - \int_{[\tbar_i+\delta,t]\times[0,1]} \one_{[0,\rbar_{K+1}(s))}(y) \varphibar_{K+1}(s,y) \,ds\,dy \\
		& = \zeta_{K+1}(\tbar_i+\delta) - \int_{[\tbar_i+\delta,t]\times[0,1]} \one_{[0,r_{K+1}(s))}(y) \one_{[0,\rbar_{K+1}(s))}(y) \varphibar_{K+1}(s,y) \,ds\,dy \\
		& = \zeta_{K+1}(\tbar_i+\delta) - \int_{[\tbar_i+\delta,t]\times[0,1]} \one_{[0,r_{K+1}(s))}(y) \varphi_{K+1}(s,y) \,ds\,dy
	\end{align*}
	and similarly
	\begin{equation*}
		\psi_{K+1}(t) = \psi_{K+1}(\tbar_i+\delta) - \int_{[\tbar_i+\delta,t]\times[0,1]} \one_{[0,r_{K+1}(s))}(y) \varphi_{K+1}(s,y) \,ds\,dy.
	\end{equation*}
	From these two displays along with \eqref{eqn:trajdef} and the observation that $\zeta_{K+1}'-\psi_{K+1}'=0=\eta'$ we have verified \eqref{eq:zeta_psi_eta}, \eqref{eq:psi1}, and \eqref{eq:psii} for the $(K+1)$-th coordinate when $t \in (\tbar_i+\delta,\tbar_i+\varepsilon]$.
	This proves that $(\zeta,\psi) \in \Cmc_T$ and $\varphi \in \Smc_T(\zeta,\psi)$.
	
	Since $\delta$ is the largest value in $(0,\varepsilon)$ such that \eqref{eq:delta} holds, from the definition of $\zeta_{K+1}$ we must have $\zeta_{K+1}(t) < 1$ for $t \in (\tbar_i+\delta,\tbar_i+\varepsilon]$.
	This implies that $\pi(t)=K$ is constant for $t \in (\tbar_i+\delta,\tbar_i+\varepsilon)$.
	
	On $[\tbar_i,\tbar_i+\delta]$, since $\varphi_0(t,y)=0$, $\pi(t)$ must be non-increasing, and hence must be a piecewise constant function which can be decomposed into a finite number of intervals.
	Therefore properties (i) and (ii) hold. 
	
	We now estimate the (possible) increase in cost.
	Note that
	\begin{align*}
		& \int_{[\tbar_i,\tbar_i+\varepsilon]\times[0,1]} \left( \lambda\ell(\varphi_0(s,y)) + \ell(\varphi_{K+1}(s,y)) - \lambda\ell(\varphibar_0(s,y)) - \ell(\varphibar_{K+1}(s,y)) \right)ds\,dy \\
		& \qquad\le \int_{[\tbar_i,\tbar_i+\varepsilon]\times[0,1]} (\lambda+1)\ell(0)\,ds\,dy \le (\lambda+1)\varepsilon.
	\end{align*}
	Therefore the cost after making such modifications to each interval $(\tbar_i,\tbar_i+\varepsilon)$ can be bounded as
	\begin{align}
		\label{eq:cost_estimate_uniqueness_lem}
		\begin{split}
		\sum_{k=0}^\iy\int_{[0,T]\times[0,1]} \vartheta_k\ell(\varphi_{k}(s,y)) \,ds\,dy & \le \sum_{k=0}^\iy\int_{[0,T]\times[0,1]} \vartheta_k\ell(\varphibar_{k}(s,y)) \,ds\,dy + (\lambda+1)\Nbar\varepsilon\\
		&  \le I_T(\zetatil,\psitil) + \frac{3\sigma}{16} + \frac{\sigma}{16} = I_T(\zetatil,\psitil) + \frac{\sigma}{4}.
		\end{split}
	\end{align}
The difference in the trajectories is estimated as follows.
	Since $\zeta_{K+1}(t)$ is a monotone decreasing interpolation of $\zeta_{K+1}(\tbar_i)$ and $\zeta_{K+1}(\tbar_i+\varepsilon)$ for $t \in [\tbar_i,\tbar_i+\varepsilon]$, we have
	\begin{align*}
		\|\zeta_{K+1}(t)-\zetabar_{K+1}(t)\| & \le \|\zeta_{K+1}(\tbar_i)-\zetabar_{K+1}(t)\| + \|\zeta_{K+1}(\tbar_i+\varepsilon)-\zetabar_{K+1}(t)\| \\
		& \le \|\zeta_{K+1}(\tbar_i)-\zetabar_{K+1}(\tbar_i)\| + \frac{\sigma}{8\Nbar} + \|\zeta_{K+1}(\tbar_i+\varepsilon)-\zetabar_{K+1}(\tbar_i+\varepsilon)\| + \frac{\sigma}{8\Nbar} \\
		& = \frac{\sigma}{4\Nbar},
	\end{align*}
	where the second inequality uses \eqref{eq:uniform_continuity_1} and the last equality follows from \eqref{eq:t+eps}.
	Therefore
	\begin{equation}
		\label{eq:diff_zeta}
		\|\zeta-\zetabar\|_\iy \le \frac{\sigma}{4\Nbar}.
	\end{equation}
	For each $k=1,\dotsc,\Mbar+1$ and $t \in [\tbar_i,\tbar_i+\varepsilon]$, using the definition of $\psi_k$ one has
	\begin{align*}
		|\psi_k(t)-\psibar_k(t)|
		& \le |\psi_k(\tbar_i)-\psibar_k(\tbar_i)| + \int_{[\tbar_i,\tbar_i+\varepsilon]\times[0,1]} (\lambda \varphibar_0(s,y) + \varphibar_k(s,y) )\,ds\,dy.
	\end{align*}
	As a result
	\begin{align*}
		\|\psi-\psibar\|_\infty \le \sum_{k=1}^{\Mbar+1} \|\psi_k-\psibar_k\|_\infty \le \sum_{i=0}^{\Nbar-1} \sum_{k=1}^{\Mbar+1} \int_{[\tbar_i,\tbar_i+\varepsilon]\times[0,1]} (\lambda \varphibar_0(s,y) + \varphibar_k(s,y) )\,ds\,dy \le \frac{\sigma}{8}
	\end{align*}
	by \eqref{eq:cost_ac}.
	From this and \eqref{eq:diff_zeta} we have
	\begin{equation*}
		\|(\zeta,\psi)-(\zetabar,\psibar)\|_\infty \le \frac{\sigma}{16} + \frac{\sigma}{4\Nbar}  \le \frac{5\sigma}{16}.
	\end{equation*}
	This combined with $\|(\zetabar,\psibar)-(\zetatil,\psitil)\|_\infty \le 3\sigma/16$ gives property (iii) with $\sigma$ replaced by $\sigma/2$.

\noindent {\bf Second modification of the trajectory.}

	We now introduce one last  modification to $(\zeta,\psi,\varphi)$ so that property (vi) holds.
	Take $M > \Mbar + 2$ large enough such that
	\begin{equation}
		\label{eq:extra_M}
		\sum_{k=M}^\infty \frac{\|(\zeta_k,\psi_k)\|_\infty+2}{2^k} \le \frac{\sigma}{2}.
	\end{equation}
	Define $(\zeta^{new},\psi^{new}) \in \Cmc_T$ and $\varphi^{new} \in \Smc_T(\zeta^{new},\psi^{new})$ by
	\begin{align*}
		(\zeta_k^{new},\psi_k^{new},\varphi_k^{new}) & = (\zeta_k,\psi_k,\varphi_k), \quad k < M, \\
		\varphi_k^{new} & = 1, \quad \zeta_k^{new}(t)=\psi_k^{new}(t) = x_k - \int_0^t (\zeta_k^{new}(s)-\zeta_{k+1}^{new}(s)) \,ds, \quad k \ge M.
	\end{align*}
	Note that
	\begin{align}
		& \|(\zeta^{new},\psi^{new})-(\zeta,\psi)\|_\infty = \sum_{k=M}^\infty \frac{\|(\zeta_k^{new},\psi_k^{new})-(\zeta_k,\psi_k)\|_\infty}{2^k} \le \sum_{k=M}^\infty \frac{\|(\zeta_k,\psi_k)\|_\infty+2}{2^k} \le \frac{\sigma}{2}, \label{eq:diff_new} \\
		& \sum_{k=0}^\iy\int_{[0,T]\times[0,1]} \vartheta_k\ell(\varphi_{k}^{new}(s,y)) \,ds\,dy \le \sum_{k=0}^\iy\int_{[0,T]\times[0,1]} \vartheta_k\ell(\varphi_{k}(s,y)) \,ds\,dy, \label{eq:cost_new}
	\end{align}
	where the last line uses $\ell(1)=0$. 
	Once again, abusing notation, we denote $(\zeta^{new},\psi^{new},\varphi^{new})$ as $(\zeta,\psi,\varphi)$.
Clearly, properties (i), (ii), (iii), (iv) and (vi) are satisfied by the above modification.

For property (v), let $\varphi^\sigma \in \Smc_T(\zeta,\psi)$ be any $\sigma/2$ optimal control. Choose $\delta^*$ sufficiently small so that
$$\sum_{k=0}^\iy \sum_{i=0}^{\bar N-1}\int_{[ t_i, t_i+\delta^*]\times[0,1]} \vartheta_k\ell(\varphi_{k}(s,y)) \,ds\,dy \le \frac{\sigma}{2}.$$
Define, for each $i$, $\varphi^*(t)=\varphi(t)$ for $t \in (t_i,t_i+\delta^*)$, and for $t \in [t_i+\delta^*, t_{i+1})$, let $\varphi_k^*(t)=1$ for $k \ge M$, and $\varphi^*_k(t)=\varphi_k^\sigma(t)$ otherwise.

	Then $\varphi^*$ is a $\sigma$ optimal control, without affecting properties (ii) or (vi). Thus properties (i)--(vi) are satisfied completing the proof of the lemma.
%
%
%
%
%
	%
\end{proof}

\section{Compact Sub-level Sets}\label{sec:compactSets}
In this section we prove the third statement in the proof of Theorem \ref{thm:mainResult}, namely the property that $I_T$ is a rate function.
For this we need to show that for every $M \in \NN$, the set $\Gamma_M \doteq \{(\zeta,\psi)\in \Dmb_{\Rmb^\infty \times \Rmb^\infty}:
I_T(\zeta, \psi)\le M\}$ is compact.
Now fix such a $M$ and a sequence $\{(\zeta^n,\psi^n)\} \subset \Gamma_M$. It suffices to show that the sequence has a convergent subsequence with the limit in the set $\Gamma_M$.
From the definition of $I_T$, it follows that $(\zeta^n,\psi^n)\in\clc_T$
%
%
%
and there exists a control $\varphi^n\in\cls_T(\zeta^n,\psi^n)$ such that for every $n$
\begin{equation}
	\label{eqn:controlBoundCompact}
	\sum_{k=0}^\iy\int_{[0,T]\times[0,1]} \vartheta_k\ell(\varphi^n_k(s,u))\,ds\,dy
	\leq I_T(\zeta^n,\psi^n)+\frac{1}{n}
	\leq M+\frac{1}{n}.
\end{equation}
We follow the convention that $\zeta^n_0 = \psi^n_0 = 1$.
We first show pre-compactness of the sequence $\{(\zeta^n,\psi^n, \varphi^n)\}_{n\in\NN_0}$. 
Recall the compact metric spaces $S_N$, for $N \in \NN$, introduced in Section \ref{sec:tight}.
\begin{lemma}
	\label{lem:tightCompact}
	The sequence
	$\{(\zeta^n,\psi^n, \varphi^n)\}_{n\in\NN_0}$ is pre-compact in $\Cmb_{\Rmb^\infty \times \Rmb^\infty}\times S_{M+1}$.
\end{lemma}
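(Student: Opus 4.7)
The plan is to prove pre-compactness separately for the control component and the trajectory component, following essentially the argument of Lemma \ref{lem:tightness} but in a purely deterministic setting using Arzelà–Ascoli (since all trajectories live in $\Cmb_{\Rmb^\infty \times \Rmb^\infty}$, not just $\Dmb_{\Rmb^\infty \times \Rmb^\infty}$) and the Skorokhod map regularity from Lemma \ref{lem:SMap}.

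First, for the control component, note that \eqref{eqn:controlBoundCompact} implies that for all $n \ge 1$,
\begin{equation*}
\sum_{k=0}^\infty \int_{[0,T]\times[0,1]} \ell(\varphi^n_k(s,y))\, ds\, dy \le \frac{1}{\lambda\wedge 1}\left(M + \frac{1}{n}\right) \le \frac{M+1}{\lambda\wedge 1},
\end{equation*}
so $\{\varphi^n\}$ lies in $S_{M_0}$ for $M_0 = (M+1)/(\lambda\wedge 1)$. Since $S_{M_0}$ is compact, a subsequence of $\{\varphi^n\}$ converges in $S_{M_0}$, and in particular in the ambient space $S_{M+1}$ (after adjusting constants if needed; this is the standard compactness fact used in the paper).

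Second, for the trajectory component I will apply Arzelà–Ascoli in each coordinate. Since $\Rmb^\infty$ carries the product topology, pre-compactness of $\{(\zeta^n,\psi^n)\}$ in $\Cmb_{\Rmb^\infty\times\Rmb^\infty}$ reduces to showing, for every $i \in \Nmb$, pointwise boundedness and uniform equicontinuity of $\{(\zeta^n_i,\psi^n_i)\}$. Boundedness of $\zeta^n_i$ is immediate because $(\zeta^n,\psi^n)\in\clc_T$ implies $\zeta^n_i(t)\in[0,1]$, while boundedness of $\psi^n_i$ follows from \eqref{eq:psi1}--\eqref{eq:psii}, Lemma \ref{lem:ellProp}(b), and Assumption \ref{assump:1}, yielding
\begin{equation*}
\sup_{t\in[0,T]} |\psi^n_i(t)| \le x^n_i + (\lambda+1)(M+1) + 2(\lambda+1)T.
\end{equation*}

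For the equicontinuity of $\psi^n_i$, the key estimate is the analogue of \eqref{eq:aldkurflu}: using \eqref{eq:psi1}--\eqref{eq:psii} and splitting each integrand according to whether $\varphi^n_k \le L$ or $\varphi^n_k > L$, Lemma \ref{lem:ellProp}(a) combined with \eqref{eqn:controlBoundCompact} yields, for any $0 \le s < t \le T$,
\begin{equation*}
|\psi^n_i(t)-\psi^n_i(s)| \le (\lambda+1)L(t-s) + (\lambda+1)(M+1)\gamma(L),
\end{equation*}
where $\gamma(L)\to 0$ as $L\to\infty$. Sending $L\to\infty$ after the diameter shrinks gives the uniform modulus-of-continuity estimate, hence equicontinuity of $\{\psi^n_i\}$ for every $i$. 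For $\zeta^n_i$, since $(\zeta^n,\eta^n)$ solves the Skorokhod problem for $\psi^n$ (with $\eta^n$ the associated reflection), the Lipschitz property established in Lemma \ref{lem:SMap} yields $\|\zeta^n_i(t)-\zeta^n_i(s)\|$ is controlled, coordinate by coordinate, by a finite sum of $\|\psi^n_j(t)-\psi^n_j(s)\|$ over $j\le i$, transferring the equicontinuity from $\psi^n$ to $\zeta^n$. Together with the boundedness above, Arzelà–Ascoli applied coordinate by coordinate and a diagonal extraction produce a subsequence along which $(\zeta^n,\psi^n)$ converges uniformly in each coordinate, i.e.\ in $\Cmb_{\Rmb^\infty\times\Rmb^\infty}$.

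The only mild subtlety is the bookkeeping in passing from a finite-dimensional Skorokhod Lipschitz bound to equicontinuity of each $\zeta^n_i$; but this is routine because for fixed $i$ only the first $i$ coordinates of $\psi^n$ enter the estimate. Combining the two parts via a further diagonal extraction yields the desired convergent subsequence in $\Cmb_{\Rmb^\infty\times\Rmb^\infty} \times S_{M+1}$, completing the proof.
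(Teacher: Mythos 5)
Your proposal follows the same overall route as the paper: extract pre-compactness of the controls from the compactness of $S_M$, then use the cost bound together with Lemma~\ref{lem:ellProp} to establish uniform boundedness and equicontinuity of each $\psi^n_k$ via a truncation-at-level-$K$ argument, apply Arzel\`a--Ascoli coordinate-by-coordinate, and finally transfer pre-compactness to $\zeta^n$ through the Skorokhod map. (Your remark about the constant $M_0 = (M+1)/(\lambda\wedge 1)$ is a slightly more careful treatment of the $\vartheta_k$-weighting than the paper, which simply invokes $S_{M+1}$.)

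The one place where you deviate from the paper, and where your argument is not quite airtight, is the last step. You assert that the Lipschitz property of Lemma~\ref{lem:SMap} yields a bound of the form $|\zeta^n_i(t)-\zeta^n_i(s)| \lesssim \sum_{j\le i}|\psi^n_j(t)-\psi^n_j(s)|$, i.e.\ that equicontinuity of the inputs directly gives equicontinuity of the outputs. But Lemma~\ref{lem:SMap} bounds $\|\Gamma(\psi)-\Gamma(\tilde\psi)\|_\infty$ in terms of $\|\psi-\tilde\psi\|_\infty$ for two \emph{different} inputs; it does not, as stated, give a modulus-of-continuity estimate for a \emph{single} trajectory. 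The reflected process has memory, so one cannot simply time-shift and apply the two-input Lipschitz bound. There is indeed a standard oscillation-preserving property for Harrison--Reiman type Skorokhod maps, but it is a different statement from Lemma~\ref{lem:SMap} and deserves a justification rather than the label ``routine.'' The paper sidesteps this entirely by observing that, since the trajectories are uniformly bounded, the $\|\cdot\|_\infty$ metric on $\Cmb_{\Rmb^\infty}$ metrizes the product topology on the relevant set, so the Lipschitz map $\Gamma$ is continuous there and therefore carries the pre-compact set $\{\psi^n\}$ to a pre-compact set $\{\zeta^n\} = \{\Gamma(\psi^n)\}$; no separate equicontinuity argument for $\zeta^n$ is needed. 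You should either adopt that cleaner step or supply the oscillation estimate you are implicitly relying on.
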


\begin{proof}
	%
	Pre-compactness of $\{\varphi^n\}_{n\in\NN_0}$ is  immediate from the compactness of $S_{M+1}$.

	We next prove pre-compactness of $\{\psi^n_k\}$ for each fixed $k \in \NN$.
	 From the definition of $\psi^n$ in \eqref{eq:psi1}--\eqref{eq:psii} we have 
	 $$ |\psi^n_k(t)| \le 1 + \int_{[0,T]\times[0,1]} \lambda \varphi_0^n(t,y)\,dt\,dy 
	 + \int_{[0,T]\times[0,1]}  \varphi_k^n(t,y)\,dt\,dy.$$
	 It then follows from \eqref{eqn:controlBoundCompact} and Lemma \ref{lem:ellProp} that
	 $$\sup_{n\in \NN} \sup_{t\in [0,T]} |\psi^n_k(t)|<\infty.$$
	 We now show that  $\{\psi^n_k\}$ is equicontinuous. 
	Note that for any $0 < t-s \le  \del$ and $K>0$,
	\begin{align*}
		|\psi_k^n(t)-\psi_k^n(s)|
		&\leq \lambda \int_{[s,t]\times[0,1]}\varphi^n_0(u,y)\,du\,dy  + \int_{[s,t]\times[0,1]} \varphi^n_k(u,y) \,du\,dy\\
		&\leq \lambda \int_{[s,t]\times[0,1]}\varphi^n_0(u,y) \one_{\{\varphi^n_0(u,y)>K\}}du\,dy+
		\lambda \int_{[s,t]\times[0,1]}\varphi^n_0(u,y) \one_{\{\varphi^n_0(u,y)\le K\}}du\,dy\\
		&\qquad +\int_{[s,t]\times[0,1]} \varphi^n_k(u,y)\one_{\{\varphi^n_k(u,y)>K\}} \,du\,dy\\
		&\qquad+ \int_{[s,t]\times[0,1]} \varphi^n_k(u,y)\one_{\{\varphi^n_k(u,y)\leq K\}} \,du\,dy\\
		&\leq (\lambda+1)\gamma(K)(M+1) + (\lambda+1)K\delta
	\end{align*}
	where the final inequality above follows from Lemma \ref{lem:ellProp}(a) and \eqref{eqn:controlBoundCompact}.
	Therefore,
	\begin{equation*}
		\lim_{\del\to\iy}\sup_{n\in\NN}\sup_{|t-s|\leq \del}|\psi_k^n(t)-\psi_k^n(s)|\leq (\lambda+1)\gamma(K)(M+1)
	\end{equation*}
	and equicontinuity of $\{\psi^n_k\}$  follows upon sending $K\to\iy$. Pre-compactness of $\{\psi^n_k\}$ for each $k$, and therefore of $\{\psi^n\}$, now follows from
	the Arzela-Ascoli Theorem.
	Pre-compactness of $\{\zeta^n\}_{n\in\NN}$ in $\Cmb_{\Rmb^\infty}$  follows immediately from the precompactness of $\{\psi^n\}$ and the
	Lipschitz property of the Skorokhod map proved in Lemma \ref{lem:SMap}.
\end{proof}

We now characterize the limit points of $(\zeta^n,\psi^n, \varphi^n)$.

\begin{lemma}
	\label{lem:convergencCompact}
	Suppose $(\zeta^n,\psi^n, \varphi^n)$ converges along a subsequence to $(\zeta,\psi, \varphi)\in \Cmb_{\Rmb^\infty \times \Rmb^\infty}\times S_{M+1}$.
	 Then
	\begin{enumerate}
	\item[a)]
		$\sum_{k=0}^\iy\int_{[0,T]\times[0,1]} \vartheta_k \ell(\varphi_k(s,y))\,ds\,dy\leq M$.
	\item[b)]
		For each $t\in[0,T]$,
		\begin{align*}
			\psi_1(t) & = x_1 + \lambda \int_{[0,t]\times[0,1]}\varphi_0(s,y)\,ds\,dy  - \int_{[0,t]\times[0,1]}\one_{[0,\zeta_1(s)-\zeta_2(s))}(y) \varphi_{1}(s,y) \,ds\,dy, \\
			\psi_k(t) & = x_k - \int_{[0,t]\times[0,1]}\one_{[0,\zeta_k(s)-\zeta_{k+1}(s))}(y) \varphi_k(s,y) \,ds\,dy, \quad k \ge 2.
		\end{align*}
	\item[c)]
		$(\zeta,\psi) \in \Cmc_T$ and $\varphi \in \Smc_T(\zeta,\psi)$.
	\end{enumerate}
\end{lemma}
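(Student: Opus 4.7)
The plan is to mirror the approach used in Lemma \ref{lem:convergence}, observing that the present situation is deterministic and somewhat simpler since there are no martingale correction terms to dispatch. To streamline notation I would assume, without loss of generality, that convergence takes place along the whole sequence.

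For part (a), I would invoke the lower semicontinuity property of the map $\varphi \mapsto \sum_{k=0}^\iy \int_{[0,T]\times[0,1]} \vartheta_k \ell(\varphi_k(s,y))\,ds\,dy$ on the product topology of $S_{M+1}$ (the same property that is cited in the proof of the Laplace upper bound from Lemma A.1 of \cite{budhiraja2013large}). Applying it to \eqref{eqn:controlBoundCompact} and passing to the limit $n\to\iy$ immediately yields the bound $M$.

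For part (b), the argument proceeds by passing to the limit in \eqref{eq:psi1}--\eqref{eq:psii} evaluated at $\varphi^n, \zeta^n, \psi^n$. By uniform convergence $\zeta^n_k\to\zeta_k$, the set $\{(s,y): y=\zeta_k(s)-\zeta_{k+1}(s)\}$ has zero Lebesgue measure and so $\one_{[0,\zeta^n_k(s)-\zeta^n_{k+1}(s))}(y)\to \one_{[0,\zeta_k(s)-\zeta_{k+1}(s))}(y)$ for a.e.\ $(s,y)$. Splitting
\begin{align*}
&\left|\int_{[0,t]\times[0,1]}\one_{[0,\zeta^n_k(s)-\zeta^n_{k+1}(s))}(y)\varphi^n_k(s,y)\,ds\,dy - \int_{[0,t]\times[0,1]}\one_{[0,\zeta_k(s)-\zeta_{k+1}(s))}(y)\varphi_k(s,y)\,ds\,dy\right| \\
&\quad \le \int_{[0,t]\times[0,1]}\bigl|\one_{[0,\zeta^n_k(s)-\zeta^n_{k+1}(s))}(y)-\one_{[0,\zeta_k(s)-\zeta_{k+1}(s))}(y)\bigr|\,\varphi^n_k(s,y)\,ds\,dy \\
&\qquad + \left|\int_{[0,t]\times[0,1]}\one_{[0,\zeta_k(s)-\zeta_{k+1}(s))}(y)(\varphi^n_k(s,y)-\varphi_k(s,y))\,ds\,dy\right|,
\end{align*}
the first term vanishes by the uniform integrability of $\{\varphi^n_k\}$ (coming from the cost bound \eqref{eqn:controlBoundCompact} and the super-linearity of $\ell$ as in the proof of Lemma \ref{lem:convergence}), and the second vanishes by the definition of the $S_{M+1}$ topology. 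The remaining term involving $\lambda \int \varphi^n_0$ is handled identically, yielding the claimed identity.

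For part (c), I would verify properties (i)--(iv) in the definition of $\Cmc_T$. The initial condition $\zeta_i(0)=\psi_i(0)=x_i$ is preserved by uniform convergence. Absolute continuity of $\psi_i$ follows from the integral representation established in part (b) together with the bound $\int_{[0,T]\times[0,1]}\varphi_i(s,y)\,ds\,dy<\iy$ (from Lemma \ref{lem:ellProp}(b) and part (a)); absolute continuity of $\zeta_i$ then follows from the Lipschitz property of the Skorokhod map (Lemma \ref{lem:SMap}) applied componentwise, together with Remark \ref{rem:remskor}. The monotonicity $1=\zeta_0\ge \zeta_1\ge\cdots\ge 0$ passes to the limit directly. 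For the summability condition (iii), I would use Fatou's lemma together with the same kind of estimate used in the proof of Lemma \ref{lem:convergence}, exploiting the uniform cost bound \eqref{eqn:controlBoundCompact}. Finally, because $(\zeta^n,\eta^n)$ solves the Skorokhod problem for $\psi^n$, and because the Skorokhod map $\bar\Gamma$ is Lipschitz (Lemma \ref{lem:SMap}), the reflection $\eta^n$ converges uniformly to some $\eta$ with $(\zeta,\eta)$ solving the SP for $\psi$; the minimality condition $\int_0^T \one_{\{\zeta_i(s)<1\}}\,\eta_i(ds)=0$ is inherited by the limit via the same argument as in \eqref{eqn:conv4}--\eqref{eqn:conv5} of Lemma \ref{lem:convergence}. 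The conclusion $\varphi\in\Smc_T(\zeta,\psi)$ is then immediate from part (b). I anticipate that the most delicate step is the uniform integrability verification needed in part (b), but this is a routine consequence of super-linearity of $\ell$ once the cost bound is in hand.
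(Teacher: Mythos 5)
Your proposal is correct and follows essentially the same route as the paper: part (a) via the lower semicontinuity of the cost functional from \cite[Lemma A.1]{budhiraja2013large} together with \eqref{eqn:controlBoundCompact}, part (b) by the identical decomposition of the integral difference and the a.e.\ convergence of the indicators combined with uniform integrability of $\varphi^n_k$, and part (c) by verifying properties (i)--(iv) of $\Cmc_T$ using part (b), the Lipschitz property of the Skorokhod map, Remark \ref{rem:remskor}, and Fatou's lemma. The only minor difference is that the paper observes $\zeta=\Gamma(\psi)$ and $\eta=\bar\Gamma(\psi)$ directly from Lipschitz continuity of the Skorokhod map, so the minimality condition in property (iv) is automatic and the extra appeal you make to the measure-convergence argument of \eqref{eqn:conv4}--\eqref{eqn:conv5} is redundant (though not incorrect).
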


\begin{proof}
	Part $(a)$ is immediate from  \cite[Lemma A.1]{budhiraja2013large}, \eqref{eqn:controlBoundCompact}, and Fatou's lemma.

	We now prove $(b)$. The convergence  
	\begin{equation}\int_{[0,t]\times[0,1]} \varphi^n_0(s,y)\,ds\,dy \to
	\int_{[0,t]\times[0,1]} \varphi_0(s,y)\,ds\,dy
	\label{eq:eq510}
	\end{equation}
	 is immediate from the definition of the topology on $S_{M+1}$ (see the comment above Lemma \ref{lem:tightness}).
	Consider now the integral on the right side of \eqref{eq:psii}. For each  $k \in \NN$,
	\begin{align*}
		&\left|\int_{[0,t]\times[0,1]}\one_{[0,\zeta^n_k(s)-\zeta^n_{k+1}(s))}(y)\varphi^n_k(s,y)\,ds\,dy-\int_{[0,t]\times[0,1]}\one_{[0,\zeta_k(s)-\zeta_{k+1}(s))}(y)\varphi_k(s,y)\,ds\,dy\right|\\
		&\qquad \leq \int_{[0,t]\times[0,1]}|\one_{[0,\zeta^n_k(s)-\zeta^n_{k+1}(s))}(y)-\one_{[0,\zeta_k(s)-\zeta_{k+1}(s))}(y)|\varphi^n_k(s,y)\,ds\,dy\\
		&\qquad\qquad + \left| \int_{[0,t]\times[0,1]}\one_{[0,\zeta_k(s)-\zeta_{k+1}(s))}(y)(\varphi^n_k(s,y)-\varphi_k(s,y))\,ds\,dy \right|.
	\end{align*}
	Using the convergence of
	\begin{align*}
		|\one_{[0,\zeta^n_k(s)-\zeta^n_{k+1}(s))}(y)-\one_{[0,\zeta_k(s)-\zeta_{k+1}(s))}(y)|\to 0
	\end{align*}
	for $\leb_t$-a.e. $(s,y)\in[0,t]\times[0,1]$ and the uniform integrability of
$(s,y)\mapsto \varphi^n_k(s,y)$ with respect to the normalized Lebesgue measure on $[0,t]\times [0,1]$,
	guaranteed by \eqref{eqn:controlBoundCompact} and the superlinearity of $\ell$, we have
	\begin{equation*}
		\int_{[0,t]\times[0,1]}|\one_{[0,\zeta^n_k(s)-\zeta^n_{k+1}(s))}(y)-\one_{[0,\zeta_k(s)-\zeta_{k+1}(s))}(y)|\varphi^n_k(s,y)\,ds\,dy\to 0.
	\end{equation*}
	From the convergence of $\varphi^n \to \varphi$ and recalling the topology on $S_{M+1}$, we have that
	\begin{equation*}
		\int_{[0,t]\times[0,1]}\one_{[0,\zeta_k(s)-\zeta_{k+1}(s))}(y)(\varphi^n_k(s,y)-\varphi_k(s,y))\,ds\,dy
		\to 0.
	\end{equation*}
	This gives the convergence:
	\begin{equation*}
		 \int_{[0,t]\times[0,1]}\one_{[0,\zeta^n_k(s)-\zeta^n_{k+1}(s))}(y)\varphi^n_k(s,y)\,ds\,dy \to
	\int_{[0,t]\times[0,1]}\one_{[0,\zeta_k(s)-\zeta_{k+1}(s))}(y)\varphi_k(s,y)\,ds\,dy.\end{equation*}
	Combining this with \eqref{eq:eq510} gives $(b)$.
	
	Finally consider part $(c)$. The fact that $(\zeta, \psi)$ satisfies property (ii) and  (iv) of $\clc_T$ is an immediate consequence of the
	  fact that $(\zeta^n,\psi^n)$ satisfy these properties and the Lipschitz property of the Skorokhod map proved in Lemma \ref{lem:SMap}.
	  Property (i) follows from this and Remark \ref{rem:remskor}. For property (iii) note that 
  	\begin{align*}
  		 \sup_{0\leq t\leq T}\sum_{i=1}^\iy\zeta_i(t)
  		& \le \liminf_{n\to\iy}  \sup_{0\leq t\leq T} \sum_{i=1}^\iy \zeta^n_i(t) \\
  		& \leq  \sum_{i=1}^\iy x_i + \liminf_{n\to\iy} \int_{[0,T]\times[0,1]} \lambda \varphi^n_0(s,y) \,ds\,dy
  		< \iy,
  	\end{align*}
where the last inequality is from \eqref{eqn:controlBoundCompact} and Remark \ref{rmk:summable}. This completes the proof that $(\zeta, \psi) \in \clc_T$. The fact that
$\varphi \in \cls_T(\zeta, \psi)$ is now immediate from part (b).
\end{proof}

We now return to the proof of compactness of $\Gamma_M$. Consider a sequence $\{(\zeta^n,\psi^n)\} \subset \Gamma_M$. Then Lemma \ref{lem:tightCompact} shows that such a sequence is precompact and Lemma \ref{lem:convergencCompact} shows that any limit point
$(\zeta,\psi)$ of $(\zeta^n,\psi^n)$ is in $\Gamma_M$. This establishes the desired compactness.

\section{Bounds on Probabilities of Long Queues}
\label{sec:examples}
In this section we prove Theorem \ref{thm:largeqs}. Fix $j \ge 3$ and recall the notation $G_j, F_j$ from Section \ref{sec:mainResult}. From the LDP in Theorem \ref{thm:mainResult} and since $G_j \subset F_j$ 
\begin{align*}
	\liminf_{n\to\iy}\frac{1}{n}\log(\PP(X^n\in G_j))&\geq- I_T(G_j),\\
	\limsup_{n\to\iy}\frac{1}{n}\log(\PP(X^n\in G_j))&\leq
	\limsup_{n\to\iy}\frac{1}{n}\log(\PP(X^n\in {F}_j))\leq- I_T(F_j).
\end{align*}
In order to prove the first statement in the theorem we first solve for $I_T(F_j)$ and then show that $I_T({F}_j)=I_T(G_j)$.

Fix $\varepsilon > 0$, $(\zeta,\psi)\in \Fbar_j$, and $\varphi \in \Smc_T(\zeta,\psi)$ with
\begin{equation}\label{eqn:contCostBnd}
	\sum_{k=0}^\iy\int_{[0,T]\times[0,1]}\ell(\varphi_k(s,y))\,ds\,dy \le I_T(\zeta,\psi) + \varepsilon \le I_T(\Fbar_j) + 2\varepsilon.
\end{equation}
Define
\begin{align*}
	\tau_i\doteq\inf_{t\in[0,T]}\{\zeta_{i}(t)=1\}, \quad i \in \Nmb.
\end{align*}
Since all queues are of length one at time $0$, we have that $0 = \tau_1 \le \tau_2 \le \dotsb$ and $\zeta_k(\tau_i) = \one_{\{k \le i\}}$.
%
We can assume without loss of generality that $\tau_{j-1}=T$.
To see this, note that if $\tau_{j-1}<T$, then we can consider the delayed trajectory $(\zetabar,\psibar)$ defined by
\begin{align*}
	(\zetabar(t),\psibar(t)) & =(\zeta(0),\psi(0)), \quad 0 \le t \le T-\tau_{j-1}, \\
	(\zetabar(t),\psibar(t)) & =(\zeta(t-(T-\tau_{j-1}),\psi(t-(T-\tau_{j-1})), \quad T-\tau_{j-1} < t \le T.
\end{align*}
Since the cost over time $[0,T-\tau_{j-1}]$ is zero, we have $I_T(\zetabar,\psibar) \le I_T(\zeta,\psi)$.
Thus henceforth we assume $\tau_{j-1}=T$.

We can further assume without loss of generality that $\zeta_k(t)$ is non-decreasing in $t$ for each $k \in \Nmb$.
To see this, consider the new trajectory $\zetabar$ defined by
\begin{align*}
	\zetabar_k(t) & = \max_{0 \le u \le t} \zeta_k(u), k \in \NN.
\end{align*}
Note that this says that for each $i$ and $ t\in (\tau_i,\tau_{i+1})$,
\begin{align*}
	\zetabar_k(t) & = 1, \quad k \le i; \;\; 
	\zetabar_{i+1}(t)  = \max_{0 \le u \le t} \zeta_{i+1}(u); \;\; 
	\zetabar_k(t)  = 0, \quad k \ge i+2.
\end{align*}

We claim that $\zetabar_{i+1}$ is absolutely continuous and $\zetabar_{i+1}'(t)=\zeta_{i+1}'(t)\one_{\{\zeta_{i+1}(t) = \max_{0 \le u \le t} \zeta_{i+1}(u)\}}$ a.e.\ $t \in (\tau_i,\tau_{i+1})$, for ever $i$. 
Absolute continuity is immediate on noting that $0 \le \zetabar_{i+1}(t_2)-\zetabar_{i+1}(t_1) \le \max_{t_1 \le s \le t_2} \zeta_{i+1}(s) - \zeta_{i+1}(t_1)$ for $\tau_i \le t_1 \le t_2 \le \tau_{i+1}$, for every $i$.
Note also that for a.e.\ $t \in (\tau_i,\tau_{i+1})$ such that $\zeta_{i+1}'(t)$ and $\zetabar_{i+1}'(t)$ exist, if $\zeta_{i+1}(t) < \max_{0 \le u \le t} \zeta_{i+1}(u)$, then $\zetabar_{i+1}'(t)=0$. 
On the other hand, if $\zeta_{i+1}(t) = \max_{0 \le u \le t} \zeta_{i+1}(u)$, we have two possible cases:
\begin{itemize}
\item Case 1: there exists a sequence $t_n \downarrow t$ (i.e.\ $t_n$ approaches $t$ from above) with $\zeta_{i+1}(t_n) = \max_{0 \le u \le t_n} \zeta_{i+1}(u)$.
In this case $$\zetabar_{i+1}'(t) = \lim_{n \to \infty} \frac{\zetabar_{i+1}(t_n)-\zetabar_{i+1}(t)}{t_n-t} = \lim_{n \to \infty} \frac{\zeta_{i+1}(t_n)-\zeta_{i+1}(t)}{t_n-t} = \zeta_{i+1}'(t).$$
\item Case 2: A sequence $t_n$ as in Case 1  does not exist, namely there exists some $t_0 \in (t,\tau_{i+1})$ such that $\zeta_{i+1}(s) < \max_{0 \le u \le s} \zeta_{i+1}(u)$ for all $s \in (t,t_0)$.
Then we must have $\zetabar_{i+1}(s) = \zetabar_{i+1}(t)$ for all $s \in (t,t_0)$, and hence $\zetabar_{i+1}'(t)=0$.
From this and $\zeta_{i+1}(t) = \max_{0 \le u \le t} \zeta_{i+1}(u)$ we have $\zeta_{i+1}(s) \le \zeta_{i+1}(t)$ for all $s \in (0,t_0)$.
Therefore $\zeta_{i+1}'(t)=0=\zetabar_{i+1}'(t).$
\end{itemize}
This proves the claim.

Define the control $\bar \varphi$ over the interval  $(\tau_i,\tau_{i+1})$ as
\begin{align*}
	\varphibar_k(t,y) & = \varphi_k(t,y) \one_{\{\zeta_{i+1}(t) = \max_{0 \le u \le t} \zeta_{i+1}(u)\}} + \one_{\{\zeta_{i+1}(t) < \max_{0 \le u \le t} \zeta_{i+1}(u)\}}, k \in \NN_0
\end{align*}
and define the corresponding $\psibar$  by
\eqref{eq:psi1} and \eqref{eq:psii} using $\zetabar$ and $\varphibar$.
Now we show that $(\zetabar,\psibar) \in \Cmc_T$.
For this it suffices to verify property (iv) of $\Cmc_T$.
Let, for $t \in (\tau_i,\tau_{i+1})$
\begin{align*}
	\etabar_k'(t) & = \eta_k'(t) \one_{\{\zeta_{i+1}(t) = \max_{0 \le u \le t} \zeta_{i+1}(u)\}} + \one_{\{\zeta_{i+1}(t) < \max_{0 \le u \le t} \zeta_{i+1}(u)\}}, \quad k < i, \\
	\etabar_i'(t) & = \eta_i'(t) \one_{\{\zeta_{i+1}(t) = \max_{0 \le u \le t} \zeta_{i+1}(u)\}} + (1+\psibar_i'(t)) \one_{\{\zeta_{i+1}(t) < \max_{0 \le u \le t} \zeta_{i+1}(u)\}}, \\
	\etabar_k'(t) & = \eta_k'(t) = 0, \quad k \ge i+1.
\end{align*}
Note that for a.e.\ $t$, if $\zeta_{i+1}(t) = \max_{0 \le u \le t} \zeta_{i+1}(u)$, then $(\zetabar',\psibar',\etabar')=(\zeta',\psi',\eta')$ and hence
\begin{equation*}
	\zetabar_k'(t) = \psibar_k'(t) +\etabar_{k-1}'(t) - \etabar_k'(t), \quad \one_{\{\zetabar_k(t)<1\}}\etabar_k'(t)=0.
\end{equation*} 
If $\zeta_{i+1}(t) < \max_{0 \le u \le t} \zeta_{i+1}(u)$, using the definition of $(\zetabar,\psibar,\etabar,\varphibar)$ it can be verified that the above equation still holds. In particular to check the equation for $k=i+1$ we use the facts $\psibar_i'(t)+\psibar_{i+1}'(t)=-1$ and $\zetabar_{i+1}'(t)=\zeta_{i+1}'(t)\one_{\{\zeta_{i+1}(t) = \max_{0 \le u \le t} \zeta_{i+1}(u)\}}=0$.
Therefore \eqref{eq:zeta_psi_eta} holds for $(\zetabar,\psibar,\etabar)$.
Thus we have that $(\zetabar,\psibar) \in \Cmc_T$, $\varphibar \in \Smc_T(\zetabar,\psibar)$, and
\begin{equation*}
	I_T(\zetabar,\psibar) \le \sum_{k=0}^\iy\int_{[0,T]\times[0,1]}\ell(\varphibar_k(s,y))\,ds\,dy \le \sum_{k=0}^\iy\int_{[0,T]\times[0,1]}\ell(\varphi_k(s,y))\,ds\,dy.
\end{equation*}
We have therefore shown that one  can assume without loss of generality that $\zeta_k(t)$ is non-decreasing in $t$ for each $k \in \Nmb$.
Henceforth we will assume that this holds.
Note that, in particular this says that  $\zeta_1(t)=1$ for all $t \in [0,T]$.

From \eqref{eq:zeta_psi_eta} we have
\begin{align*}
	\sum_{k=1}^\infty \zeta_k(t) = \sum_{k=1}^\infty \psi_k(t).
\end{align*}
From the above display, \eqref{eq:psi1}, \eqref{eq:psii}, and the assumption that $\pi(t)\doteq\max\{k:\zeta_k(t)=1\} \le j-1$ we have
\begin{align*}
	1 & = \sum_{k=1}^\infty (\zeta_k(\tau_{i+1}) - \zeta_k(\tau_i)) 
	 = \sum_{k=1}^{j-1} (\psi_k(\tau_{i+1}) - \psi_k(\tau_i)) \\
	& = \int_{[\tau_i,\tau_{i+1}]\times[0,1]} \varphi_0(s,y) \,ds\,dy - \sum_{k=1}^{j-1} \int_{[\tau_i,\tau_{i+1}]\times[0,1]} \one_{[0,\zeta_k(s)-\zeta_{k+1}(s))}(y) \varphi_k(s,y) \,ds\,dy.
\end{align*}
Let $\theta_i \doteq (\tau_{i+1} - \tau_i)^{-1}$. 
Note that, since $\zeta_1(t)=1$ for every $t$, 
$$\theta_i \sum_{k=1}^{j-1} \int_{[\tau_i,\tau_{i+1}]\times[0,1]} \one_{[0,\zeta_k(s)-\zeta_{k+1}(s))}(y)  \,ds\,dy = 1.$$
It then follows from Jensen's inequality and the convexity of $\ell$ that
\begin{align}\label{eq:boundBelow}
\begin{split}
& \sum_{k=0}^\infty \int_{[\tau_i,\tau_{i+1}]\times[0,1]} \ell(\varphi_k(s,y)) \,ds\,dy \\
& \quad\ge \int_{[\tau_i,\tau_{i+1}]\times[0,1]} \ell(\varphi_0(s,y)) \,ds\,dy + \sum_{k=1}^{j-1} \int_{[\tau_i,\tau_{i+1}]\times[0,1]} \one_{[0,\zeta_k(s)-\zeta_{k+1}(s))}(y) \ell(\varphi_k(s,y)) \,ds\,dy \\
& \quad\ge \theta_i^{-1} \ell\left( \theta_i\int_{[\tau_i,\tau_{i+1}]\times[0,1]} \varphi_0(s,y) \,ds\,dy \right) \\
& \qquad + \theta_i^{-1} \ell\left( \theta_i \sum_{k=1}^{j-1} \int_{[\tau_i,\tau_{i+1}]\times[0,1]} \one_{[0,\zeta_k(s)-\zeta_{k+1}(s))}(y) \varphi_k(s,y) \,ds\,dy \right).
\end{split}
\end{align}
This quantity can be further bounded from below by 
\begin{equation*}
	\frac{1}{\theta_i}\inf \left\{\ell(a)+\ell(b) : a, b \ge 0, a-b=c>0\right\}
\end{equation*}
where $c=\theta_i$.
Using Lagrange multipliers  one finds that the  the above infimum is achieved at
\begin{equation*}
	a = \frac{c+\sqrt{c^2+4}}{2}, \quad b = \frac{-c+\sqrt{c^2+4}}{2}.
\end{equation*}
Plugging this back into \eqref{eq:boundBelow} gives
\begin{align*}
\sum_{k=0}^\infty \int_{[\tau_i,\tau_{i+1}]\times[0,1]} \ell(\varphi_k(s,y)) \,ds\,dy
\ge \theta_i^{-1} \ell\left( \frac{\theta_i+\sqrt{\theta_i^2+4}}{2} \right) + \theta_i^{-1} \ell\left( \frac{-\theta_i+\sqrt{\theta_i^2+4}}{2} \right).
\end{align*}
From this, Jensen's inequality, the convexity of $\ell$, and the fact that $\tau_1=0$, $\tau_{j-1}=T$, $\sum_{i=1}^{j-2}\theta_i^{-1}=T$, we have, letting
$a_j = (j-2)/T$,
\begin{align*}
	& \sum_{k=0}^\infty \int_{[0,T]\times[0,1]} \ell(\varphi_k(s,y)) \,ds\,dy \notag \\
	& \ge \sum_{i=1}^{j-2} \left[ \theta_i^{-1} \ell\left( \frac{\theta_i+\sqrt{\theta_i^2+4}}{2} \right) + \theta_i^{-1} \ell\left( \frac{-\theta_i+\sqrt{\theta_i^2+4}}{2} \right) \right] \notag \\
	& \ge T\ell\left( \frac{(j-2)+\sum_{i=1}^{j-2}\sqrt{1+4\theta_i^{-2}}}{2T} \right) + T\ell\left( \frac{-(j-2)+\sum_{i=1}^{j-2}\sqrt{1+4\theta_i^{-2}}}{2T} \right) \notag \\
	& \ge T\ell\left( \frac{(j-2)+(j-2)\sqrt{1+4(a_j)^{-2}}}{2T} \right) + T\ell\left( \frac{-(j-2)+(j-2)\sqrt{1+4(a_j)^{-2}}}{2T} \right) \notag \\
 	& = T\ell\left( \frac{a_j+\sqrt{4+(a_j)^{2}}}{2} \right) + T\ell\left(\frac{-a_j+\sqrt{4+(a_j)^{2}}}{2} \right),
\end{align*}
where the last inequality is obtained by the convexity of the function $f(x) \doteq \sqrt{1+4x^2}$ and monotonicity of the functions
$x \mapsto \ell(x+c) + \ell(x-c)$ for $c \ge 0$ and $x >c$.
As $\varepsilon>0$ in \eqref{eqn:contCostBnd} is arbitrary, this shows
$$I_T(\Fbar_j) \ge T\ell\left( \frac{a_j+\sqrt{4+(a_j)^{2}}}{2} \right) + T\ell\left(\frac{-a_j+\sqrt{4+(a_j)^{2}}}{2} \right).$$
On the other hand, note that the above lower bound can  be achieved by a $(\zeta,\psi) \in I_T(\Fbar_j)$ given for $k \in \Nmb$, $t \in [0,T]$, and $y \in [0,1]$ as 
\begin{align}\label{eq:optPath}
\begin{split}
	\zeta_k(t) & = 0 \vee \left( a_jt - (k-2) \right) \wedge 1, \\
	\varphi_0(t,y) & = \frac{a_j+\sqrt{4+(a_j)^2}}{2},\\
	\varphi_k(t,y) & = \frac{-a_j+\sqrt{4+(a_j)^2}}{2} \one_{[0,\zeta_k(t)-\zeta_{k+1}(t))}(y) + \one_{[\zeta_k(t)-\zeta_{k+1}(t),1]}(y), \\
	\psi_k(t) & = \one_{\{k=1\}} + \one_{\{k=1\}} \int_{[0,t]\times[0,1]}\varphi_0(s,y)\,ds\,dy  - \int_{[0,t]\times[0,1]}\one_{[0,\zeta_k(s)-\zeta_{k+1}(s))}(y) \varphi_k(s,y) \,ds\,dy.
\end{split}
\end{align}
Therefore
\begin{equation}
	\label{eq:cost_Fbarj}
	I_T(\Fbar_j)=T\ell\left( \frac{a_j+\sqrt{4+(a_j)^2}}{2} \right) + T\ell\left( \frac{-a_j+\sqrt{4+(a_j)^2}}{2} \right).
\end{equation}

Next we show that  $I_T(F_j)= I_T(G_j)$.
Since $G_j\subset F_j$, we clearly have $I_T(F_j)\leq I_T(G_j)$. 

Now we show the reverse inequality.
Fix $\veps>0$.
Consider the  modification of the trajectory \eqref{eq:optPath}, defined over $[0,T-\veps]$, by replacing $T$ throughout by $T-\veps$.
Then $\zeta_k(T-\varepsilon)=\one_{\{k \le j-1\}}$ and the cost of this trajectory over the interval $[0,T-\veps]$ is given by \eqref{eq:cost_Fbarj} with $T$ replaced by $T-\varepsilon$.
Define, for $t \in [T-\varepsilon,T]$ and $y \in [0,1]$,
\begin{align*} 
	\zeta_k(t) &\doteq \zeta_k(T-\veps) \one_{\{k\ne j\}} + (t-(T-\varepsilon))\one_{\{k=j\}}, \\
	 \varphi_k(t,y)&\doteq\one_{\{k=0 \mbox{ or } k\ge j+1\}},
\end{align*}
so that $\zeta_j(T) > 0$ and
\begin{equation*}
	\sum_{k=0}^\infty \int_{[T-\varepsilon,T]\times[0,1]} \ell(\varphi_k(s,y)) \,ds\,dy = j\ell(0)\varepsilon.
\end{equation*}
This trajectory clearly lies in $G_j$ and its cost (over $[0,T]$) converges to \eqref{eq:cost_Fbarj} as $\varepsilon \to 0$.
This implies $I_T(G_j) \le I_T(F_j)$ proving the reverse inequality and hence showing that $I_T(F_j)= I_T(G_j)$. This completes the proof of the
first part of the theorem.

Finally we consider the second part of the theorem.
Since $\sqrt{4+x^2}=2+o(x)$ and $\ell(x)=\frac{(x-1)^2}{2} + o((x-1)^2)$, as $x \to 0$ and $x\to 1$, respectively, we have
\begin{equation*}
	- T\ell\left( \frac{a_j+\sqrt{4+(a_j)^2}}{2} \right) - T\ell\left( \frac{-a_j+\sqrt{4+(a_j)^2}}{2} \right) = -T \left( \frac{(a_j)^2}{4} + o((a_j)^2) \right) 
\end{equation*}
as $T \to \infty$.
Sending $T\to \infty$, we have
\begin{equation*}
	\pushQED{\qed} \lim_{T \to \infty}\lim_{n\to\iy}\frac{T}{n}\log(\PP(X^n\in G_j)) = \lim_{T \to \infty}\lim_{n\to\iy}\frac{T}{n}\log(\PP(X^n\in F_j)) = -\frac{(j-2)^2}{4}. \qedhere \popQED
\end{equation*}

\bibliographystyle{plain}
\bibliography{references}

\begin{thebibliography}{10}

\bibitem{agazzi2018}
Andrea Agazzi, Amir Dembo, and Jean-Pierre Eckmann.
\newblock Large deviations theory for {M}arkov jump models of chemical reaction
  networks.
\newblock {\em Ann. Appl. Probab.}, 28(3):1821--1855, 06 2018.

\bibitem{alahaj}
Murat Alanyali and Bruce Hajek.
\newblock On large deviations of markov processes with discontinuous
  statistics.
\newblock {\em The Annals of Applied Probability}, 8(1):45--66, 1998.

\bibitem{atadup1}
Rami Atar and Paul Dupuis.
\newblock Large deviations and queueing networks: {M}ethods for rate function
  identification.
\newblock {\em Stochastic processes and their applications}, 84(2):255--296,
  1999.

\bibitem{banerjee2018join}
Sayan Banerjee and Debankur Mukherjee.
\newblock Join-the-shortest queue diffusion limit in {H}alfin-{W}hitt regime:
  {T}ail asymptotics and scaling of extrema.
\newblock {\em Ann. Appl. Probab.}, 29(2):1262--1309, 04 2019.

\bibitem{BhamidiBudhirajaDupuisWu2019rare}
Shankar Bhamidi, Amarjit Budhiraja, Paul Dupuis, and Ruoyu Wu.
\newblock Rare event asymptotics for exploration processes for random graphs.
\newblock {\em arXiv preprint arXiv:1912.04714}, 2019.

\bibitem{BillingsleyConv}
Patrick Billingsley.
\newblock {\em Convergence of {P}robability {M}easures}.
\newblock Wiley Series in Probability and Statistics: Probability and
  Statistics. John Wiley \& Sons Inc., New York, second edition, 1999.
\newblock A Wiley-Interscience Publication.

\bibitem{bramson2012asymptotic}
M~Bramson, Y~Lu, and B~Prabhakar.
\newblock Asymptotic independence of queues under randomized load balancing.
\newblock {\em Queueing Systems}, 71(3):247--292, 2012.

\bibitem{braverman2018steady}
Anton Braverman.
\newblock Steady-state analysis of the join the shortest queue model in the
  {H}alfin-{W}hitt regime.
\newblock {\em arXiv preprint arXiv:1801.05121}, 2018.

\bibitem{budhiraja2013large}
Amarjit Budhiraja, Jiang Chen, and Paul Dupuis.
\newblock Large deviations for stochastic partial differential equations driven
  by a {P}oisson random measure.
\newblock {\em Stochastic Processes and their Applications}, 123(2):523--560,
  2013.

\bibitem{BudhirajaDupuis2019analysis}
Amarjit Budhiraja and Paul Dupuis.
\newblock {\em Analysis and Approximation of Rare Events: Representations and
  Weak Convergence Methods}, volume~94.
\newblock Springer US, 2019.

\bibitem{budhiraja2016moderate}
Amarjit Budhiraja, Paul Dupuis, and Arnab Ganguly.
\newblock Moderate deviation principles for stochastic differential equations
  with jumps.
\newblock {\em The Annals of Probability}, 44(3):1723--1775, 2016.

\bibitem{budhiraja2011variational}
Amarjit Budhiraja, Paul Dupuis, and Vasileios Maroulas.
\newblock Variational representations for continuous time processes.
\newblock In {\em Annales de l'Institut Henri Poincar{\'e}, Probabilit{\'e}s et
  Statistiques}, volume~47, pages 725--747. Institut Henri Poincar{\'e}, 2011.

\bibitem{budhiraja2017diffusion}
Amarjit Budhiraja and Eric Friedlander.
\newblock Diffusion approximations for load balancing mechanisms in cloud
  storage systems.
\newblock {\em Advances in Applied Probability}, 51(1):41--86, 2019.

\bibitem{budwuptrf}
Amarjit Budhiraja and Ruoyu Wu.
\newblock Moderate deviation principles for weakly interacting particle
  systems.
\newblock {\em Probability Theory and Related Fields}, 168(3-4):721--771, 2017.

\bibitem{dupell3}
Paul Dupuis and Richard~S Ellis.
\newblock Large deviations for markov processes with discontinuous statistics,
  {I}{I}: {R}andom walks.
\newblock {\em Probability Theory and Related Fields}, 91(2):153--194, 1992.

\bibitem{dupell2}
Paul Dupuis and Richard~S Ellis.
\newblock The large deviation principle for a general class of queueing
  systems. {I}.
\newblock {\em Transactions of the American Mathematical Society},
  347(8):2689--2751, 1995.

\bibitem{dupuis2011weak}
Paul Dupuis and Richard~S Ellis.
\newblock {\em A {W}eak {C}onvergence {A}pproach to the {T}heory of {L}arge
  {D}eviations}, volume 902.
\newblock John Wiley \& Sons, 2011.

\bibitem{dupell1}
Paul Dupuis, Richard~S Ellis, and Alan Weiss.
\newblock Large deviations for {M}arkov processes with discontinuous
  statistics, {I}: {G}eneral upper bounds.
\newblock {\em The Annals of Probability}, pages 1280--1297, 1991.

\bibitem{DupuisIshii1991lipschitz}
Paul Dupuis and Hitoshi Ishii.
\newblock On {L}ipschitz continuity of the solution mapping to the {S}korokhod
  problem, with applications.
\newblock {\em Stochastics and Stochastic Reports}, 35(1):31--62, 1991.

\bibitem{dupramwu}
Paul Dupuis, Kavita Ramanan, and Wei Wu.
\newblock Large deviation principle for finite-state mean field interacting
  particle systems.
\newblock {\em arXiv preprint arXiv:1601.06219}, 2016.

\bibitem{eschenfeldt2018join}
Patrick Eschenfeldt and David Gamarnik.
\newblock Join the shortest queue with many servers. {T}he heavy-traffic
  asymptotics.
\newblock {\em Mathematics of Operations Research}, 43(3):867--886, 2018.

\bibitem{graham2000chaoticity}
C~Graham.
\newblock Chaoticity on path space for a queueing network with selection of the
  shortest queue among several.
\newblock {\em Journal of Applied Probability}, 37(1):198--211, 2000.

\bibitem{gupta2017load}
Varun Gupta and Neil Walton.
\newblock Load balancing in the nondegenerate slowdown regime.
\newblock {\em Operations Research}, 67(1):281--294, 2019.

\bibitem{halwhi1}
Shlomo Halfin and Ward Whitt.
\newblock Heavy-traffic limits for queues with many exponential servers.
\newblock {\em Operations research}, 29(3):567--588, 1981.

\bibitem{HarrisonReiman1981reflected}
J.~Michael Harrison and Martin~I. Reiman.
\newblock Reflected {B}rownian motion on an orthant.
\newblock {\em Ann. Probab.}, 9(2):302--308, 04 1981.

\bibitem{ignatiouk-robert2000}
Irina Ignatiouk-Robert.
\newblock Large deviations of {J}ackson networks.
\newblock {\em Ann. Appl. Probab.}, 10(3):962--1001, 08 2000.

\bibitem{ign2}
Irina Ignatiouk-Robert.
\newblock Large deviations for processes with discontinuous statistics.
\newblock {\em The Annals of Probability}, 33(4):1479--1508, 2005.

\bibitem{IkedaWatanabe}
Nobuyuki Ikeda and Shinzo Watanabe.
\newblock {\em Stochastic {D}ifferential {E}quations and {D}iffusion
  {P}rocesses}, volume~24 of {\em North-Holland Mathematical Library}.
\newblock North-Holland Publishing Co., Amsterdam, second edition, 1989.

\bibitem{joffe1986weak}
A~Joffe and M~M{\'e}tivier.
\newblock Weak convergence of sequences of semimartingales with applications to
  multitype branching processes.
\newblock {\em Advances in Applied Probability}, pages 20--65, 1986.

\bibitem{leo1}
Christian L{\'e}onard.
\newblock Large deviations for long range interacting particle systems with
  jumps.
\newblock In {\em Annales de l'IHP Probabilit{\'e}s et statistiques},
  volume~31, pages 289--323, 1995.

\bibitem{mitzenmacher2001power}
M~Mitzenmacher.
\newblock The power of two choices in randomized load balancing.
\newblock {\em IEEE Transactions on Parallel and Distributed Systems},
  12(10):1094--1104, 2001.

\bibitem{mukherjee2015universality}
Debankur Mukherjee, Sem~C Borst, Johan~SH Van~Leeuwaarden, and Philip~A
  Whiting.
\newblock Universality of load balancing schemes on the diffusion scale.
\newblock {\em Journal of Applied Probability}, 53(4):1111--1124, 2016.

\bibitem{mukboretal}
Debankur Mukherjee, Sem~C Borst, Johan~SH Van~Leeuwaarden, and Philip~A
  Whiting.
\newblock Universality of power-of-d load balancing in many-server systems.
\newblock {\em Stochastic Systems}, 8(4):265--292, 2018.

\bibitem{puhvla}
Anatolii~A Puhalskii and Alexander~A Vladimirov.
\newblock A large deviation principle for join the shortest queue.
\newblock {\em Mathematics of Operations Research}, 32(3):700--710, 2007.

\bibitem{ridshw}
Ad~Ridder and Adam Shwartz.
\newblock Large deviations without principle: Join the shortest queue.
\newblock {\em Mathematical Methods of Operations Research}, 62(3):467--483,
  2005.

\bibitem{stolyar2015pull}
AL~Stolyar.
\newblock Pull-based load distribution in large-scale heterogeneous service
  systems.
\newblock {\em Queueing Systems}, 80(4):341--361, 2015.

\bibitem{tib1}
Danielle Tibi.
\newblock Metastability in communication networks.
\newblock {\em arXiv preprint arXiv:1002.0796}, 2010.

\bibitem{van2018scalable}
Mark van~der Boor, Sem~C Borst, Johan~SH van Leeuwaarden, and Debankur
  Mukherjee.
\newblock Scalable load balancing in networked systems: A survey of recent
  advances.
\newblock {\em arXiv preprint arXiv:1806.05444}, 2018.

\bibitem{vvedenskaya1996queueing}
ND~Vvedenskaya, RL~Dobrushin, and FI~Karpelevich.
\newblock Queueing system with selection of the shortest of two queues: {A}n
  asymptotic approach.
\newblock {\em Problemy Peredachi Informatsii}, 32(1):20--34, 1996.

\end{thebibliography}
{\sc 
\footnotesize
\bigskip
\noindent
\begin{minipage}{1\linewidth}
	A. Budhiraja\\
Department of Statistics and Operations Research\\
University of North Carolina\\
Chapel Hill, NC 27599, USA\\
email: budhiraj@email.unc.edu
\end{minipage}\\
}

{\sc 
\footnotesize
\bigskip
\noindent
\begin{minipage}{0.5\linewidth}
E. Friedlander\\
Department of Ecology and Evolution\\
University of Chicago\\
Chicago, IL 60637, USA\\
email: efriedlander@uchicago.edu
\end{minipage}
\hfill
\begin{minipage}{0.4\linewidth}
R. Wu\\
Department of Mathematics\\
University of Michigan\\
Ann Arbor, MI 48109, USA\\
email: ruoyu@umich.edu
\end{minipage}
}

\end{document}